
\documentclass[review,onefignum,onetabnum]{siamart190516}
\nolinenumbers


\usepackage{lipsum}
\usepackage{amsfonts,amsmath, amssymb}
\usepackage{graphicx}
\usepackage[caption=false,farskip=0pt]{subfig}
\usepackage{epstopdf}
\usepackage{algorithmic,arydshln}
\ifpdf
  \DeclareGraphicsExtensions{.eps,.pdf,.png,.jpg}
\else
  \DeclareGraphicsExtensions{.eps}
\fi


\newsiamremark{remark}{Remark}
\newsiamremark{case}{Case}
\newsiamremark{hypothesis}{Hypothesis}
\crefname{hypothesis}{Hypothesis}{Hypotheses}
\newsiamthm{claim}{Claim}

\headers{Root-exponential convergence of LP approximation}{Shuhuang Xiang and Shunfeng Yang}

\title{The root-exponential convergence of lightning plus polynomial approximation on corner domains\thanks{Submitted to the editors DATE.
\funding{This work was funded by National Science Foundation of China (No. 12271528).}}}

\author{Shuhuang Xiang\thanks{School of Mathematics and Statistics, Central South University, Changsha 410083, Hunan, People's Republic of China
  (\email{xiangsh@csu.edu.cn}).}
\and Shunfeng Yang\thanks{School of Mathematics and Statistics, Central South University, Changsha 410083, Hunan, People's Republic of China
  (\email{yangshunfeng@163.com}), corresponding author.}}

\usepackage{amsopn}


\ifpdf
\hypersetup{
  pdftitle={ rational approximation of exponential clustered poles},
  pdfauthor={Shuhuang Xiang, and Shunfeng Yang}
}
\fi

%


\begin{document}

\maketitle

\begin{abstract}
This paper builds further rigorous analysis on the root-exponential convergence for lightning schemes approximating corner singularity problems. By utilizing  Poisson summation formula, Runge's approximation theorem and Cauchy's integral theorem, the optimal rate is obtained for efficient lightning plus polynomial schemes, newly developed by Herremans, Huybrechs and Trefethen 
\cite{Herremans2023},  for approximation of  $g(z)z^\alpha$ or  $g(z)z^\alpha\log z$ in a sector-shaped domain with tapered exponentially clustering poles, where $g(z)$ is analytic on the sector domain. From these results, Conjecture 5.3  in \cite{Herremans2023}  on the root-exponential convergence rate is confirmed and the choice of the parameter $\sigma_{opt}=\frac{\sqrt{2(2-\beta)}\pi}{\sqrt{\alpha}}$ may achieve the fastest convergence rate
among all $\sigma>0$. Furthermore, based on Lehman and Wasow's study of  corner singularities \cite{Lehman1954DevelopmentsIT, Wasow}, together with the decomposition of Gopal and Trefethen \cite{Gopal2019}, root-exponential rates for lightning plus polynomial schemes in corner domains $\Omega$ are validated, and the best choice of lightning clustering parameter $\sigma$ for $\Omega$ is also obtained explicitly. The thorough analysis provides a solid foundation for lightning schemes.

\end{abstract}

\begin{keywords}
   lightning plus polynomial scheme, rational function, convergence rate, corner singularity, tapered exponentially clustering poles
\end{keywords}

\begin{AMS}
  41A20, 65E05, 65D15, 30C10
\end{AMS}
\section{Introduction}
\label{sec:Int}
In the study of  partial differential equations in  corner domains, the solutions may possess  isolated branch points 
at the corner points \cite{Wasow}.
The standard techniques for solving such  problems face  challenges in calculating accurate solutions \cite{GopTre2019}. In recent years, efficient and powerful lightning schemes have been developed via rational functions
\begin{equation}\label{eq:rat}
f(z)\approx r_N(z)=\frac{p(z)}{q(z)}=\sum_{j=1}^{N_1}\frac{a_j}{z-p_j}+\sum_{j=0}^{N_2} b_jz^j:=r_{N_1}(z)+b_{N_2}(z),\, N=N_1+N_2
\end{equation}
for corner singularities
 \cite{Brubeck2022,GopTre2019,Gopal2019,Herremans2023,Nakatsukasa2021,Trefethen2021}, which achieve root-exponential
convergence by extensive numerical experiments for solving  Laplace, Helmholtz,
and biharmonic equations (Stokes flow).


For the prototype  $f(x)=x^{\alpha}$ on $[0,1]$ with $0<\alpha<1$, to achieve the best convergence rate $\mathcal{O}(e^{-2\pi\sqrt{\alpha N}})$ \cite{Stahl2003},  Herremans et al. \cite{Herremans2023} introduced a lightning $+$ polynomial approximation (LP) supported by the tapered exponentially clustering poles
\begin{equation}\label{eq:tapered2}
p_j =-C\exp(-\sigma(\sqrt{N_1}-\sqrt{j})),\quad 1\leq j\leq N_1
\end{equation}
with $\sigma>0$ and $C$ a positive number. Especially,
there exist coefficients $\{a_j\}_{j=1}^{N_1}$ and a polynomial $b_{N_2}$ with
$N_2 = \mathcal{O}(\sqrt{N_1})$, for which $r_N(x)$ \cref{eq:rat} having
tapered lightning poles \cref{eq:tapered2}
with $\sigma = \frac{2\pi}{\sqrt{\alpha}}$
satisfies that
\begin{equation}\label{eq: brate}
|r_N(x)-x^\alpha|=\mathcal{O}(e^{-2\pi\sqrt{\alpha N}})
\end{equation}
as $N \rightarrow \infty$, which leads to a significant increase in the achievable accuracy as well as the optimal convergence
rate as
shown by Stahl  \cite{Stahl2003}. Furthermore, the choice of the parameter $\sigma=\frac{2\pi}{\sqrt{\alpha}}$ achieves the fastest convergence rate
among all $\sigma>0$. For more details,
see Herremans et al. \cite{Herremans2023} and Xiang, Yang and Wu \cite{XY2023}.

For turning to problems of scientific computing,  the V-shaped domain, as depicted in {\sc Fig}. \ref{Vsector} (left),  is  pivotal. Herremans et al. \cite{Herremans2023} conjectured on the V-shaped domain, which  has been substantiated through  ample delicate numerical experiments in \cite{Herremans2023}.

{\bf Conjecture 5.3} \cite{Herremans2023}.
There exist coefficients $\{a_j\}_{j=1}^{N_1}$ and a polynomial $b_{N_2}$ with
$N_2 = \mathcal{O}(\sqrt{N_1})$, for which the LP $r_N(z)$ \cref{eq:rat} to $z^\alpha$ endowed with tapered lightning poles \cref{eq:tapered2} parameterized by
\begin{align}\label{eq:optV}
\sigma=\frac{\sqrt{2(2-\beta)}\pi}{\sqrt{\alpha}}
\end{align}
satisfies
\begin{align}\label{eq:rateV}
|r_N(z)-z^\alpha|=\mathcal{O}(e^{-\pi\sqrt{2(2-\beta) N\alpha}})
\end{align}
uniformly for $z\in V_\beta=\{z=xe^{\pm \frac{\beta\pi}{2}i},\ x\in [0,1]\}$ for arbitrary fixed $\beta\in [0,2)$.

\begin{figure}[htbp]
\centerline{\includegraphics[width=14cm]{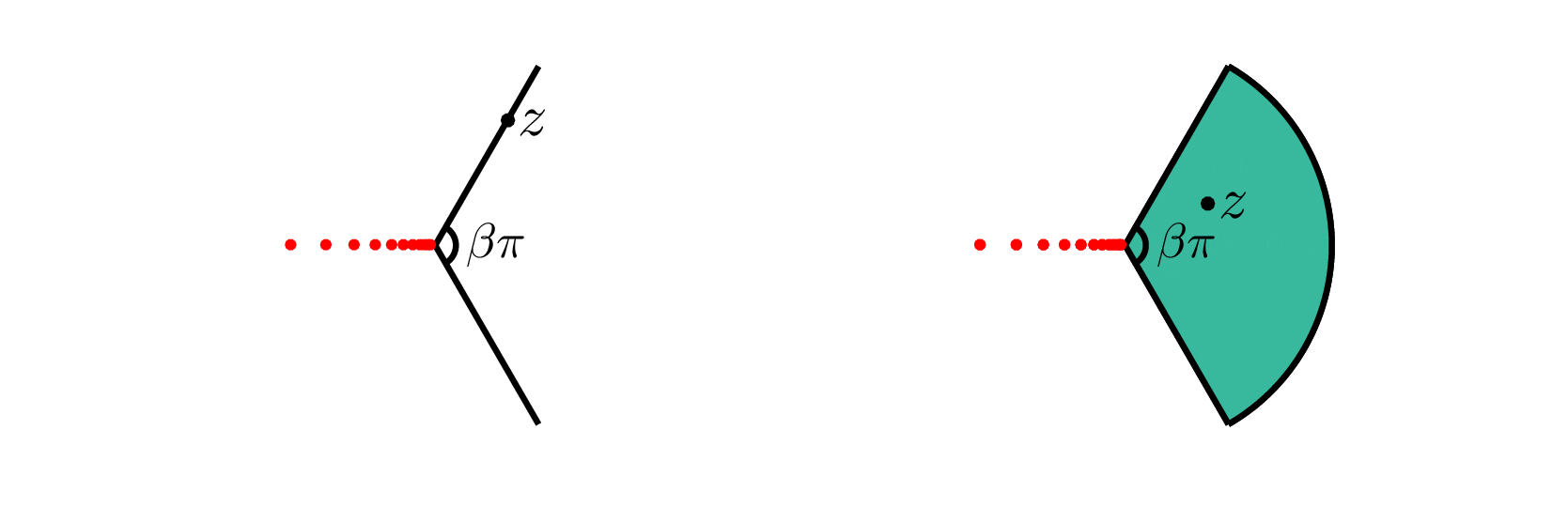}}
\caption{V-shaped domain (left):
$V_\beta=\left\{z: \, z=xe^{\pm \frac{\beta\pi}{2}i}
\mbox{\, with\, $x\in[0,1]$}\right\}$ and
sector domain (right):
$S_{\beta}=\left\{z: \, z=xe^{\pm \frac{\theta\pi}{2}i} \mbox{\, with\, $x\in [0,1]$ and $\theta\in [0,\beta]$}\right\}$
for fixed $\beta\in [0,2)$.
The red points illustrate the distributions of the clustering poles \cref{eq:tapered2}.}
\label{Vsector}
\end{figure}


The conjecture states that the LP, based on the specific  $\sigma=\frac{\sqrt{2(2-\beta)}\pi}{\sqrt{\alpha}}$ for $z^\alpha$ on V-shaped domain $V_\beta$, exhibits a root-exponential convergence rate, which aligns with the best rational approximation in the sense of Stahl \cite{Stahl2003} in the special case $\beta=0$. 
Additionally, we will see that the value $\frac{\sqrt{2(2-\beta)}}{\sqrt{\alpha}}$ is the optimal value for lightning parameter $\sigma$, and hence denoted by $\sigma_{opt}$.

To study the convergence of LPs, it is vital to consider the approximation on the sector-shaped domain $S_\beta$ (see {\sc Fig}. \ref{Vsector} (right)), which includes $V_\beta$ as a special subset.
By employing integral representations of $z^\alpha$ and $z^\alpha\log z$ and along with Runge's approximation theorem, Poisson summation formula \cite{Henrici} and  Cauchy's integral theorem, in this paper, the root-exponential convergence rates of the LPs on  $S_\beta$ is established,
from which  the fastest convergence rates in the uniform norm sense can be attained when the parameter $\sigma$ is chosen as the optimal value $\sigma_{opt}$.

\begin{theorem}\label{mainthm}
There exist coefficients $\{a_j\}_{j=1}^{N_1}$ and a polynomial $b_{N_2}$ with
$N_2 = \mathcal{O}(\sqrt{N_1})$, for which the LP approximation $r_N(z)$ \cref{eq:rat}  to $z^\alpha$ endowed with the
tapered lightning poles \eqref{eq:tapered2} parameterized by $\sigma>0$  satisfies

\begin{equation}\label{eq: rate1}
|r_N(z)-z^\alpha|=\left\{\begin{array}{ll}
\mathcal{O}(e^{-\sigma\alpha\sqrt{N}}),&\sigma\le \sigma_{opt},\\
\mathcal{O}(e^{-\pi\eta\sqrt{2(2-\beta)N\alpha}}),&\sigma> \sigma_{opt},
\end{array}\right.\quad \eta:=\frac{\sigma_{opt}}{\sigma}
\end{equation}
as $N \rightarrow \infty$, uniformly for $z\in S_{\beta}$.
\end{theorem}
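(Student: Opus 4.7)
The plan is to decompose $z^\alpha = R(z) + E(z)$, where $R$ is a Cauchy integral along the branch cut of $\zeta^\alpha$ (to be captured by the $N_1$ lightning terms) and $E$ is holomorphic on a neighborhood of $S_\beta$ (to be captured by the polynomial $b_{N_2}$). The quadrature error for $R$ is then bounded by the Poisson summation formula. The sector geometry enters only through the width of the strip in which a transformed integrand is analytic, and this strip is exactly what pins down $\sigma_{opt}$.

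For the representation and quadrature, Cauchy's theorem on a keyhole contour around $(-M,0]$ inside a disk of radius $M$ strictly containing $S_\beta$ yields
\begin{equation*}
  z^\alpha=\frac{\sin\alpha\pi}{\pi}\int_0^M\frac{s^\alpha}{z+s}\,ds+E(z),\qquad z\in S_\beta,
\end{equation*}
with $E$ analytic in a slightly larger disk, hence approximable uniformly on $S_\beta$ to accuracy $\mathcal{O}(e^{-cN_2})$ by a polynomial of degree $N_2=\mathcal{O}(\sqrt{N_1})$ (Runge / Bernstein--Walsh). Setting $C=M$ and substituting $s=Ce^{-\sigma(\sqrt{N_1}-\sqrt{x})}$ sends $x\in(-\infty,N_1]$ bijectively onto $s\in(0,C]$; restricting to $x\in[0,N_1]$ discards $s\in(0,Ce^{-\sigma\sqrt{N_1}}]$, whose contribution is $\mathcal{O}(e^{-\sigma\alpha\sqrt{N_1}})$ uniformly on $S_\beta$. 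The substituted integrand
\begin{equation*}
  F(x,z)=\frac{\sin\alpha\pi}{\pi}\cdot\frac{\sigma C^{1+\alpha}\,e^{-(1+\alpha)\sigma(\sqrt{N_1}-\sqrt{x})}}{2\sqrt{x}\bigl(z+Ce^{-\sigma(\sqrt{N_1}-\sqrt{x})}\bigr)}
\end{equation*}
is such that the trapezoidal rule with unit step gives $\sum_{j=1}^{N_1}F(j,z)=\sum_{j=1}^{N_1}a_j/(z-p_j)$, identifying the coefficients $a_j$ and the tapered poles \cref{eq:tapered2}.

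The core of the argument is the Poisson-summation bound on the remaining quadrature error. Writing $\sum_{j\in\mathbb{Z}}F(j,z)-\int_{\mathbb{R}}F(x,z)\,dx=\sum_{k\ne 0}\widehat F(k,z)$ and absorbing the tails at $x\le 0$ and $x>N_1$ into the $\mathcal{O}(e^{-\sigma\alpha\sqrt{N_1}})$ bound, I estimate each $\widehat F(k,z)$ by contour displacement. The obstruction to moving the $x$-contour is the zero of $z+Ce^{-\sigma(\sqrt{N_1}-\sqrt{x})}$; in the auxiliary variable $y=\sqrt{N_1}-\sqrt{x}$ it lies at $y=\sigma^{-1}\log(-z/C)$, and for $z\in S_\beta$ the quantity $-z/C$ subtends an angle of at least $(2-\beta)\pi/2$ with the real axis, forcing $|\Im y|\ge(2-\beta)\pi/(2\sigma)$. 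Lifting back via $x=(\sqrt{N_1}-y)^2$, the effective strip of analyticity of $F(\cdot,z)$ near the dominant end $x\approx N_1$ has width $\approx(2-\beta)\pi\sqrt{N_1}/\sigma$, so $|\widehat F(k,z)|\lesssim e^{-2\pi|k|(2-\beta)\pi\sqrt{N_1}/\sigma}$ and the Poisson error is $\mathcal{O}(e^{-2\pi^2(2-\beta)\sqrt{N_1}/\sigma})$. This step is the main obstacle: because of the $1/\sqrt{x}$ factor, $F$ is not analytic in a uniform horizontal strip about $\mathbb{R}$, so the Poisson argument must either be recast in the $y$-variable (where analyticity is uniform but the node spacing $y_j-y_{j+1}\approx 1/(2\sqrt{j})$ is not) or supplemented by a careful treatment of a shrinking neighbourhood of $x=0$ whose contribution is already dominated by the truncation bound.

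Finally, balancing the two dominant exponents $\sigma\alpha\sqrt{N_1}$ and $2\pi^2(2-\beta)\sqrt{N_1}/\sigma$ gives $\sigma_{opt}^2=2\pi^2(2-\beta)/\alpha$, i.e.\ $\sigma_{opt}=\sqrt{2(2-\beta)}\pi/\sqrt{\alpha}$. For $\sigma\le\sigma_{opt}$ the truncation dominates and the rate is $\mathcal{O}(e^{-\sigma\alpha\sqrt{N}})$, while for $\sigma>\sigma_{opt}$ the Poisson term dominates, which rewrites as $\mathcal{O}(e^{-\pi\eta\sqrt{2(2-\beta)N\alpha}})$ with $\eta=\sigma_{opt}/\sigma$. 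Since $N_2=\mathcal{O}(\sqrt{N_1})$, the polynomial-approximation error for $E(z)$ can be made smaller than either principal term by a suitable implicit constant, and replacing $N_1$ by $N=N_1+N_2$ in the exponents preserves the asymptotic rate asserted in \cref{mainthm}.
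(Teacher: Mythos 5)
Your overall skeleton coincides with the paper's: integral representation of $z^\alpha$, tapered exponential substitution so that the trapezoidal sum reproduces the poles \cref{eq:tapered2}, Poisson summation plus contour displacement for the aliasing error, Runge/Bernstein--Walsh for the analytic remainder with $N_2=\mathcal{O}(\sqrt{N_1})$, and a balance of the truncation exponent $\sigma\alpha\sqrt{N_1}$ against the aliasing exponent $2\pi^2(2-\beta)\sqrt{N_1}/\sigma$ to identify $\sigma_{opt}$. However, there is a genuine gap at the heart of the argument: the claimed uniformity over $z\in S_\beta$. Your strip-width estimate ``$|\Im y_*|\ge(2-\beta)\pi/(2\sigma)$, hence effective width $\approx(2-\beta)\pi\sqrt{N_1}/\sigma$ near $x\approx N_1$'' is only valid when $|z|$ is bounded away from the corner. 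Writing $|z|=\xi$, the relevant pole in your quadrature variable sits at $\sqrt{x_*}=\sqrt{N_1}+\sigma^{-1}\log(\xi/C)+i\phi/\sigma$ with $|\phi|\ge(2-\beta)\pi/2$, so its distance from the real axis is of size $(2-\beta)\pi\bigl(\sqrt{N_1}+\sigma^{-1}\log(\xi/C)\bigr)/\sigma$ (this is exactly the paper's $a_{0,\beta}=(2-\beta)\alpha\pi\bigl(T+\alpha\log\frac{\xi}{C}\bigr)$ in its variables), which shrinks to zero as $\xi$ decreases toward the innermost-pole scale $Ce^{-\sigma\sqrt{N_1}}$. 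Consequently your bound $|\widehat F(k,z)|\lesssim e^{-2\pi|k|(2-\beta)\pi\sqrt{N_1}/\sigma}$ fails uniformly on $S_\beta$, and the ``balance two exponents'' step does not deliver \cref{eq: rate1} uniformly; in particular it cannot by itself produce the dichotomy between $\sigma\le\sigma_{opt}$ and $\sigma>\sigma_{opt}$ for $z$ near the corner.

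The paper closes precisely this gap with three ingredients you would need to supply: (i) for $\xi\in[0,x^*]$ with the exponentially small threshold $x^*$ of \cref{eq:realM0}, it bounds the integral and the quadrature sum \emph{separately} by $\mathcal{O}(e^{-T})$ (\cref{la31}), since no strip argument is available there; (ii) for $\xi\in[x^*,1]$ the contour estimates (Lemmas \ref{la3}--\ref{infty}) are carried out so as to retain an explicit factor $\xi^{\alpha}$ coming from the residue/size of the integrand at the displaced contours, giving the error $\mathcal{O}(e^{-T})+\mathcal{O}\bigl(\xi^\alpha/(e^{\frac{2\pi}{h}a_{0,\beta}}-1)\bigr)$ of \cref{eq:thm}; and (iii) the monotonicity analysis of $Q(\xi)=\xi^{\alpha}/(e^{\frac{2\pi}{h}a_{0,\beta}}-1)$ in \cref{eq:PalyC}, where the vanishing numerator $\xi^\alpha$ compensates the collapsing exponent and yields $\mathcal{O}(e^{-T})$ when $\sigma\le\sigma_{opt}$ but only $\mathcal{O}(e^{-\eta^2T})$ when $\sigma>\sigma_{opt}$ --- this is where the two cases of \cref{eq: rate1} actually come from. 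A secondary, smaller issue is the branch point of your $F(\cdot,z)$ at $x=0$ (the $1/\sqrt{x}$ factor), which you flag but do not resolve; the paper handles it by working in the squared variable with the even reflection \cref{eq:extension_of_f} and contours kept away from the origin, again feeding into \cref{la31}. Without items (i)--(iii) your proof establishes the stated rates only for $|z|$ bounded below, not uniformly on $S_\beta$.
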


From \cref{mainthm}, we see that  Conjecture 5.3 holds in the special case $\sigma_{opt} =\frac{\sqrt{2(2-\beta)}\pi}{\sqrt{\alpha}}$ and $z\in V_\beta$, by which $r_N(z)$ also achieves the fastest rate among all
$\sigma>0$. Furthermore, the similar result holds for $z^\alpha\log z$ in $S_{\beta}$ too. 

\begin{theorem}\label{mainthm2}
There exist coefficients $\{\widetilde{a}_j\}_{j=1}^{N_1}$ and a polynomial $\widetilde{b}_{N_2}$ with
$N_2 = \mathcal{O}(\sqrt{N_1})$, for which the LP $\widetilde{r}_N(z)$ \cref{eq:rat}  to $z^\alpha\log z$ with
tapered lightning poles \eqref{eq:tapered2} parameterized by $\sigma>0$  satisfies

\begin{equation}\label{eq: rate2}
|\widetilde{r}_N(z)-z^\alpha\log{z}|=\left\{\begin{array}{ll}
\mathcal{O}(\sqrt{N\sigma^2\alpha^2}e^{-\sigma\alpha\sqrt{N}}),&\sigma\le \sigma_{opt}\\
\mathcal{O}(e^{-\pi\eta\sqrt{2(2-\beta)N\alpha}}),&\sigma> \sigma_{opt}
\end{array}\right.
\end{equation}
as $N \rightarrow \infty$, uniformly for $z\in S_{\beta}$.
\end{theorem}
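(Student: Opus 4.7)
The natural plan is to reduce the bound for $z^\alpha\log z$ to the one already obtained for $z^\alpha$ in \cref{mainthm} via the identity $z^\alpha\log z=\partial_\alpha z^\alpha$. Starting from the Hankel/Stieltjes-type integral representation of $z^\alpha$ used to establish \cref{mainthm} (which expresses $z^\alpha$ as an integral of $t^\alpha/(z+t)$ along the negative real axis against a weight involving $\sin(\pi\alpha)/\pi$), differentiation in $\alpha$ yields an integral representation of $z^\alpha\log z$ as a sum of two Stieltjes-type integrals: the same kernel with the coefficient $\sin(\pi\alpha)/\pi$ replaced by $\cos(\pi\alpha)/\pi$, plus a new integral whose density carries an additional factor $\log t$. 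The candidate rational function $\widetilde{r}_N(z)$ is then built by discretising both integrals at the tapered exponentially clustered abscissae induced by \cref{eq:tapered2}, producing the partial-fraction coefficients $\{\widetilde{a}_j\}$, and absorbing the analytic truncation tails into a polynomial $\widetilde{b}_{N_2}$ of degree $N_2=\mathcal{O}(\sqrt{N_1})$ via Runge's theorem together with Cauchy's integral formula, exactly as in the proof of \cref{mainthm}.

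The first integral is handled verbatim by \cref{mainthm} and yields the same two-regime bound without any polynomial prefactor. For the second integral, after the change of variables $t=Ce^{-\sigma(\sqrt{N_1}-s)}$ one has $\log t=\log C-\sigma(\sqrt{N_1}-s)$, so that the extra factor becomes affine in the new variable $s$. Applying the Poisson summation formula term by term, this affine weight interacts with the Fourier tails as a differentiation in the dual variable: the exponential decay is preserved while a polynomial prefactor of order $\sigma\sqrt{N_1}$ appears. Multiplied against the inherited decay of order $e^{-\sigma\alpha\sqrt{N}}$ from \cref{mainthm}, this is precisely the $\sqrt{N\sigma^2\alpha^2}=\sigma\alpha\sqrt{N}$ factor in \cref{eq: rate2} for the first regime $\sigma\le\sigma_{opt}$.

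For $\sigma>\sigma_{opt}$ one repeats the Cauchy-contour deformation of \cref{mainthm}, pushing the contour against the branch cut to extract the optimal rate $e^{-\pi\eta\sqrt{2(2-\beta)N\alpha}}$. On that contour, $\log t$ is only logarithmic in $|t|$ and therefore dominated by any arbitrarily small exponential adjustment; the polynomial prefactor is consequently absorbed into the exponential, leaving the same bound as for $z^\alpha$. The main technical obstacle is precisely this bookkeeping of the $\log t$-induced weight through the Poisson transform in the clustered regime: one must verify that the growth is exactly linear in $\sigma\alpha\sqrt{N}$ rather than a higher power (which is what the affine dependence on $s$ guarantees, but only after carefully applying summation by parts in the Poisson series), and that in the optimal regime the weight is indeed harmless against the steepest-descent decay. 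Everything else is a reapplication of the Runge-plus-contour machinery already developed in the proof of \cref{mainthm}.
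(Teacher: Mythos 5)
Your proposal follows essentially the same route as the paper: the same two-integral representation of $z^\alpha\log z$ (the paper derives it by the residue theorem in \cref{complex_int_log_repre}, which is equivalent to your $\partial_\alpha$ derivation), the same tapered trapezoidal discretization with the analytic remainder absorbed into a polynomial via Runge's theorem, and the same Poisson-summation plus contour-deformation analysis, giving $\mathcal{O}(Te^{-T})$ with $T=\sigma\alpha\sqrt{N_1}$ for $\sigma\le\sigma_{opt}$ and $\mathcal{O}(e^{-\eta^2 T})$ for $\sigma>\sigma_{opt}$. The only real difference is cosmetic bookkeeping of the affine weight: the paper writes $f_{log}(u,z)=\frac{1}{\alpha}(\sqrt{u}-T)f(u,z)$ and splits off $-\frac{T}{\alpha}f(u,z)$ directly (with the truncation tails and near-origin bounds already contributing $\mathcal{O}(Te^{-T})$, and the pole-circle terms carrying no $T$ factor, cf. \cref{la31} and \cref{LemmalogA}), rather than your summation-by-parts/dual-variable argument, but both yield the same linear-in-$\sigma\alpha\sqrt{N}$ prefactor and the same prefactor-free bound in the supercritical regime.
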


Theorems \ref{mainthm} and \ref{mainthm2} can be readily extended to $g(z)z^{\alpha}$ and $g(z)z^{\alpha}\log{z}$ for $g(z)$  analytic on $S_\beta$ by applying Runge's approximation theorem \cite{Gaier1987}.

Furthermore, following the rigorous decompositions  
in Gopal and Trefethen  \cite{Gopal2019}, together with  Lehman and Wasow's contributions on  corner singularities of solutions of
partial differential equations \cite{Lehman1954DevelopmentsIT, Wasow}, these results on $S_\beta$ can be extended to  the case in
which the domain $\Omega$ is a polygon (with every internal angle $<2\pi$, see {\sc Fig}. \ref{general_domain} (first row) for example) for  solving Laplace boundary problems by LPs  with  root-exponential convergence rates in domains with corners, which attests the presume ``in fact we believe convexity is not necessary'' \cite{Gopal2019}.

\begin{theorem}\label{corner}
Let $\Omega$ be a  polygon with corners $w_1,\ldots,w_m$, and let $f$ be
a holomorphic function in $\Omega$ that is analytic on the interior of each side segment and can
be analytically continued to a disk around each $w_k$ with a slit along the exterior bisector
there. Assume $f$ satisfies $f(z)-f(w_k) = (z -w_k)^{\alpha_k}h_k(z)$ as $z\rightarrow w_k$ for each $k$ with some
$\alpha_k\in(0,1)$ and $h_k(z)$ analytic in a neighborhood of $w_k$.
Then there exists a rational approximation $r_n(z)=\sum_{k=1}^mr^{(k)}_{N}(z)+\mathcal{T}(z)$ with $r^{(k)}_{N}(z)$ being the LP approximation around the corner $w_k$ with $\sigma=\frac{\sqrt{2(2-\beta)}}{\sqrt{\alpha}}\pi$, where $\mathcal{T}(z)$ a polynomial of  degree $N_2 = \mathcal{O}(\sqrt{N})$, uniformly for $z\in \Omega$ satisfies
\begin{equation}\label{eq: corrate1}
|r_n(z)-f(z)|=\mathcal{O}\left(e^{-\pi\sqrt{2(2-\beta)N\alpha}}\right)
\end{equation}
as $N\rightarrow \infty$, where $\alpha=\min_{1\le k\le m}\alpha_k$,  $\beta=\max_{1\le k\le m}\beta_k$ and $\beta_k\pi$ denotes the internal angle at $w_k$.
\end{theorem}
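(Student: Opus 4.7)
The plan is to reduce the approximation problem on the polygon $\Omega$ to a sum of local problems at each corner plus a polynomial approximation of the analytic remainder, following the framework of Gopal and Trefethen \cite{Gopal2019} and using the Lehman--Wasow corner expansions \cite{Lehman1954DevelopmentsIT, Wasow}. First, I would decompose
\[
f(z)=f_0(z)+\sum_{k=1}^{m}F_k(z),\qquad F_k(z)=(z-w_k)^{\alpha_k}g_k(z),
\]
where each $g_k$ is analytic on a slit disk around $w_k$ large enough to contain a sector supporting the local lightning construction, while $f_0$ extends analytically to a neighborhood of $\overline{\Omega}$. The single-power-singularity hypothesis on $f$, together with analyticity of $h_k$ on the slit disk, makes such a decomposition explicit: one peels the singular part off at each $w_k$ through a contour-integral representation whose branch cut lies along the exterior bisector and so does not meet $\Omega\setminus\{w_k\}$.

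Second, for each $k$, I would apply the Runge-based extension of Theorem~\ref{mainthm} (noted after Theorem~\ref{mainthm2}) to $g_k(z)z^{\alpha_k}$ with the common clustering parameter $\sigma=\sqrt{2(2-\beta)}\pi/\sqrt{\alpha}$. Writing $\sigma_{opt,k}=\sqrt{2(2-\beta_k)}\pi/\sqrt{\alpha_k}$ and $\eta_k=\min\{1,\sigma_{opt,k}/\sigma\}$, Theorem~\ref{mainthm} furnishes an LP $r_N^{(k)}$ with tapered poles \cref{eq:tapered2} at $w_k$ obeying, on a sector of opening $\beta_k\pi$ anchored at $w_k$,
\[
\bigl|r_N^{(k)}(z)-F_k(z)\bigr|=\mathcal{O}\!\left(e^{-\pi\eta_k\sqrt{2(2-\beta_k)N\alpha_k}}\right).
\]
A direct check using $\alpha_k\ge\alpha$ and $\beta_k\le\beta$ shows this rate is at least $\mathcal{O}(e^{-\pi\sqrt{2(2-\beta)N\alpha}})$, with equality attained at any corner realising the global worst case. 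For $f_0$, I would take a polynomial $\mathcal{T}(z)$ of degree $N_2=\mathcal{O}(\sqrt{N})$ via Faber or Runge approximation on a neighborhood of $\overline{\Omega}$ where $f_0$ is analytic; this delivers exponential convergence $\mathcal{O}(e^{-c\sqrt{N}})$, dominating the corner rate. The final approximant is $r_n:=\sum_{k=1}^{m}r_N^{(k)}+\mathcal{T}$.

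The hard part will be transferring each local sector estimate from $S_{\beta_k}$ at $w_k$ to all of $\Omega$, which is automatic only when $\Omega$ is convex (the setting of \cite{Gopal2019}). In general, the crucial fact is that $e_N^{(k)}:=r_N^{(k)}-F_k$ is holomorphic on $\mathbb{C}$ minus the exterior bisector $L_k$ at $w_k$, and, since $\Omega$ lies entirely on one side of $L_k$, on $\overline{\Omega}$. On the portion of $\partial\Omega$ within the local sector, Theorem~\ref{mainthm} supplies the desired exponential bound; on the remaining portion, the tapered poles \cref{eq:tapered2} lie in a shrinking neighborhood of $w_k$, so $r_N^{(k)}$ is uniformly bounded, as is $F_k$ by analyticity. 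A two-constants/harmonic-measure argument applied to the subharmonic function $\log|e_N^{(k)}|$, combined with the freedom to enlarge the local sector up to opening $\beta\pi$ via the analytic continuation of $g_k$ across its boundary rays, then propagates the estimate to all of $\overline{\Omega}$. Summing the $m+1$ error contributions yields \cref{eq: corrate1}.
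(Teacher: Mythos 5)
Your overall skeleton is the same as the paper's: split $f$ into corner pieces plus a globally analytic remainder via Cauchy-type integrals whose cuts lie along the exterior bisectors (the Gopal--Trefethen decomposition), approximate the analytic remainder by a Runge polynomial of degree $\mathcal{O}(\sqrt{N})$, apply \cref{mainthm} (extended to $g(z)z^{\alpha}$) at each corner with the common $\sigma$, and conclude by the monotonicity comparison $\sigma\alpha\le\sigma\alpha_k$, $\eta_k\sqrt{2(2-\beta_k)N\alpha_k}\ge\sqrt{2(2-\beta)N\alpha}$. However, there is a genuine gap at your first step: you assert that the peeling produces exactly $F_k(z)=(z-w_k)^{\alpha_k}g_k(z)$ with $g_k$ analytic on a slit disk and a remainder $f_0$ analytic near $\overline{\Omega}$, calling this ``explicit.'' It is not: $h_k$ is only analytic near $w_k$, so you cannot subtract $(z-w_k)^{\alpha_k}h_k(z)$ globally, and once you replace it by the slit Cauchy integral $f_k(z)=\frac{1}{2\pi i}\int_{\Lambda_k}\frac{f(\zeta)}{\zeta-z}d\zeta$ you must \emph{prove} that this integral again has the local form (constant)$\times(z-w_k)^{\alpha_k}h_k(z)$ plus a holomorphic function --- in particular that no logarithmic term is generated by the Cauchy kernel acting on the density $\zeta^{\alpha_k}$. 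This is precisely the content of \cref{lemma_for_fk1} and \cref{coro:dec_sing} in the paper, proved via Lehman's expansion of $\int_0^{\mathcal{W}}\zeta^{k+\alpha}/(\zeta-z)\,d\zeta$, where the key point is that $P_0(\log z)$ is a \emph{constant}; your proposal takes this for granted, and it is the main new technical ingredient needed for \cref{corner}.

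The second problem is your transfer step for non-convex $\Omega$. The premise ``$\Omega$ lies entirely on one side of $L_k$'' is false in general for non-convex polygons, and, more importantly, the two-constants/harmonic-measure argument cannot deliver the stated constant in the exponent: with $|e_N^{(k)}|\le\epsilon:=e^{-\pi\sqrt{2(2-\beta)N\alpha}}$ only on the boundary portion inside the local sector and merely $\mathcal{O}(1)$ elsewhere, the two-constants theorem gives $|e_N^{(k)}(z)|\le\epsilon^{\omega(z)}M^{1-\omega(z)}$ with the harmonic measure $\omega(z)$ bounded strictly below $1$ on the part of $\Omega$ outside the sector, so you only obtain $e^{-c\,\omega\,\sqrt{N}}$ with a strictly smaller constant, not the rate claimed in the theorem. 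The paper avoids any such loss by using that \cref{mainthm} holds on a full sector $S_{\beta'}$ of any opening $\beta'\pi<2\pi$ with apex at $w_k$ containing $\Omega$ (this is how its examples are handled), so the sector estimate applies directly on all of $\Omega$ rather than being propagated by a subharmonicity argument; if you want to treat polygons not covered by such sectors, you need a different mechanism than two-constants, or you must accept a degraded exponent.
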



The rigorous analysis laid out in this paper provides a solid foundation on the root-exponential convergence for the LPs on corner domains even with curved lines such as pentagram, curvy square, L-shaped and moon-shaped domains, etc. See {\sc Fig}. \ref{general_domain} for example.

\begin{figure}[htbp]
\centerline{\includegraphics[width=16cm]{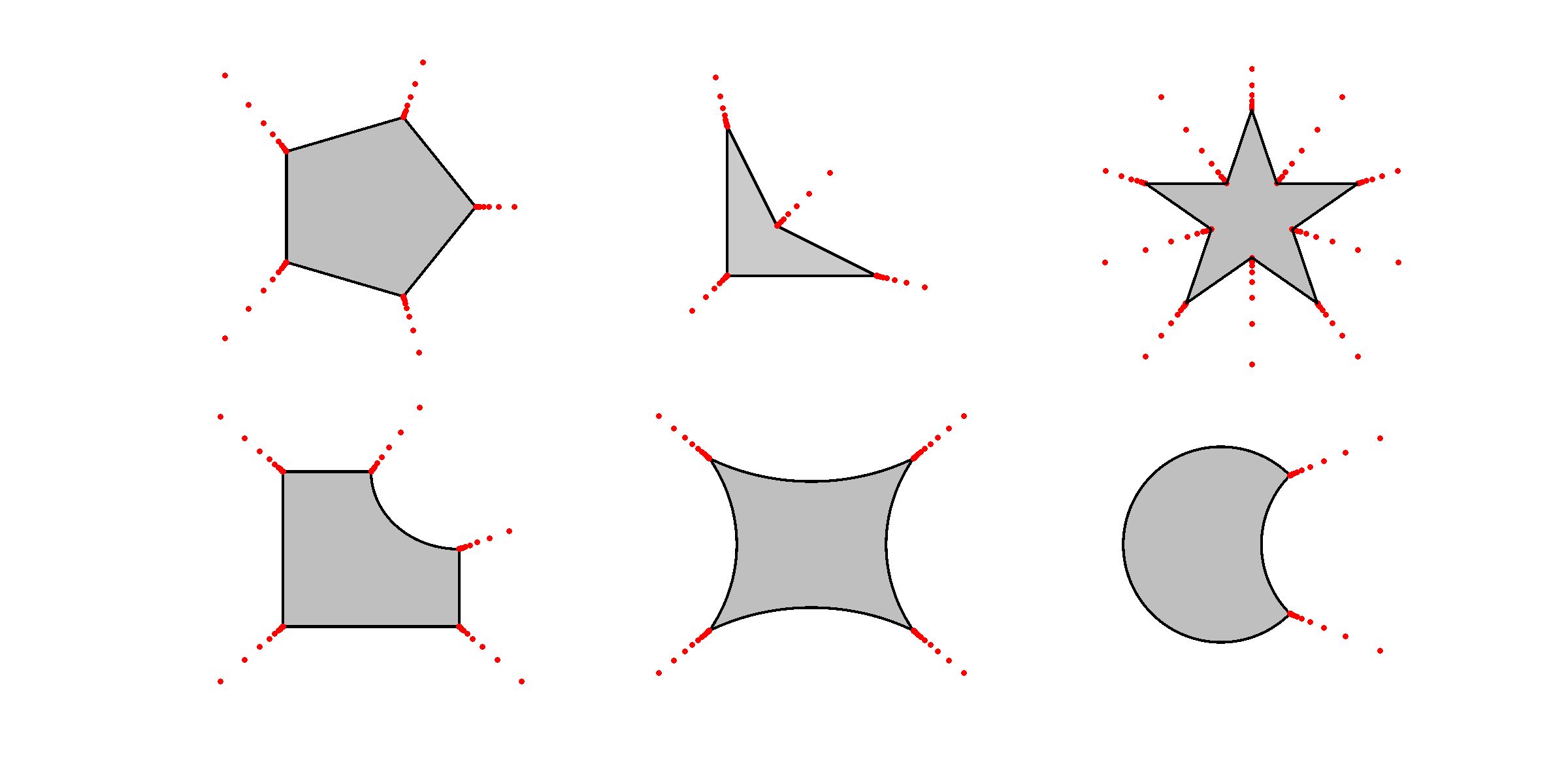}}
\caption{Various corner domains: convex pentagon (first), concave quadrilateral (second), pentagram (third), curvy L-shaped (forth), curvy square (fifth) and moon-shaped (sixth) domains. 
The red points illustrate the distributions of the clustering poles \cref{eq:tapered2}.}
\label{general_domain}
\end{figure}

The rest of this paper is organized as follows. Initially, \cref{sec:2}
is devoted to the generalization of integral representation of $x^{\alpha}$ on interval $[0,1]$ to those of $z^{\alpha}$ and $z^{\alpha}\log{z}$ on the sector domain $S_{\beta}$, whose LP schemes are constructed and the truncated errors are presented. In \cref{sec:3} we present thorough analysis for the convergence rates of numerical quadratures of the integrals for $z^{\alpha}$ and $z^{\alpha}\log{z}$, which play a crucial role in deriving the root-exponential decay rates of LPs.
Then the main results Theorems \ref{mainthm} and \ref{mainthm2} are showed in \cref{sec:convergence}. In \cref{app_cornerdomain} we extend these discuss to corner singularity problems, which shows the root-exponential convergence for LPs  and the best choice of parameter $\sigma$. Finally, some conclusions are presented in \cref{conclusion} and four useful Lemmas are proven in \cref{AppendixA}.
Meanwhile, we include some numerical experiments along with theoretical analysis to illustrate the sharpness of the estimated error bounds and the optimality of parameter choice.

Lots of numerical experiments in this paper are implemented based on the Matlab function \texttt{laplace} developed by Gopal and Trefethen in \cite{Gopal2019}.




\section{Preparatories: integral representations and LP schemes}
\label{sec:2}
In this section, based on Cauchy's residue theorem we first generalise the integral formula of $x^{\alpha}$ in $[0,1]$ to that of $z^{\alpha}$ on the slit disk. Additionally, the formula for $z^{\alpha}\log{z}$ is presented in a similar approach. Then, the LP schemes for them are constructed and the truncated errors are presented based on these preparatories.

\subsection{Integral formula and LP for $z^{\alpha}$}\label{sec:21}
According to \cite[(3.222), p. 319]{GR2014}, $x^\alpha$ on $[0,1]$ can be represented by
 \begin{align*}
x^\alpha&=\frac{\sin(\alpha\pi)}{\alpha\pi}\int_0^{+\infty} \frac{x}{y^{\frac{1}{\alpha}}+x}dy.
\end{align*}
We generalize the integral representation in the complex plane.

\begin{lemma}\label{complex_int_repre}
The following holds for all $z\in\mathbb{C}\setminus(-\infty,0)$ and $\alpha\in(0,1)$
\begin{align}\label{eq:cint}
z^{\alpha}=\frac{\sin{(\alpha\pi)}}{\pi}
\int_0^{+\infty}\frac{zy^{\alpha-1}}{y+z}dy
=\frac{\sin{(\alpha\pi)}}{\alpha\pi}
\int_0^{+\infty}\frac{z}{y^{\frac{1}{\alpha}}+z}dy.
\end{align}
\end{lemma}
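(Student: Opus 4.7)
The plan is to establish the lemma by analytic continuation from the positive real axis, after reducing the two identities to a single computation. The second equality follows immediately from the change of variables $y=u^{1/\alpha}$, under which $y^{\alpha-1}\,dy=\frac{1}{\alpha}\,du$ and $y+z=u^{1/\alpha}+z$, so it suffices to prove the first equality
\[
z^{\alpha}=F(z):=\frac{\sin(\alpha\pi)}{\pi}\int_{0}^{\infty}\frac{z\,y^{\alpha-1}}{y+z}\,dy.
\]

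First, I would verify that $F$ is well-defined and holomorphic on $\mathbb{C}\setminus(-\infty,0]$. On any compact set $K$ in this slit plane, the distance from $K$ to $(-\infty,0]$ is strictly positive, which gives a uniform bound $|y+z|\ge c(1+y)$ for $y\ge 0$ and $z\in K$. The integrand is then dominated by $Cy^{\alpha-1}$ near $y=0$ and by $Cy^{\alpha-2}$ as $y\to\infty$; both are integrable since $\alpha\in(0,1)$. Together with Morera's theorem (or differentiation under the integral sign, justified by the same dominating function), this yields holomorphy of $F$ on the slit plane.

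Next, I would evaluate $F$ on the positive real axis. Taking $z=x>0$ and substituting $y=xt$ reduces the integral to the classical beta-function identity
\[
\int_{0}^{\infty}\frac{t^{\alpha-1}}{1+t}\,dt=\frac{\pi}{\sin(\alpha\pi)},\qquad \alpha\in(0,1),
\]
so $F(x)=x^\alpha$ for every $x>0$. Since both $F(z)$ and the principal branch of $z^\alpha$ are holomorphic on $\mathbb{C}\setminus(-\infty,0]$ and coincide on the ray $(0,\infty)$, the identity theorem forces them to agree throughout the slit plane; the boundary case $z=0$ is trivial because both sides vanish.

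I do not expect any genuine obstacle: the only technical point is the uniform integrability estimate underpinning the analyticity of $F$, and that reduces to routine bookkeeping with the bound $|y+z|\ge c(1+y)$ on compact subsets of the slit plane.
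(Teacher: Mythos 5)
Your proposal is correct, but it takes a genuinely different route from the paper. The paper proves the identity directly for complex $z$ by applying the residue theorem to $y^{\alpha-1}/(y+z)$ over a keyhole contour around the positive real axis: the jump $1-e^{2i\alpha\pi}$ across the branch cut, together with the residue $-e^{i\alpha\pi}z^{\alpha-1}$ at $y=-z$ and the vanishing of the large- and small-circle contributions, yields $\int_0^{+\infty}\frac{y^{\alpha-1}}{y+z}\,dy=\frac{\pi z^{\alpha-1}}{\sin(\alpha\pi)}$ in one stroke. You instead establish the identity on the positive real axis via the classical beta-type integral $\int_0^\infty\frac{t^{\alpha-1}}{1+t}\,dt=\frac{\pi}{\sin(\alpha\pi)}$ (essentially the real-line formula the paper quotes from Gradshteyn--Ryzhik before stating the lemma), verify holomorphy of $F(z)=\frac{\sin(\alpha\pi)}{\pi}\int_0^\infty\frac{zy^{\alpha-1}}{y+z}\,dy$ on the slit plane through the uniform bound $|y+z|\ge c(1+y)$ on compact sets, and then invoke the identity theorem to match $F$ with the principal branch of $z^\alpha$; the reduction of the second equality to the first by $y=u^{1/\alpha}$ and the trivial case $z=0$ are handled the same way in both arguments. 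The trade-off: your continuation argument is shorter and leans on a standard tabulated integral plus soft complex analysis (Morera/dominated convergence plus uniqueness of analytic continuation), whereas the paper's contour computation is self-contained, produces the explicit value of $\int_0^{+\infty}\frac{y^{\alpha-1}}{y+z}\,dy$ for every $z$ off the cut without any continuation step, and sets up the residue machinery reused immediately afterwards in the proof of the companion formula for $z^\alpha\log z$ (Lemma 2.3). Both proofs are complete and rigorous as stated.
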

\begin{proof}
Consider the integral
\begin{align*}
\int_0^{+\infty}\frac{y^{\alpha-1}}{y+z}dy,\ z\in\mathbb{C}\setminus(-\infty,0],\, \alpha\in(0,1).
\end{align*}
With the help of the residue theorem, we have an integral along a closed Jordan contour $\mathfrak{S}:\,\epsilon\rightarrow R\rightarrow\gamma_R\rightarrow R\rightarrow\epsilon\rightarrow\gamma^{-}_\epsilon$ (see {\sc Fig.} \ref{path_for_representation}) in the complex plane split by the positive real line, which reads as
\begin{align}\label{general integral repres111}
\bigg\{\int_{\epsilon}^{R}+\int_{\gamma_R}
+e^{2i\alpha\pi}\int_{R}^{\epsilon}+\int_{\gamma_\epsilon^{-}}\bigg\}
\frac{y^{\alpha-1}}{y+z}dy=2i\pi{\rm Res}\bigg(\frac{y^{\alpha-1}}{y+z},-z\bigg).
\end{align}
We used in \cref{general integral repres111} the fact
$\log{y}|_{y\in[R\rightarrow\epsilon]}
=\log{y}|_{y\in[\epsilon\rightarrow R]}+2i\pi$, which implies that
$$y^{\alpha-1}|_{y\in[R\rightarrow\epsilon]}
=e^{(\alpha-1)\log{y}}|_{y\in[R\rightarrow\epsilon]}
=e^{2i\alpha\pi}y^{\alpha-1}|_{y\in[\epsilon\rightarrow R]}.$$
Here the radii $R$ and $\epsilon$ of $\gamma_R$ and $\gamma_\epsilon$ are chosen as sufficiently large and small, respectively, such that $0<\epsilon<1<R$ and $-z$ locates inside $\mathfrak{S}$.

\begin{figure}[htbp]
\centerline{\includegraphics[width=5cm]{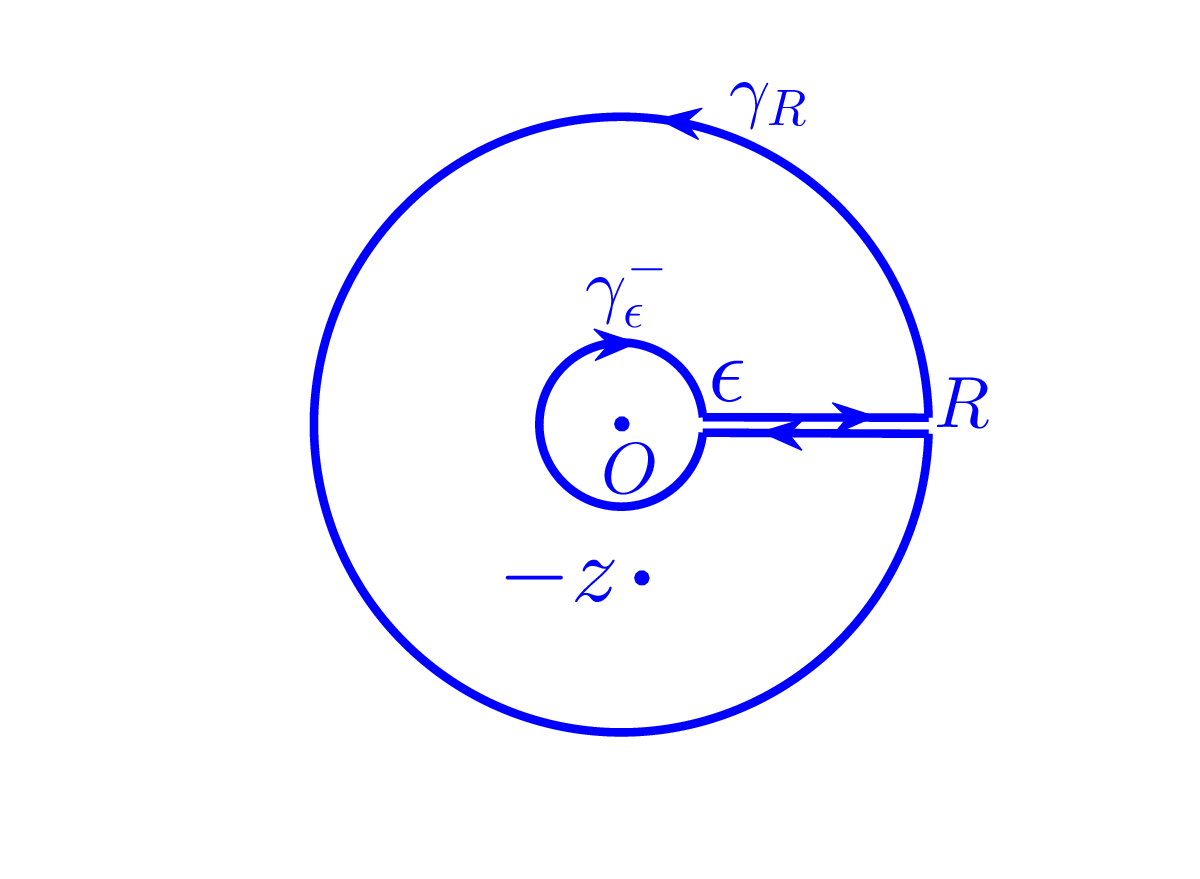}}
\caption{The integral contour $\mathfrak{S}$ of \cref{general integral repres111}.}
\label{path_for_representation}
\end{figure}

Let in \cref{general integral repres111} $R$ tend to $+\infty$ and $\epsilon$ to $0$, we have
\begin{align*}
(1-e^{2i\alpha\pi})\int_0^{+\infty}\frac{y^{\alpha-1}}{y+z}dy
=2i\pi{\rm Res}\bigg(\frac{y^{\alpha-1}}{y+z},-z\bigg)
\end{align*}
due to that
$$\bigg|\int_{\gamma_R}\frac{y^{\alpha-1}}{y+z}dy\bigg|
\le\int_{\gamma_R}\frac{|e^{(\alpha-1)\log{y}}|}{|y|-|z|}ds
\le\frac{R^{\alpha-1}}{R-|z|}2\pi R
$$
approaches to $0$ as $R\rightarrow+\infty$, and
$$\bigg|\int_{\gamma_\epsilon}\frac{y^{\alpha-1}}{y+z}dy\bigg|
\le\frac{\epsilon^{\alpha-1}}{|z|-\epsilon}2\pi\epsilon
$$
tends to $0$ as $\epsilon\rightarrow0$.

By substituting the residue
$${\rm Res}\bigg(\frac{y^{\alpha-1}}{y+z},-z\bigg)
=-e^{i\alpha\pi}z^{\alpha-1}$$
into \cref{general integral repres111}, it follows that
\begin{align*}
\int_0^{+\infty}\frac{y^{\alpha-1}}{y+z}dy
=\frac{-2i\pi e^{i\alpha\pi}z^{\alpha-1}}{1-e^{2i\alpha\pi}}
=\frac{\pi z^{\alpha-1}}{\sin{(\alpha\pi)}},
\end{align*}
then we arrive at the conclusion \cref{eq:cint} for $z\in\mathbb{C}\setminus(-\infty,0)$.  For $z=0$, \cref{eq:cint} is also satisfied. Additionally, we obtain the second equality of \cref{eq:cint} by a change of integral variable $y^\alpha$ by $y$.
\end{proof}

Let $\kappa=\frac{\alpha}{1-\alpha}$.  Following \cite{Herremans2023} and from \cref{eq:cint},  by applying $y=C^{\alpha}e^{t}$, $z^\alpha$ can be rewritten as
\begin{align*}
z^\alpha&=\frac{\sin(\alpha\pi)}{\alpha\pi}\int_0^{+\infty} \frac{z}{y^{\frac{1}{\alpha}}+z}dy
=\frac{\sin(\alpha\pi)}{\alpha\pi}
\left\{\int_{-\infty}^{-T}+\int_{-T}^{\kappa T}+\int_{\kappa T}^{+\infty}\right\}
\frac{zC^{\alpha}e^t}{Ce^{\frac{1}{\alpha}t}+z}dt.\notag
 \end{align*}

For $z=xe^{\pm\frac{\theta\pi}{2}i}\in S_{\beta}$, by $\big{|}Ce^{\frac{1}{\alpha}t}+z\big{|}
=\sqrt{C^2e^{\frac{2}{\alpha}t}+2Cxe^{\frac{1}{\alpha}t}\cos\frac{\theta\pi}{2} +x^2}$ it follows
\begin{align}\label{eq:inequ_neg}
\big{|}Ce^{\frac{1}{\alpha}t}+z\big{|}
\ge \left\{
\begin{array}{ll}
x,&0\le \theta\le 1,\\
x\sin{\frac{\theta\pi}{2}}
\ge x\sin\frac{\beta\pi}{2},&1< \theta\le \beta<2,
\end{array}\right.
 \end{align}
which implies
\begin{align*}
\left|\int_{-\infty}^{-T}
\frac{zC^{\alpha}e^t}{Ce^{\frac{1}{\alpha}t}+z}dt\right|\le C^{\alpha}\int_{-\infty}^{-T}
\frac{xe^t}{x\sin\frac{\beta\pi}{2}}dt=\frac{C^{\alpha}}{\sin\frac{\beta\pi}{2}}e^{-T}.
\end{align*}
While for $t\ge\alpha(2\log{2}-\log{C})$, by $$\big{|}Ce^{(\frac{1}{\alpha}-1)t}+e^{-t}z\big{|}\ge \sqrt{C^2e^{2(\frac{1}{\alpha}-1)t}-2Ce^{(\frac{1}{\alpha}-1)t}e^{-t}}\ge \frac{C}{\sqrt{2}}e^{(\frac{1}{\alpha}-1)t},$$
it derives
\begin{align}\label{eq:inequ}
\frac{|z|}{\big{|}Ce^{(\frac{1}{\alpha}-1)t}+e^{-t}z\big{|}}\le \frac{\sqrt{2}x}{Ce^{(\frac{1}{\alpha}-1)t}}
\le \frac{\sqrt{2}}{Ce^{\frac{1}{\kappa}t}}.
\end{align}
Thus,
it yields for $T=\sqrt{\frac{N_th}{(\kappa+1)^2}}\ge (1-\alpha)(2\log{2}-\log{C})$ that
\begin{align*}
\left|\int_{\kappa T}^{+\infty}
\frac{zC^{\alpha}e^t}{Ce^{\frac{1}{\alpha}t}+z}dt\right|\le \frac{\sqrt{2}}{C^{1-\alpha}}\int_{\kappa T}^{+\infty}
\frac{1}{e^{\frac{1}{\kappa}t}}dt=\frac{\sqrt{2}\kappa}{C^{1-\alpha}}e^{-T}
\end{align*}
and then
\begin{align}\label{eq:intC1}
z^\alpha=&\frac{C^{\alpha}\sin(\alpha\pi)}{\alpha\pi}\int_{-T}^{\kappa T}
\frac{ze^t}{Ce^{\frac{1}{\alpha}t}+z}dt+\mathcal{O}(e^{-T})\notag\\
=&\frac{C^{\alpha}\sin(\alpha\pi)}{\alpha\pi}\int_0^{(\kappa+1)^2T^2}
 \frac{1}{2\sqrt{u}}\frac{ze^{\sqrt{u}-T}}{Ce^{\frac{1}{\alpha}(\sqrt{u}-T)}+z}du+\mathcal{O}(e^{-T})\\
 \approx & r_{N_t}(z) +\mathcal{O}(e^{-T}), \notag
 \end{align}
where $r_{N_t}(z)$ is the discretization of the integral in \cref{eq:intC1} using the trapezoidal rule in $N_t$ quadrature points with  stepsize $h$ given by
\begin{align}\label{eq:ECrat1C}
 r_{N_t}(z)&=\frac{\sin(\alpha\pi)}{2\alpha\pi}h
 \sum_{j=1}^{N_t}\frac{1}{\sqrt{jh}}
\frac{zC^{\alpha}e^{\sqrt{jh}-T}}{Ce^{\frac{1}{\alpha}(\sqrt{jh}-T)}+z}\notag\\
&=\frac{\sin(\alpha\pi)}{2\alpha\pi}\left[
\sum_{j=1}^{N_1}\sqrt{\frac{h}{j}}\frac{p_j|p_j|^\alpha}{z-p_j}
+\left(\sum_{j=N_1+1}^{N_t}\sqrt{\frac{h}{j}}\frac{p_j|p_j|^\alpha}{z-p_j}
+\sum_{j=1}^{N_t}\sqrt{\frac{h}{j}}|p_j|^\alpha\right)\right]\\
&=:r_{N_1}(z)+r_2(z)\notag
 \end{align}
with $N_1={\rm ceil}\big(\frac{N_t}{(\kappa+1)^2}\big)$ and
\begin{align}\label{eq:polcoe}
p_j&=-Ce^{\frac{1}{\alpha}(\sqrt{jh}-T)}
=-Ce^{-\frac{\sqrt{h}}{\alpha}
\big(\sqrt{N_t}/(\kappa+1)-\sqrt{j}\big)},&1\le j\le N_t\\
a_j&=\frac{\sqrt{h}p_j|p_j|^\alpha\sin(\alpha\pi)}{2\sqrt{j}\alpha\pi},&1\le j\le N_1.
\end{align}
It is worth mentioning that in $r_{N_1}$  only  the first $N_1$ poles $p_j\ (1\le j\le N_1)$ are considered. In particular,
compared with  \cref{eq:tapered2}, it implies $h=\sigma^2\alpha^2$.

In addition, $r_2(z)$ in \cref{eq:ECrat1C} can be efficiently approximated  with an exponential convergence rate by a polynomial $b_{N_2}(z)$ with $N_2=\mathcal{O}(\sqrt{N_1})$ from the proof of Runge's approximation theorem \cite[pp. 76-77]{Gaier1987}. Runge's approximation theorem marks the beginning of complex approximation theory.
\begin{theorem}\label{Runge}\cite[1895, Runge]{Gaier1987}
Suppose $K\subset \mathbb{C}$ is compacted, $K^C=\mathbb{C}\setminus K$ is connected, and $f$ is analytic on $K$. Then there exist polynomials $p_n$ such that
\begin{align*}
\lim_{n\rightarrow \infty}\max_{z\in K}|f(z)-p_n(z)|=0.
\end{align*}
\end{theorem}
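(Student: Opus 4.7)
The plan is to prove Runge's theorem by the classical two-step strategy: first produce a rational approximation to $f$ via Cauchy's integral formula over a cycle in the neighborhood of analyticity, and then ``push'' every pole of that rational approximation through the connected open set $K^c$ until it reaches infinity, converting the rational approximation into a polynomial one.

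First I would exploit the hypothesis that $f$ is analytic on the compact set $K$ (meaning $f$ extends analytically to some open neighborhood $U\supset K$). A standard grid construction produces a cycle $\Gamma\subset U\setminus K$ whose winding number equals $1$ at every $z\in K$ and $0$ outside $U$, so Cauchy's integral formula gives $f(z)=\frac{1}{2\pi i}\int_{\Gamma}\frac{f(\zeta)}{\zeta-z}\,d\zeta$ for $z\in K$. Since $\mathrm{dist}(K,\Gamma)>0$, the integrand is jointly continuous on the compact set $K\times\Gamma$, hence a sufficiently fine Riemann sum $R(z)=\sum_j\frac{c_j}{\zeta_j-z}$ approximates $f$ uniformly on $K$ and is a rational function with all poles $\zeta_j\in\Gamma\subset K^c$.

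Next I would establish the pole-pushing lemma: whenever $a,b\in K^c$ satisfy $|a-b|<\mathrm{dist}(b,K)$, the geometric expansion
\begin{align*}
\frac{1}{z-a}=\sum_{n=0}^{\infty}\frac{(a-b)^n}{(z-b)^{n+1}}
\end{align*}
converges uniformly for $z\in K$, so its truncations are rational functions in $\frac{1}{z-b}$ that approximate $\frac{1}{z-a}$ uniformly on $K$. Because $K^c$ is open, connected, and (as $K$ is bounded) contains a neighborhood of $\infty$, each pole $\zeta_j$ of $R$ can be joined in $K^c$ by a polygonal path to a point $b$ with $|b|$ arbitrarily large. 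Covering this path by finitely many balls $B(b_k,\mathrm{dist}(b_k,K)/2)$ and iterating the expansion transports the pole from $\zeta_j$ to $b$ while keeping the accumulated uniform error on $K$ small by taking the truncation orders large enough at each step.

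Finally, once the pole has been pushed to $|b|>2\max_{z\in K}|z|$, the expansion $\frac{1}{z-b}=-\sum_{n=0}^{\infty}z^n/b^{n+1}$ is a uniformly convergent power series on $K$, so each $\frac{1}{z-\zeta_j}$ is itself uniformly approximable on $K$ by polynomials. Combining these with the rational approximant $R$ yields polynomials $p_n$ with $\max_{z\in K}|f(z)-p_n(z)|\to 0$, completing the proof. The main obstacle, as I see it, is bookkeeping: one must choose the chain of auxiliary centers $b_k$, the ball radii, and the truncation orders so that the errors accumulated across the finitely many pole-pushing steps stay below any prescribed tolerance. The connectedness of $K^c$ is precisely what makes this chaining possible; for sets whose complement has bounded components (e.g.\ an annulus), poles trapped in those components cannot be pushed to infinity, and the polynomial approximation conclusion genuinely fails.
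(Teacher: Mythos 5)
Your proposal is correct: it is the classical Runge argument (Cauchy integral over a cycle in a neighborhood of $K$, uniform approximation of the integral by Riemann sums giving a rational function with poles on the cycle, then pole-pushing through the connected, unbounded complement $K^{c}$ and final expansion of $\frac{1}{z-b}$ into a power series once $|b|$ is large). The paper does not prove this statement itself but cites Gaier \cite[pp.~76--77]{Gaier1987}, whose proof is essentially the same scheme you describe, so there is nothing further to reconcile.
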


Based on a sequence of finitely connected domains \cite[pp. 8-9]{Walsh1965}, $p_n$ ($n=1,2\ldots$) are chosen as the interpolation polynomials constructed for the $n+1$ Fekete points on $K$ satisfying
\begin{align*}
\max_{z\in K}|f(z)-p_n(z)|=\mathcal{O}(q^n)
\end{align*}
for some $q\in(0,1)$ independent of $n$.
Then analogous to \cite[pp. 5]{Herremans2023} there is a polynomial $b_{N_2}(z)$ with $N_2=\mathcal{O}(\sqrt{N_1})$
such that
\begin{align}\label{polynomial app}
r_2(z)-b_{N_2}(z)=\mathcal{O}(e^{-T})
\end{align}
uniformly for $z\in S_{\beta}$. Consequently, it holds uniformly for $z\in S_{\beta}$ that
\begin{align}\label{polynomial app1}
r_{N_1}(z)+b_{N_2}(z)=r_{N_t}(z)+\mathcal{O}(e^{-T}).
\end{align}

\subsection{Integral formula and LP for $z^{\alpha}\log{z}$}\label{sec:22}
Furthermore, we may present the integral representation for $z^{\alpha}\log{z}$ similar to Lemma \ref{complex_int_repre}.
\begin{lemma}\label{complex_int_log_repre}
The following holds for all $z\in\mathbb{C}\setminus(-\infty,0)$ and $\alpha\in(0,1)$
\begin{align}\label{eq:cint_log}
z^{\alpha}\log{z}=\frac{\sin{(\alpha\pi)}}{\alpha^2\pi}
\int_{0}^{+\infty}\frac{z\log{y}}{y^{\frac{1}{\alpha}}+z}dy
+\frac{\cos{(\alpha\pi)}}{\alpha}\int_{0}^{+\infty}\frac{zdy}{y^{\frac{1}{\alpha}}+z}.
\end{align}
\end{lemma}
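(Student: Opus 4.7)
The plan is to derive \eqref{eq:cint_log} by differentiating the identity from \cref{complex_int_repre} with respect to the parameter $\alpha$. Starting from the first equality
\begin{align*}
z^{\alpha}=\frac{\sin(\alpha\pi)}{\pi}\int_0^{+\infty}\frac{zy^{\alpha-1}}{y+z}\,dy,
\end{align*}
both sides are real-analytic functions of $\alpha$ on $(0,1)$ for each fixed $z\in\mathbb{C}\setminus(-\infty,0]$. Formally applying $\partial_\alpha$ yields $z^\alpha \log z$ on the left, and on the right, by the product rule,
\begin{align*}
\cos(\alpha\pi)\int_0^{+\infty}\frac{zy^{\alpha-1}}{y+z}\,dy+\frac{\sin(\alpha\pi)}{\pi}\int_0^{+\infty}\frac{zy^{\alpha-1}\log y}{y+z}\,dy.
\end{align*}

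Next I would convert the $y$-integrals into the $u^{1/\alpha}$-form appearing in \eqref{eq:cint_log} by the substitution $y=u^{1/\alpha}$, under which $y^{\alpha-1}dy=\alpha^{-1}du$ and $\log y=\alpha^{-1}\log u$. The first integral then becomes $\alpha^{-1}\int_0^{+\infty}\frac{z}{u^{1/\alpha}+z}du$ and the second becomes $\alpha^{-2}\int_0^{+\infty}\frac{z\log u}{u^{1/\alpha}+z}du$. Substituting back produces exactly the right-hand side of \eqref{eq:cint_log}, so the algebra is essentially bookkeeping once differentiation under the integral is legitimized.

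The main obstacle is therefore the rigorous justification of exchanging $\partial_\alpha$ with the improper integral. For this I would fix $z\in\mathbb{C}\setminus(-\infty,0]$, restrict $\alpha$ to a compact subinterval $[\alpha_0,\alpha_1]\subset(0,1)$, and produce a dominating integrable majorant for $\partial_\alpha\!\left(\tfrac{zy^{\alpha-1}}{y+z}\right)=\tfrac{zy^{\alpha-1}\log y}{y+z}$ on this range. Near $y=0$ the factor $y^{\alpha_0-1}\log y$ is integrable since $\alpha_0>0$, and at infinity the decay $|y^{\alpha_1-1}\log y|/|y+z|=O(y^{\alpha_1-2}\log y)$ is likewise integrable because $\alpha_1<1$, so standard dominated-convergence arguments apply and differentiation under the integral is valid.

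As a backup, if one prefers to avoid the differentiation step, the identity can alternatively be established by mimicking the contour-integral argument used for \cref{complex_int_repre}, now applied to $\frac{y^{\alpha-1}\log y}{y+z}$ along the keyhole contour $\mathfrak{S}$ in {\sc Fig.}~\ref{path_for_representation}. The only change is that crossing the branch cut turns $\log y$ into $\log y+2i\pi$ as well as multiplying $y^{\alpha-1}$ by $e^{2i\alpha\pi}$; the resulting two jump contributions produce a linear combination of $\int_0^{+\infty}\frac{y^{\alpha-1}\log y}{y+z}dy$ and $\int_0^{+\infty}\frac{y^{\alpha-1}}{y+z}dy$ on the left, while the residue at $y=-z$ contributes $e^{i(\alpha-1)\pi}z^{\alpha-1}(\log z+i\pi)$ on the right. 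Solving this linear system and applying the same change of variables $y=u^{1/\alpha}$ recovers \eqref{eq:cint_log}, giving an independent verification of the formula.
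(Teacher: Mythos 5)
Your argument is correct, but your primary route is genuinely different from the paper's. The paper proves \cref{eq:cint_log} by redoing the keyhole-contour computation of \cref{complex_int_repre} with the integrand $\frac{y^{\alpha-1}\log y}{y+z}$: crossing the cut produces the extra $2i\pi$ in $\log y$ together with the factor $e^{2i\alpha\pi}$, the residue at $y=-z$ equals $-e^{i\alpha\pi}\bigl[z^{\alpha-1}\log z+i\pi z^{\alpha-1}\bigr]$, and combining with \cref{eq:cint} and the substitution $y\mapsto y^{1/\alpha}$ yields the stated formula --- this is precisely your ``backup'' argument, and your residue value $e^{i(\alpha-1)\pi}z^{\alpha-1}(\log z+i\pi)$ agrees with the paper's since $e^{i(\alpha-1)\pi}=-e^{i\alpha\pi}$. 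Your main proof instead differentiates the identity of \cref{complex_int_repre} in the parameter $\alpha$; the justification via a dominating majorant for $\frac{zy^{\alpha-1}\log y}{y+z}$ on a compact $[\alpha_0,\alpha_1]\subset(0,1)$ (using $\inf_{y\ge 0}|y+z|>0$ for $z\notin(-\infty,0]$, integrability of $y^{\alpha_0-1}|\log y|$ near $0$ and of $y^{\alpha_1-2}\log y$ at $\infty$) is sound, and the bookkeeping under $y=u^{1/\alpha}$ is right, so the interchange and the resulting identity are valid. The differentiation route buys economy: it reuses \cref{complex_int_repre} wholesale and avoids re-estimating the arcs $\gamma_R$, $\gamma_\epsilon$ with the additional logarithm, at the price of a measure-theoretic interchange lemma; the paper's contour route stays entirely inside the residue-calculus framework it has already set up and needs no appeal to dominated convergence. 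The only loose end in your write-up is trivial: the statement excludes only $(-\infty,0)$, so $z=0$ is included, and you should add (as the paper does in one sentence) that both sides vanish there with the convention $0^{\alpha}\log 0=0$.
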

\begin{proof}
Analogous to the proof of \cref{complex_int_repre} for $z\in\mathbb{C}\setminus(-\infty,0]$, we have
\begin{align*}
\frac{1-e^{2i\alpha\pi}}{2i\pi}\int_{0}^{+\infty}
\frac{y^{\alpha-1}\log{y}}{y+z}dy
=&{\rm Res}\bigg(\frac{y^{\alpha-1}\log{y}}{y+z},-z\bigg)+ e^{2i\alpha\pi}\int_{0}^{+\infty}\frac{y^{\alpha-1}}{y+z}dy\\
=&-e^{i\alpha\pi}[z^{\alpha-1}\log{z}+i\pi z^{\alpha-1}]
+e^{2i\alpha\pi}\int_{0}^{+\infty}\frac{y^{\alpha-1}}{y+z}dy,
\end{align*}
which implies from \cref{eq:cint} that
\begin{align*}
z^{\alpha-1}\log{z}
=&\frac{e^{i\alpha\pi}-e^{-i\alpha\pi}}{2i\pi}
\int_{0}^{+\infty}\frac{y^{\alpha-1}\log{y}}{y+z}dy
+e^{i\alpha\pi}\int_{0}^{+\infty}\frac{y^{\alpha-1}}{y+z}dy
-i\pi z^{\alpha-1}\\
=&\frac{\sin{(\alpha\pi)}}{\pi}
\int_{0}^{+\infty}\frac{y^{\alpha-1}\log{y}}{y+z}dy
+\cos{(\alpha\pi)}\int_{0}^{+\infty}\frac{y^{\alpha-1}}{y+z}dy\\
=&\frac{\sin{(\alpha\pi)}}{\alpha^2\pi}
\int_{0}^{+\infty}\frac{\log{y}}{y^{\frac{1}{\alpha}}+z}dy
+\frac{\cos{(\alpha\pi)}}{\alpha}\int_{0}^{+\infty}\frac{dy}{y^{\frac{1}{\alpha}}+z}.
\end{align*}
Thus we arrive at \cref{eq:cint_log} for $z\in\mathbb{C}\setminus(-\infty,0]$. Obviously, \cref{eq:cint_log} also holds for $z=0$.
\end{proof}

Similarly to \cref{eq:intC1}, using \cref{eq:inequ_neg} and \cref{eq:inequ}, and denoting $\chi=\frac{C^{\alpha}\sin{(\alpha\pi)\log C}}{\alpha\pi}
+\frac{C^{\alpha}\cos{(\alpha\pi)}}{\alpha}$, we may rewrite
$z^\alpha\log z$ as
\allowdisplaybreaks
\begin{align}\label{eq:intCPerr}
z^\alpha\log{z}
=&\frac{\sin(\alpha\pi)}{\alpha^2\pi}
\int_{-\infty}^{+\infty}\frac{zC^{\alpha}te^t}{Ce^{\frac{1}{\alpha}t}+z}dt
+\chi\int_{-\infty}^{+\infty}\frac{ze^t}{Ce^{\frac{1}{\alpha}t}+z}dt
\notag\\
=&\frac{\sin(\alpha\pi)}{\alpha^2\pi}
\left\{\int_{-\infty}^{-T}+\int_{-T}^{\kappa T}+\int_{\kappa T}^{+\infty}\right\} \frac{zC^{\alpha}te^t}{Ce^{\frac{1}{\alpha}t}+z}dt\notag\\
&+\chi\left\{\int_{-\infty}^{-T}+\int_{-T}^{\kappa T}+\int_{\kappa T}^{+\infty}\right\}
\frac{ze^t}{Ce^{\frac{1}{\alpha}t}+z}dt\notag\\
=&\frac{\sin(\alpha\pi)}{\alpha^2\pi}\int_{-T}^{\kappa T} \frac{zC^{\alpha}te^t}{Ce^{\frac{1}{\alpha}t}+z}dt
+\chi\int_{-T}^{\kappa T}
\frac{ze^t}{Ce^{\frac{1}{\alpha}t}+z}dt+\mathcal{O}(Te^{-T})\notag\\
=&\frac{\sin(\alpha\pi)}{\alpha^2\pi}\int_0^{(\kappa+1)^2T^2}
 \frac{\sqrt{u}-T}{2\sqrt{u}}\frac{zC^{\alpha}e^{\sqrt{u}-T}}
 {Ce^{\frac{1}{\alpha}(\sqrt{u}-T)}+z}du\\
& +\chi\int_0^{(\kappa+1)^2T^2}
 \frac{1}{2\sqrt{u}}\frac{ze^{\sqrt{u}-T}}{Ce^{\frac{1}{\alpha}(\sqrt{u}-T)}+z}du
 +\mathcal{O}(Te^{-T})\notag\\
=&\frac{\sin(\alpha\pi)}{2\alpha^2\pi}\int_0^{(\kappa+1)^2T^2}
\frac{zC^{\alpha}e^{\sqrt{u}-T}}{Ce^{\frac{1}{\alpha}(\sqrt{u}-T)}+z}du
+\mathcal{O}(Te^{-T})\notag\\
& +\left(\chi-\frac{T\sin(\alpha\pi)}{\alpha^2\pi C^{-\alpha}}\right)\int_0^{(\kappa+1)^2T^2}
\frac{1}{2\sqrt{u}}\frac{ze^{\sqrt{u}-T}}{Ce^{\frac{1}{\alpha}(\sqrt{u}-T)}+z}du\notag\\
\approx & \widetilde{r}_{N_t}(z) +\mathcal{O}(Te^{-T}), \notag
\end{align}
where  $\widetilde{r}_{N_t}(z)$ is the discretization  of the integral in \cref{eq:intCPerr} using the trapezoidal rule in $N_t$ quadrature points with stepsize $h$ given by
\allowdisplaybreaks
\begin{align}\label{eq:ECrat2C}
\widetilde{r}_{N_t}(z)=&\frac{h\sin(\alpha\pi)}{2\alpha^2\pi}
  \sum_{j=1}^{N_t}
\frac{zC^{\alpha}e^{\sqrt{jh}-T}}{Ce^{\frac{1}{\alpha}(\sqrt{jh}-T)}+z}\notag\\
&+ \frac{1}{2}\left(\chi-\frac{T\sin(\alpha\pi)}{\alpha^2\pi C^{-\alpha}}\right)
 \sum_{j=1}^{N_t}\sqrt{\frac{h}{j}}
\frac{ze^{\sqrt{jh}-T}}{Ce^{\frac{1}{\alpha}(\sqrt{jh}-T)}+z}\notag\\
=&\left[\frac{h\sin(\alpha\pi)}{2\alpha^2\pi}
\sum_{j=1}^{N_1}\frac{p_j|p_j|^\alpha}{z-p_j}+
\frac{1}{2}
\left(\frac{\chi}{C^{\alpha}}-\frac{T\sin(\alpha\pi)}{\alpha^2\pi}\right)
\sum_{j=1}^{N_1}\sqrt{\frac{h}{j}}\frac{p_j|p_j|^\alpha}{z-p_j}\right]\\
&+\left[\frac{h\sin(\alpha\pi)}{2\alpha^2\pi}\bigg(\sum_{j=N_1+1}^{N_t}
\frac{p_j|p_j|^\alpha}{z-p_j}
+\sum_{j=1}^{N_t}|p_j|^\alpha\bigg)\right.\notag\\
&+\left.
\frac{1}{2}
\left(\frac{\chi}{C^{\alpha}}-\frac{T\sin(\alpha\pi)}{\alpha^2\pi}\right)
\bigg(\sum_{j=N_1+1}^{N_t}\sqrt{\frac{h}{j}}\frac{p_j|p_j|^\alpha}{z-p_j}
+\sum_{j=1}^{N_t}\sqrt{\frac{h}{j}}|p_j|^\alpha\bigg)\right]\notag\\
=&:\widetilde{r}_{N_1}(z)+\widetilde{r}_2(z),\notag
\end{align}
and in exactly the same manner, $\widetilde{r}_2(z)$ in \cref{eq:ECrat2C} can also be efficiently approximated by a polynomial $\widetilde{b}_{N_2}(z)$ with $N_2=\mathcal{O}(\sqrt{N_1})$ such that
\begin{align}\label{polynomial app_log}
\widetilde{r}_2(z)-\widetilde{b}_{N_2}(z)=\mathcal{O}(e^{-T})
\end{align}
uniformly for $z\in S_{\beta}$. Then it also holds uniformly for $z^\alpha \log z$ and $z\in S_{\beta}$ that
\begin{align}\label{polynomial app1_log}
\widetilde{r}_{N}(z):=\widetilde{r}_{N_1}(z)+\widetilde{b}_{N_2}(z)
=\widetilde{r}_{N_t}(z)+\mathcal{O}(e^{-T}).
\end{align}

\subsection{LPs extend to $g(z)z^{\alpha}$ and  $g(z)z^{\alpha}\log{z}$}\label{sec:23}
Suppose $g(z)$ is an analytic function on $S_\beta$. From \cref{eq:intC1} and \cref{eq:intCPerr}, we see that
\begin{align}\label{polynomial app1_ge}
g(z)z^\alpha  &= g(z)r_{N_1}(z)+g(z)r_2(z)+\mathcal{O}(e^{-T}),\\
g(z)z^\alpha\log z  &= g(z)\widetilde{r}_{N_1}(z)+g(z)\widetilde{r}_2(z)+\mathcal{O}(Te^{-T}).
\end{align}
Similarly from the proof of Runge's approximation theorem \cite{Gaier1987},
$g(z)$, $g(z) r_2(z)$ and $g(z)\widetilde{r}_2(z)$ can be efficiently approximated with exponential convergence rates by  polynomials $b^g(z)$, $b^g_{N_2}(z)$ and $\widetilde{b}^g_{N_2}(z)$ with error bound $\mathcal{O}(e^{-T})$ and degree $N_2=\mathcal{O}(\sqrt{N_1})$ similar to \cite[pp. 5]{Herremans2023}, respectively.

Moreover, notice that $b^g(z)r_{N_1}(z)$ and $b^g(z)\widetilde{r}_{N_1}(z)$ can be written in the form of $\sum_{j=1}^{N_1}\frac{a_j}{z-z_j}$ for some $a_j$ due to $N_2<N_1$. Then Theorems \ref{mainthm} and \ref{mainthm2} also hold for  $g(z)z^{\alpha}$ and  $g(z)z^{\alpha}\log{z}$, respectively.

\section{Convergence rates of quadratures on $r_{N_t}(z)$ and $\widetilde{r}_{N_t}(z)$ for $z\in S_\beta$}\label{sec:3}
From \cref{eq:intC1}, \cref{eq:ECrat1C}, \cref{polynomial app1}, \cref{eq:intCPerr}, \cref{eq:ECrat2C} and \cref{polynomial app1_log}, we are only necessary to focus on the quadrature errors on $r_{N_t}(z)$ and $\widetilde{r}_{N_t}(z)$, from which we may establishes Theorem \ref{mainthm} and Theorem \ref{mainthm2}.

Let $T =\frac{\sqrt{N_t h}}{\kappa+1}$ and for $z\in S_{\beta}$ define
\begin{align}
f(u,z)=&\frac{\sin(\alpha\pi)}{\alpha\pi}
\frac{1}{2\sqrt{u}}\frac{zC^{\alpha}e^{\sqrt{u}-T}}
{Ce^{\frac{1}{\alpha}(\sqrt{u}-T)}+z},\label{eq:func}\\
I(z)=&
\int_0^{(\kappa+1)^2T^2}f(u,z)du,\label{eq:quadraturec}\\
I_{log}(z)=&\frac{1}{\alpha}
\int_0^{(\kappa+1)^2T^2}(\sqrt{u}-T)f(u,z)du+\frac{\alpha\pi\chi}{\sin(\alpha\pi)} \int_0^{(\kappa+1)^2T^2}f(u,z)du\label{eq:quadratureclog}
\end{align}
In the following, we show that the quadrature errors satisfy  uniformly for $z\in S_\beta$ that
\begin{align}\label{eq:err}
I(z)-r_{N_t}(z)=\left\{\begin{array}{ll}
\mathcal{O}(e^{-T}),&\sigma\le \sigma_{opt},\\
\mathcal{O}(e^{-\pi\eta\sqrt{2(2-\beta)N\alpha}}),&\sigma> \sigma_{opt},
\end{array}\right.
\end{align}
\begin{align}\label{eq:errlog}
I_{log}(z)-\widetilde{r}_{N_t}(z)=\left\{\begin{array}{ll}
\mathcal{O}(Te^{-T}),&\sigma\le \sigma_{opt},\\
\mathcal{O}(e^{-\pi\eta\sqrt{2(2-\beta)N\alpha}}),&\sigma> \sigma_{opt},
\end{array}\right.
\end{align}
where $\eta=\frac{\sigma_{opt}}{\sigma}$ and the constants in the $\mathcal{O}$ terms are independent of $T$ and $z\in S_\beta$.

Set $z^{\pm}=xe^{\pm\frac{\theta\pi}{2}i}$ with $x\in[0,1]$ and $\theta\in[0,\beta]$.
In most settings, we discuss only the case of $z^+=xe^{\frac{\theta\pi}{2}i}$, and conclusions related to the case $z^-=xe^{-\frac{\theta\pi}{2}i}$ can be obtained by the same approach.

Notice that
$f(u,z^{\pm})=f(u,xe^{\pm\frac{\theta\pi}{2}i})$ has the simple poles
\begin{equation}\label{eq:all_poles_fux_C}
u_k(z^{\pm})=\big[T+\alpha\log{\frac{x}{C}}
+i\alpha\pi\big(2k-1\pm\frac{\theta}{2}\big)\big]^2,
\end{equation}
where $k=0,\pm1,\ldots$, among which the first two closest to the real axis are
$u_0(z^+)$, $u_1(z^+)$ (for $f(u,z^+)$) and $u_1(z^{-})$, $u_0(z^{-})$ (for $f(u,z^-)$).
For brevity, we denote them by $u_0$, $u_1$ (and $u^{-}_0$, $u^{-}_1$, respectively) with
\begin{align}\label{eq:u_0andu_1}
u_0=v_0-ia_0,\ u_1=v_1+ia_1,\ u_0^{-}=v_1-ia_1,\ u_1^{-}=v_0+ia_0,
\end{align}
where
\begin{align}
v_0=&(\alpha\log{\frac{x}{C}}+T)^2-\frac{1}{4}(2-\theta)^2\alpha^2\pi^2,\hspace{.5cm}
a_0=(2-\theta)\alpha\pi\big(\alpha\log{\frac{x}{C}}+T\big),\label{eq:real_imag_part_v0a0}\\
v_1=&(\alpha\log{\frac{x}{C}}+T)^2-\frac{1}{4}(2+\theta)^2\alpha^2\pi^2,\hspace{.5cm}
a_1=(2+\theta)\alpha\pi\big(\alpha\log{\frac{x}{C}}+T\big).\label{eq:real_imag_part_v1a1}
\end{align}


\subsection{Uniform bounds of quadrature errors near $z=0$}\label{subsec:3.1}
At first we show that both $I(z)$ and $r_{N_t}(z)$ are bounded by $\mathcal{O}(e^{-T})$, while $I_{log}(z)$ and $\widetilde{r}_{N_t}(z)$  by $\mathcal{O}(Te^{-T})$  for $z=xe^{\pm\frac{\theta\pi}{2}i}\in S_\beta$ in the vicinity of the origin.

Let $M_0$  be a positive integer such that
\begin{equation}\label{eq:real}
\alpha\pi\sqrt{M_0h}\ge \max\left\{h,\sqrt{2}\alpha\pi, 2\sqrt{6}\alpha^2\pi^2, \alpha\pi\big(\sqrt{(4+\beta)\alpha\pi/2}+\sqrt[4]{4h}\big)^2\right\}.
\end{equation}
and define
\begin{equation}\label{eq:realM0}
c_0=\sqrt{M_0h+\frac{1}{4}(2-\beta)^2\alpha^2\pi^2+\delta_0},\quad  x^*=Ce^{\frac{1}{\alpha}(c_0-T)},
\end{equation}
where $\delta_0$ is a nonnegative number such that $c_0^2\not=jh$ for $j=1,2,\ldots$.

\begin{lemma}\label{la31}
Let $z=xe^{\pm\frac{\theta\pi}{2}i}$ and $0\le\theta\le \beta<2$. Then the quadrature errors satisfy that
\begin{align}\label{eq:qerr}
\big{|} I(z)-r_{N_t}(z)\big{|}=\mathcal{O}(e^{-T}),\quad \big{|} I_{log}(z)-\widetilde{r}_{N_t}(z)\big{|}=\mathcal{O}(Te^{-T})
\end{align}
hold uniformly  for $x\in[0,x^*]$ and  $\theta\in[0,\beta]$.
\end{lemma}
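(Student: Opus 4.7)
The interval $x\in[0,x^*]$ is a neighbourhood of the origin in which $x$ itself is exponentially small in $T$: since $c_0$ is fixed while $T\to\infty$, one has $x^{\alpha}\le (x^*)^{\alpha}=C^{\alpha}e^{c_0-T}=\mathcal{O}(e^{-T})$. The plan is therefore to bound each of $|I(z)|$, $|r_{N_t}(z)|$, $|I_{log}(z)|$, $|\widetilde{r}_{N_t}(z)|$ \emph{individually} by $\mathcal{O}(x^{\alpha})$ (respectively $\mathcal{O}(Tx^{\alpha})$) and then conclude by the triangle inequality, bypassing any Poisson-summation or residue calculus in this near-origin regime.

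\textbf{A uniform lower bound on the denominator.} For $z=xe^{\pm i\theta\pi/2}$ with $\theta\in[0,\beta]$ and any $w\ge 0$, a direct minimisation over $w$ shows that
\[
\frac{|w+z|^{2}}{(w+x)^{2}}=\frac{w^{2}+2wx\cos(\theta\pi/2)+x^{2}}{w^{2}+2wx+x^{2}}
\]
attains its minimum at $w=x$, where it equals $\cos^{2}(\theta\pi/4)$. Setting $w=Ce^{(\sqrt{u}-T)/\alpha}$ yields the uniform geometric estimate
\[
\bigl|Ce^{(\sqrt{u}-T)/\alpha}+z\bigr|\;\ge\;c(\beta)\bigl(Ce^{(\sqrt{u}-T)/\alpha}+x\bigr),\qquad c(\beta):=\cos(\beta\pi/4)>0,
\]
which is valid throughout $S_{\beta}$ precisely because $\beta<2$.

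\textbf{Bounding $I(z)$ and $r_{N_t}(z)$.} Change variables to $s=\sqrt{u}-T$ (so $du/(2\sqrt{u})=ds$) and split the range $[-T,\kappa T]$ at the threshold $s^{*}=\alpha\log(x/C)\le c_0-T$. On $s\le s^{*}$ one has $Ce^{s/\alpha}\le x$, and the denominator bound above gives $|f(u,z)|\le C_{\beta}C^{\alpha}e^{s}/\sqrt{u}$; on $s\ge s^{*}$ one has $Ce^{s/\alpha}\ge x$, and $|f(u,z)|\le C_{\beta}xC^{\alpha-1}e^{-s/\kappa}/\sqrt{u}$. Integrating each piece explicitly produces the same closed-form value $x^{\alpha}$ (plus an $\mathcal{O}(e^{-T})$ remainder coming from the endpoints $-T$ and $\kappa T$), so $|I(z)|\le C_{1}x^{\alpha}+\mathcal{O}(e^{-T})=\mathcal{O}(e^{-T})$. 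For the trapezoidal sum the prefactor $h/(2\sqrt{jh})$ is comparable, up to a universal constant, to the $s$-spacing $\Delta s_{j-1}=\sqrt{jh}-\sqrt{(j-1)h}$, so a standard monotone sum-to-integral comparison (upper Riemann sums for the decreasing $e^{-s/\kappa}$, together with the crude bound $e^{s_{j}}\le e^{\sqrt{h}}e^{s_{j-1}}$ that handles the increasing $e^{s}$) gives $|r_{N_t}(z)|=\mathcal{O}(x^{\alpha})=\mathcal{O}(e^{-T})$. The triangle inequality then delivers $|I(z)-r_{N_t}(z)|=\mathcal{O}(e^{-T})$ uniformly in $\theta\in[0,\beta]$.

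\textbf{The logarithmic case and the main obstacle.} The integrand of $I_{log}(z)$ differs from that of $I(z)$ by the extra factor $(\sqrt{u}-T)/\alpha=s/\alpha$ in the first summand together with a bounded constant multiple of $f(u,z)$. Since $|s|\le(\kappa+1)T$ on the range of integration, integrating by parts against $e^{s}$ or $e^{-s/\kappa}$ produces an extra linear factor of $T$ in each closed-form evaluation, so the analogous estimate reads $|I_{log}(z)|=\mathcal{O}(Tx^{\alpha})+\mathcal{O}(Te^{-T})=\mathcal{O}(Te^{-T})$, and the matching bound for $\widetilde{r}_{N_t}(z)$ follows by repeating the same sum-to-integral comparison term by term. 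The only non-routine ingredient in the whole argument is the uniform constant $c(\beta)$ isolated above: it degenerates as $\beta\to 2$, which is precisely why the hypothesis $\beta<2$ enters every estimate. Everything else is elementary monotone-function bookkeeping, which is why no contour-deformation or Poisson-summation machinery is invoked here — those tools are reserved for the companion lemmas on $x\in[x^{*},1]$, where the poles $u_{k}(z)$ approach the quadrature nodes $jh$.
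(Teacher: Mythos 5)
Your proposal is correct, and its high-level structure is the same as the paper's: on $[0,x^*]$ both arguments bound $I(z)$, $r_{N_t}(z)$, $I_{log}(z)$ and $\widetilde{r}_{N_t}(z)$ individually and finish with the triangle inequality, the logarithmic case picking up the factor $T$ from $|\sqrt{u}-T|\le(\kappa+1)T$, with no Poisson summation or contour deformation in this regime. Where you genuinely differ is the key denominator estimate. The paper rewrites $z/(Ce^{(\sqrt{u}-T)/\alpha}+z)$ as $1/(e^{(\sqrt{u}-\sqrt{u_0})/\alpha}-1)$ and works with the pole $u_0$: it uses $\Re(\sqrt{u_0})=T+\alpha\log\frac{x}{C}\le c_0$ and $\Im(\sqrt{u_0})=\pi\alpha(1-\theta/2)$, the lower bound $|e^{(\sqrt{u}-\sqrt{u_0})/\alpha}-1|\ge 2e^{(\sqrt{u}-\Re(\sqrt{u_0}))/(2\alpha)}\sin\frac{(2-\theta)\pi}{4}$ on a bounded range and $e^{(\sqrt{u}-c_0)/\alpha}-1$ beyond it, splitting at roughly $u=(c_0+h)^2$, and obtains $\mathcal{O}(e^{-T})$ directly with constants involving $e^{c_0}$. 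You instead prove the single elementary inequality $|w+z|\ge\cos(\beta\pi/4)\,(w+x)$ for all $w\ge0$ (note $\cos(\beta\pi/4)=\sin\frac{(2-\beta)\pi}{4}$, the very constant the paper ends up with), split at the crossover $Ce^{s/\alpha}=x$, and get the sharper intermediate bound $\mathcal{O}(x^{\alpha})$, which $x\le x^*$ converts to $\mathcal{O}(e^{-T})$; this avoids the $u_0$/$c_0$ bookkeeping and half-angle manipulations, while the paper's formulation has the side benefit of reusing the same $u_0$-based quantities in the subsequent analysis for $x\in[x^*,1]$. Two details you should make explicit in a write-up: when $x$ is so small that $s^*=\alpha\log\frac{x}{C}<-T$, the whole range lies in the decaying regime and one closes with $\frac{\kappa x}{C}e^{T/\kappa}\le\kappa e^{-T}$ (since $\frac{1}{\alpha}-\frac{1}{\kappa}=1$); and in $\widetilde{r}_{N_t}$ the first sum carries the weight $h$ rather than $h/(2\sqrt{jh})$, so the factor $2\sqrt{jh}\le 2(\kappa+1)T$ must be absorbed into the $\mathcal{O}(T)$ — which is exactly the extra linear factor of $T$ you announce.
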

\begin{proof}
For the case $x=0$, \cref{eq:qerr} holds obviously.
For $z=xe^{\frac{\theta\pi}{2}i}$, $x\in(0,x^*]$ and $0\le \theta\le \beta$,  setting $u_0=v_0-ia_0=r_0e^{i\Theta_0}$ with $r_0=\sqrt{v_0^2+a_0^2}$ and $\Theta_0\in[0,2\pi)$,
it is easy to show by the Euler formula and the half angle formulae that
\begin{align}\label{eq:realp}
\left|\Re\big(\sqrt{u_0}\big)\right|
=&\sqrt{r_0}\left|\cos{\frac{\Theta_0}{2}}\right|=\sqrt{\frac{\sqrt{v_0^2+a_0^2}
+v_0}{2}}
=\left|T+\alpha\log{\frac{x}{C}}\right|,
\\
\left|\Im\big(\sqrt{u_0}\big)\right|
=&\sqrt{r_0}\sin{\frac{|\Theta_0|}{2}}=\sqrt{\frac{\sqrt{v_0^2+a_0^2}-v_0}{2}}
=\pi\alpha\big(1-\frac{\theta}{2}\big).\label{eq:imagp}
\end{align}
Noticing that $x\in(0,x^*]$, then $T+\alpha\log{\frac{x}{C}}\le c_0$,  and by \cref{eq:realp} it leads to
\begin{align}\label{bound_sq_disc}
\sqrt{jh}-\Re\big(\sqrt{u_0}\big)\ge \left\{\begin{array}{ll}
\sqrt{jh},&\Re\big(\sqrt{u_0}\big)\le 0\\
\sqrt{jh}-c_0,&\Re\big(\sqrt{u_0}\big)> 0\end{array}\right\}
\ge \sqrt{jh}-c_0,
\end{align}
while for $t\ge c_0^2$
\begin{align}\label{bound_sqr_t}
\left|e^{\frac{1}{\alpha}(\sqrt{t}-\sqrt{u_0})}-1\right|\ge e^{\frac{1}{\alpha}(\sqrt{t}-c_0)}-1.
\end{align}
Thus by \cref{eq:ECrat1C}, \cref{eq:all_poles_fux_C} and \cref{bound_sq_disc},
together with the monotonicity of $\frac{e^t}{e^{\frac{1}{\alpha}(t-c_0)}-1}$ for $t\in (c_0,+\infty)$, we get
\begin{align}
|r_{N_t}(z)|=&\frac{\sin{(\alpha\pi)}}{\alpha\pi}\left|h\sum_{j=1}^{N_t}\frac{1}{2\sqrt{jh}}
\frac{C^{\alpha}e^{-T}e^{\sqrt{jh}}}{e^{\frac{1}{\alpha}(\sqrt{jh}-\sqrt{u_0})}-1}\right|\notag\\
\le&\sum_{jh<(c_0+2h)^2}\frac{he^{-T}}{2\sqrt{jh}}
\frac{C^{\alpha}e^{\sqrt{jh}}}{|1-e^{\frac{1}{\alpha}(\sqrt{jh}-\sqrt{u_0})}|}
+\sum_{jh\ge(c_0+2h)^2}\frac{he^{-T}}{2\sqrt{jh}}
\frac{C^{\alpha}e^{\sqrt{jh}}}{e^{\frac{he^{-T}}{\alpha}(\sqrt{jh}-c_0)}-1}\notag\\
\le&\sum_{jh<(c_0+2h)^2}\frac{he^{-T}}{4\sqrt{jh}}
\frac{C^{\alpha}e^{\sqrt{jh}}}{e^{\frac{1}{2\alpha}\Re(\sqrt{jh}-\sqrt{u_0})}\sin{\frac{(2-\theta)\pi}{4}}}
+\int_{(c_0+h)^2}^{+\infty}
\frac{e^{-T}C^{\alpha}e^{\sqrt{u}}}{e^{\frac{1}{\alpha}(\sqrt{u}-c_0)}-1}d\sqrt{u}\label{eq:trapC}\\
\le& e^{-T}\left[\sum_{jh<(c_0+2h)^2}\frac{h}{4\sqrt{jh}}
\frac{C^{\alpha}e^{\sqrt{jh}}}{e^{-c_0/(2\alpha)}\sin{\frac{(2-\beta)\pi}{4}}}+e^{c_0}\int_h^{+\infty}
\frac{C^{\alpha}e^{t}}{e^{\frac{1}{\alpha}t}-1}dt\right],\notag
\end{align}
and by \cref{eq:func} and \cref{bound_sqr_t},
\allowdisplaybreaks
\begin{align}\label{eq:intappC}
|I(z)|
=&\bigg|\int_{0}^{(\kappa+1)^2T^2}f(u,z)du\bigg|\notag\\
\le&\frac{\sin(\alpha\pi)}{\alpha\pi}\left\{\int_0^{(c_0+h)^2}
+\int_{(c_0+h)^2}^{(\kappa+1)^2T^2}\right\}\bigg{|}\frac{C^{\alpha}}{2\sqrt{u}}
\frac{e^{\sqrt{u}-T}}{e^{\frac{1}{\alpha}(\sqrt{u}-\sqrt{u_0})}-1}\bigg{|}du\notag\\
\le &\int_0^{(c_0+h)^2}
\frac{C^{\alpha}e^{\sqrt{u}-T}}{\big|e^{\frac{1}{\alpha}(\sqrt{u}-\sqrt{u_0})}-1\big|}d\sqrt{u}
+\int_{(c_0+h)^2}^{(\kappa+1)^2T^2}
\frac{C^{\alpha}e^{\sqrt{u}-T}}{e^{\frac{1}{\alpha}(\sqrt{u}-c_0)}-1}d\sqrt{u}\\
\le&e^{-T}
\int_0^{(c_0+h)^2}\frac{C^{\alpha}e^{\sqrt{u}}}
{2e^{\frac{1}{2\alpha}\Re(\sqrt{u}-\sqrt{u_0})}\sin{\frac{(2-\theta)\pi}{4}}}d\sqrt{u}
+e^{c_0-T}\int_{h}^{+\infty}
\frac{C^{\alpha}e^{t}}{e^{\frac{1}{\alpha}t}-1}dt\notag\\
\le&e^{-T}\left[
\frac{C^{\alpha}}{2e^{-c_0/(2\alpha)}\sin{\frac{(2-\beta)\pi}{4}}}
\int_0^{c_0+h}e^tdt+e^{c_0}\int_{h}^{+\infty}
\frac{C^{\alpha}e^t}{e^{\frac{1}{\alpha}t}-1}dt\right],\notag
\end{align}
which together yield $|I(z)-r_{N_t}(z)|=\mathcal{O}(e^{-T})$ uniformly for $z=xe^{\frac{\theta\pi}{2}i}$ with $x\in(0,x^*]$ and $\theta\in [0, \beta]$. Here in \cref{eq:trapC} and \cref{eq:intappC}, we used \cref{eq:imagp} and the fact that 
\begin{align*}\label{eq:dominant}
&\big|e^{\frac{1}{\alpha}(\sqrt{u}-\sqrt{u_0})}-1\big|
=\big{|}e^{\frac{1}{\alpha}(\sqrt{u}-\Re(\sqrt{u_0}))}
e^{-\frac{i}{\alpha}\Im(\sqrt{u_0})}-1\big{|}\notag\\
=&\sqrt{\big[e^{\frac{1}{\alpha}(\sqrt{u}-\Re(\sqrt{u_0}))}-1\big]^2
+2e^{\frac{1}{\alpha}(\sqrt{u}-\Re(\sqrt{u_0}))}
\left[1-\cos\left(\frac{1}{\alpha}\Im(\sqrt{u_0})\right)\right]}\\
\ge& 2e^{\frac{1}{2\alpha}\big(\sqrt{u}-\Re(\sqrt{u_0})\big)}
\sin\frac{\big|\Im\big(\sqrt{u_0}\big)\big|}{2\alpha}
=2e^{\frac{1}{2\alpha}\big(\sqrt{u}-\Re(\sqrt{u_0})\big)}
\sin{\frac{(2-\theta)\pi}{4}}\notag
\end{align*}
and $\sqrt{u}-\Re(\sqrt{u_0})\ge -c_0$ for  $u\in[0,(c_0+h)^2]$,  and $\sqrt{u}-\Re(\sqrt{u_0})\ge \sqrt{u}-c_0$ and $\big{|}e^{\frac{1}{\alpha}(\sqrt{u}-\Re(\sqrt{u_0}))}-1\big{|}\ge \big{|}e^{\frac{1}{\alpha}(\sqrt{u}-\Re(\sqrt{u_0}))}\big{|}-1\ge e^{\frac{1}{\alpha}(\sqrt{u}-c_0)}-1$
for  $u\in[(c_0+h)^2,+\infty)$.

Analogously, \cref{eq:qerr} holds for $z=xe^{-\frac{\theta\pi}{2}i}$ with
$x\in[0,x^*]$ and $\theta\in[0,\beta]$.

On the other hand, from \cref{eq:quadratureclog} together with the above analysis on $f(u,z)$,  we have for $z=xe^{\pm\frac{\theta\pi}{2}i}$ with $x\in[0,x^*]$ and $\theta\in[0,\beta]$ that
\begin{align*}
|I_{log}(z)|=&\bigg|\frac{1}{\alpha}
\int_0^{(\kappa+1)^2T^2}(\sqrt{u}-T)f(u,z)du\bigg|+\mathcal{O}(e^{-T})\\
=&\bigg|\frac{1}{\alpha}\int_0^{(\kappa+1)^2T^2}
\sqrt{u}f(u,z)du\bigg|+\mathcal{O}(Te^{-T})\\
\le &\frac{(\kappa+1)T}{\alpha}\int_0^{(\kappa+1)^2T^2}
|f(u,z)|du+\mathcal{O}(Te^{-T})=\mathcal{O}(Te^{-T}).
\end{align*}
Similarly from \cref{eq:ECrat2C}, by analogous arguments to \cref{eq:trapC}, we obtain $\widetilde{r}_{N_t}(z)=\mathcal{O}(Te^{-T})$, which leads to the desired uniform bound $I_{log}(z)-\widetilde{r}_{N_t}(z)=\mathcal{O}(Te^{-T})$ too.
\end{proof}

{\sc Fig.} \ref{Trapzoid_rule_nearorigin_vdomains} illustrates the behaviors of quadrature errors $\|I-r_{N_t}\|_{\infty}$ and $\|I_{log}-\widetilde{r}_{N_t}\|_{\infty}$ for $z$ in the vicinity of the original point.

\begin{figure}[htbp]
\centerline{\includegraphics[width=12cm]{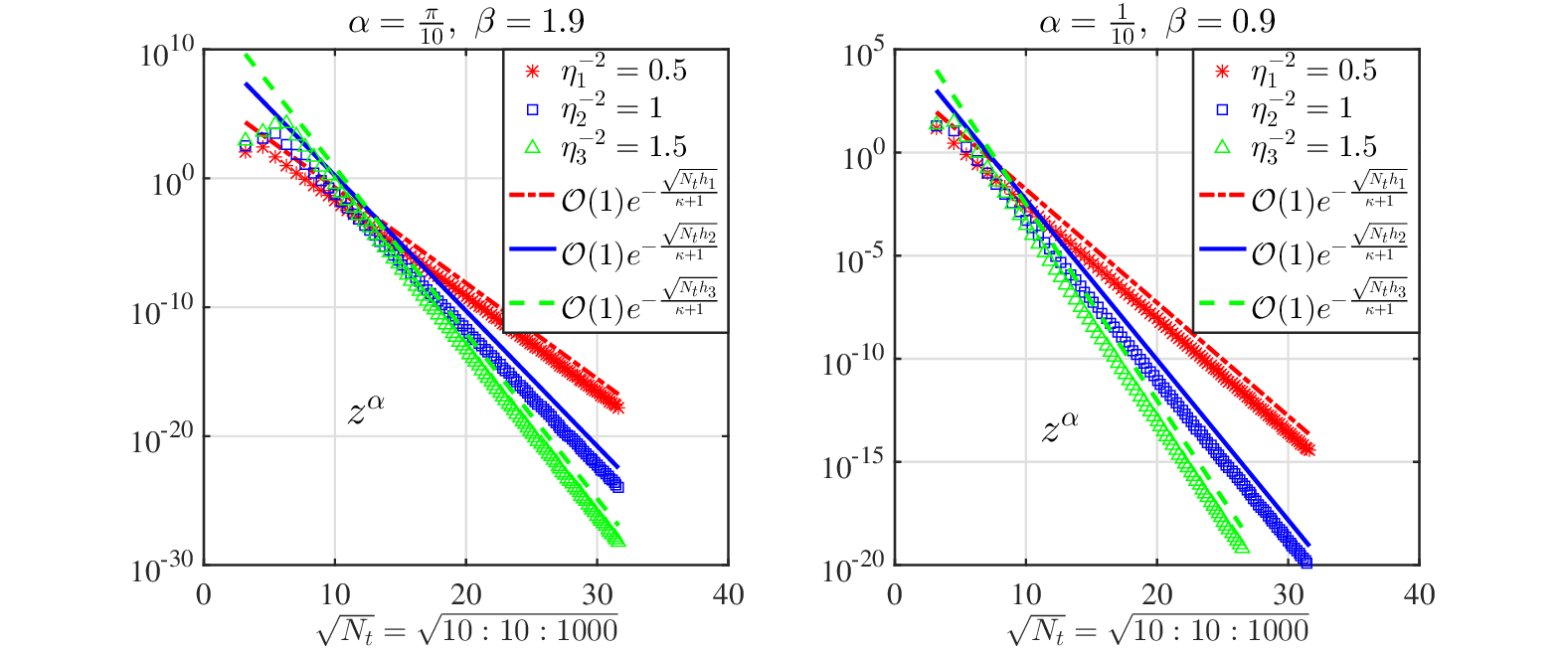}}
\centerline{\includegraphics[width=12cm]{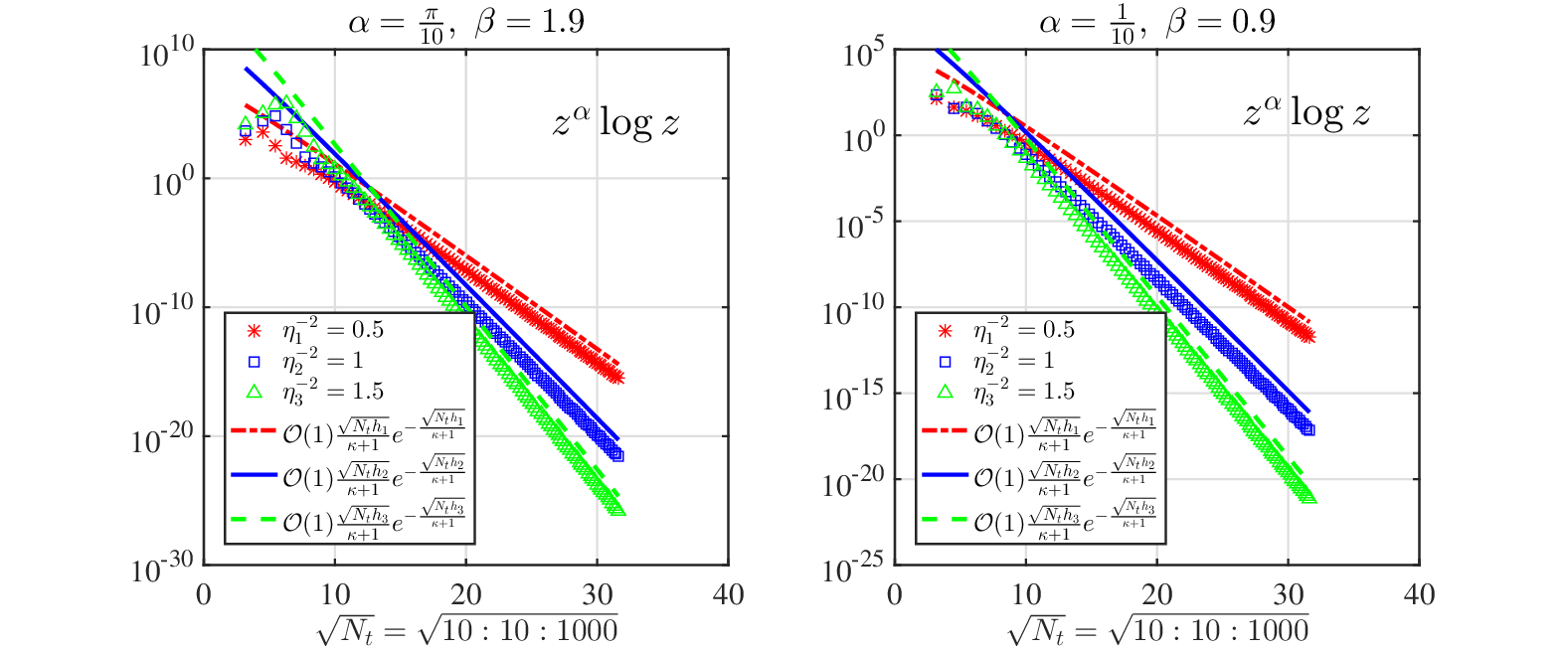}}
\caption{The decay behaviors of  $\|I-r_{N_t}\|_{\infty}$ and $\|I_{log}-\widetilde{r}_{N_t}\|_{\infty}$ for $z=xe^{\pm\frac{\theta\pi}{2}}$ with $x\in[0,x^*]$ and $\theta\in[0,\beta]$ with various step length $h_{\ell}=\eta^{-2}_{\ell}h_{opt}$,
where $h_{opt}=4\pi^2\alpha$ and we set $\eta^{-2}_{\ell}=0.5\ell,\ \ell=1,2,3$. Additionally, $x^*=e^{\frac{1}{\alpha}(c_0-T)}$, $c_0=\sqrt{M_0h+\frac{1}{4}(2-\beta)^2\alpha^2\pi^2+\delta_0}$ and $M_0$ are defined by \cref{eq:realM0} and \cref{eq:real}, respectively. We specific $\delta_0=0$ here.}
\label{Trapzoid_rule_nearorigin_vdomains}
\end{figure}

\subsection{Uniform bounds of quadrature errors for $z=xe^{\pm\frac{\theta\pi}{2}i}\in S_{\beta}$ with $x\in [x^*,1]$}
\label{subsec:3.2}

To obtain the uniform quadrature errors, we consider the error on each V-shaped domain
$$
\mathcal{A}^*_\theta=\big{\{}z=xe^{\pm\frac{\theta\pi}{2}i}:\ x\in[x^*,1]\big{\}},\ x^*=Ce^{\frac{1}{\alpha}(c_0-T)}
$$
for  $\theta\in[0,\beta]$, wherein the quadrature error for $\int_{0}^{+\infty}f(u,z)du$ is analysed in detail. 
Based on these analysis, we establish the uniform bound  independent of $\theta\in [0,\beta]$ and $x\in [x^*,1]$ and  directly apply to $I_{log}(z)-\widetilde{r}_{N_t}(z)$.

Recall the definition of  the simple pole $u_l$ or $u^-_l$ ($l=0,1)$ of $f(u,z)$, we see that $0<(2-\theta)\alpha\pi\frac{\sqrt{h}}{2}\le a_0<a_1$ for $x\in[x^*,1]$.
Then as the functions of $u$, $f(u,z^+)$ and $f(u,z^-)$ are analytic in the strip domains
$$\big{\{}u\in\mathbb{C}:\Re(u)\ge h,\ -a_0<\Im(u)< a_1\big{\}},\ \
\big{\{}u\in\mathbb{C}:\Re(u)\ge h,\ -a_1<\Im(u)< a_0\big{\}}
$$
except for  $u_l$ or $u^-_l$ ($l=0,1)$ on their boundaries,
respectively.
Additionally, we see that all the remaining poles of $f(u,z^+)$ and $f(u,z^-)$ locate outside of the strip domain
\begin{align}\label{extended_stripdomain}
\{u\in\mathbb{C}: \Re(u)>0,\ |\Im(u)|<a_0+a=:A_0\},\quad a:=2\pi\alpha(\alpha\log{\frac{x}{C}}+T).
\end{align}

Furthermore, from the definitions of $c_0$ and $x^*$ \cref{eq:realM0}, we see that $T+\alpha\log{\frac{x}{C}}\ge c_0$ for $x\in [x^*,1]$ and then $v_\ell$ in \cref{eq:real_imag_part_v0a0} and \cref{eq:real_imag_part_v1a1} satisfy $v_\ell=\Re(u_\ell)>M_0h$ ($\ell=0,1$).

In order to get the exponentially convergent rates \cref{eq:err} and \cref{eq:errlog} of the
trapezoidal rules \cref{polynomial app1} and \cref{polynomial app1_log}, respectively, along the way \cite{Trefethen2014SIREV} and \cite{XY2023} it is necessary to introduce the Poisson summation formula (cf. \cite[(10.6-21)]{Henrici} and \cite{Trefethen2014SIREV})
\begin{align}\label{Poisson_summation_formula}
h\sum_{j=-\infty}^{+\infty}\hat f(jh,z)
=\sum_{n=-\infty}^{+\infty}\mathfrak{F}[\hat f]
\big(\frac{2n\pi}{h}\big),
\end{align}
where $\mathfrak{F}[\hat f]
\big(\frac{2n\pi}{h}\big)$ is the $n$-th discrete Fourier transform of

\begin{align}\label{eq:extension_of_f}
\hat f(u,z)=
\begin{cases}
f(u,z), & \Re(u)\ge h,\\
f(h+i\Im(u),z), & -h\le \Re(u)\le h,\\
f(-u,z), & \Re(u)\le-h,
\end{cases}\quad z\in\mathcal{A}^*_\theta.
\end{align}
Consequently,
\begin{align}\label{err_formula_by_fourier}
\int_{-\infty}^{+\infty}\hat f(u,z)du
-h\sum_{j=-\infty}^{+\infty}\hat f(jh,z)
=-\sum_{n\ne0}\mathfrak{F}[\hat f]
\big(\frac{2n\pi}{h}\big).
\end{align}

\bigskip
We first show the decay behavior of the discrete Fourier transform of $\hat f(u,z)$ for fixed $z\in \mathcal{A}^*_\theta$.
For simplicity, we establish the conclusion by leveraging several lemmas that are sketched in \cref{AppendixA}.

\begin{lemma}\label{eq:thm}
Let
$a_{0,\beta}=(2-\beta)\alpha\pi(T+\alpha\log{\frac{x}{C}})$, 
$\hat f(u,z)$ be defined in \cref{eq:extension_of_f} with $z\in \mathcal{A}^*_\theta$
and its discrete Fourier transform be
\begin{align*}
\mathfrak{F}[\hat f]\big{(}\frac{2 n\pi }{h}\big{)}
=\int_{-\infty}^{+\infty}\hat f(u,z)e^{-i\frac{2n\pi}{h}u}du,\quad
n=0,1,\cdots.
\end{align*}
Then the sum of the discrete Fourier transform decays at an exponential rate
\begin{align}\label{eq:conclusionOfFouriersum}
\sum_{n\ne0}\mathfrak{F}[\hat f]\big{(}\frac{2 n\pi }{h}\big{)}=\mathcal{O}(e^{-T})+\mathcal{O}\left(\frac{x^\alpha}{e^{\frac{2\pi }{h}a_{0,\beta}}-1}\right)
\end{align}
and
\begin{align}\label{errquad}
I(z)-r_{N_t}(z)=\mathcal{O}(e^{-T})+\mathcal{O}\left(\frac{x^\alpha}{e^{\frac{2\pi }{h}a_{0,\beta}}-1}\right),
\end{align}
where all the constants in $\mathcal{O}$ terms are independent of $n$, $T$, $\theta$ and $x$ for $z\in \mathcal{A}^*_\theta$.
\end{lemma}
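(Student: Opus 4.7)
The plan is to apply the Poisson summation identity \cref{err_formula_by_fourier}, which transforms the trapezoidal quadrature error into the sum $\sum_{n\ne 0}\mathfrak{F}[\hat f](2\pi n/h)$, and then estimate each Fourier coefficient by shifting the integration contour into the complex plane so as to capture the residues at the nearest poles $u_0, u_1$ of $f(u,z^+)$ (and $u_0^-, u_1^-$ of $f(u,z^-)$). First, because $\hat f$ is even and equals the constant $f(h,z)$ on $[-h,h]$, the middle contribution vanishes for every nonzero integer $n$ (since $\int_{-h}^{h} e^{-2\pi inu/h}\,du=0$), which reduces the Fourier coefficient to
\[
\mathfrak{F}[\hat f](2\pi n/h)=2\int_{h}^{+\infty} f(u,z)\cos(2\pi n u/h)\,du.
\]

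For $n>0$ and $z=xe^{i\theta\pi/2}$, I split the cosine as $(e^{2\pi inu/h}+e^{-2\pi inu/h})/2$ and deform the path $[h,+\infty)$ upward into $\{0<\Im(u)<A_0\}$ for the first term and downward into $\{-A_0<\Im(u)<0\}$ for the second. By the pole structure recorded after \cref{extended_stripdomain}, each strip contains at most the single pole $u_1$ (resp.\ $u_0$), and a direct residue computation using $\sqrt{u_0}=T+\alpha\log(x/C)+i\alpha\pi(\theta/2-1)$ and the derivative of the denominator yields
\[
\mathrm{Res}_{u=u_0} f(u,z^+)=-\frac{\sin(\alpha\pi)}{\pi}\,x^\alpha\,e^{i\alpha\pi(\theta/2-1)},
\]
so the contribution from $u_0$ is bounded by a constant multiple of $x^\alpha e^{-2\pi n a_0/h}$ (using $|e^{-2\pi inu_0/h}|=e^{-2\pi n a_0/h}$); the contribution from $u_1$ is smaller since $a_1>a_0$. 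Summing over $n\ge 1$ produces a geometric series, and the uniform lower bound $a_0\ge a_{0,\beta}$, which follows from $\theta\le\beta$ and $T+\alpha\log(x/C)\ge c_0>0$ on $\mathcal A^*_\theta$, yields the main bound $\mathcal O\bigl(x^\alpha/(e^{2\pi a_{0,\beta}/h}-1)\bigr)$; the $z^-$ case is identical by symmetry.

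The main obstacle is to show that the remaining contour integrals along the shifted horizontal lines $\Im(u)=\pm A_0$ and the vertical connecting segments at $\Re(u)=h$ (and the vanishing contributions at $\Re(u)\to+\infty$), summed over all $n\ne 0$, contribute only $\mathcal O(e^{-T})$. This requires uniform lower bounds on $|Ce^{(\sqrt u-T)/\alpha}+z|$ along those contours, which by the choice $A_0=a_0+a$ are bounded away from all remaining (farther) poles of $f$; paired with the decay factor $e^{-2\pi nA_0/h}$ from the translated exponentials, the geometric summation produces the $e^{-T}$ bound stated in \cref{eq:conclusionOfFouriersum}, with the technical estimates anticipated to be packaged as auxiliary lemmas in \cref{AppendixA}. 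Finally, \cref{errquad} follows from \cref{err_formula_by_fourier} combined with \cref{eq:conclusionOfFouriersum} and two elementary truncation estimates: the initial piece $\int_0^h f(u,z)\,du=\mathcal O(e^{-T})$, controlled directly from the bounds on $f$ near $u=0$, and the tail $\int_{N_th}^{+\infty} f(u,z)\,du-h\sum_{j>N_t} f(jh,z)=\mathcal O(e^{-T})$, which is built into the derivation of \cref{eq:intC1} via the decay estimate \cref{eq:inequ}.
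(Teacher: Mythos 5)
Your outline follows the same route as the paper (Poisson summation via \cref{err_formula_by_fourier}, vanishing of the $[-h,h]$ piece, contour deformation toward the nearest poles, geometric summation over $n$; your residue at $u_0$ is computed correctly), but the deformation you propose has a genuine flaw. For $z^+$ you shift the upper contour only to $\Im(u)=A_0=a_0+a=(4-\theta)\alpha\pi\big(T+\alpha\log\frac{x}{C}\big)$, whereas the upper pole $u_1$ sits at height $a_1=(2+\theta)\alpha\pi\big(T+\alpha\log\frac{x}{C}\big)$. At $\theta=1$ these coincide, so the shifted line passes through the pole, and for $\theta$ near $1$ it passes arbitrarily close to it; hence your claimed uniform lower bound on $\big|Ce^{(\sqrt{u}-T)/\alpha}+z\big|$ along $\Im(u)=+A_0$, uniform in $\theta\in[0,\beta]$, fails whenever $\beta$ is near or above $1$, and the assertion that the strip $\{0<\Im(u)<A_0\}$ contains the pole $u_1$ is simply wrong for $\theta\ge1$. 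The paper avoids this by using asymmetric heights ($A_0$ downward, $A_1=a_1+a$ upward) and by detouring around both $u_0$ and $u_1$ with small circles $C_\rho^{\mp}$ whose radius $\rho$ is bounded away from all other poles; that choice is what makes the estimates uniform in $\theta$ and $x$.

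Second, the part you defer as "the main obstacle" is where the substance of the lemma lies, and your bookkeeping for it is not right: the shifted horizontal-line integrals cannot be summed into $\mathcal{O}(e^{-T})$ in general. Along $\Im(u)=\pm A_0$ one only has $\big|e^{\sqrt{u}-T}\big|\approx(x/C)^\alpha$ rather than $e^{-T}$, so these pieces are bounded by $x^\alpha e^{-\frac{2n\pi}{h}A_0}\mathcal{O}(1)$; for large $\sigma$ (large $h$) and $x$ near $1$ this is not $\mathcal{O}(e^{-T})$, and these contributions must instead be absorbed into the $\mathcal{O}\big(x^\alpha/(e^{\frac{2\pi}{h}a_{0,\beta}}-1)\big)$ term using $a_{0,\beta}\le a_0<A_0$ (this is exactly how the paper uses \cref{infty}); only the vertical segments at $\Re(u)=h$ yield the $\mathcal{O}(e^{-T})$ term, via the $n$-summable factor in \cref{la3}. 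Moreover, the uniform (in $n$, $T$, $x\in[x^*,1]$, $\theta\in[0,\beta]$) bounds on the circles and on the horizontal lines are precisely the nontrivial content — in the paper they require the bound \cref{eq:bound_circ3} near the poles and the three-subinterval splitting of $[h,+\infty)$ with careful control of $\Re(\sqrt{u}-\sqrt{u_0})$ in \cref{infty} — so asserting that they will be "packaged as auxiliary lemmas" leaves the core of the proof unestablished. Your final passage from \cref{eq:conclusionOfFouriersum} to \cref{errquad} (truncation near $u=0$ and in the tail) does match the paper and is fine.
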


\begin{proof}
 To avoid repetition, only the case $z=z^+=xe^{\frac{\theta\pi}{2}i}$ is proved here, and the other case $z=z^-=xe^{-\frac{\theta\pi}{2}i}$ can be checked in the exactly same manner.

From the definition of \eqref{eq:extension_of_f}, $\hat f(u,z)$ is continuous and piecewise smooth for $u\in (-\infty,+\infty)$ and arbitrarily fixed $z\in S_{\beta}$. Moreover,
from \cref{eq:inequ_neg} and \cref{eq:inequ},
one can check readily that there exists some positive number $M$ such that
$$
\int_0^{+\infty}|f(u,z)|du\le \int_{-\infty}^{\alpha(2\log{2}-\log{C})}\frac{C^{\alpha}}{\sin\frac{\beta\pi}{2}}e^{t}dt+\int_{\alpha(2\log{2}-\log{C})}^{+\infty} \frac{\sqrt{2}}{C^{1-\alpha}e^{\frac{1}{\kappa}t}}dt<M$$
holds uniformly for all $z\in S_{\beta}$, wherein
$$\int_{-\infty}^{+\infty}|\hat{f}(u,z)|du=2h\max_{z\in S_\beta}|f(h,z)|+2\int_{h}^{+\infty}|f(u,z)|du<M'$$
also holds uniformly for constant $M'$. Then $\hat{f}(u,z)$ satisfies \cite[(10.6-12)]{Henrici}.

Define an $h$-periodic function in $v$
\begin{equation}\label{eq:per}
F(v,z)=\sum_{k=-\infty}^{\infty}\hat{f}(kh+v,z),\ v\in[0,h],
\end{equation}
whose uniform convergence can be checked readily. For convenient narration, we also denote $A_1=a_1+a$, similarly to $A_0=a_0+a$ in \cref{extended_stripdomain}.
Then following \cite[pp. 270]{Henrici} and with the help of Cauchy's integral theorem, the $n$th Fourier $(n\ge1)$ coefficient
of $F(v,z)$ satisfies that
\allowdisplaybreaks[4]
\begin{align}
c_n=&\frac{1}{h}\mathfrak{F}[\hat f]\big{(}\frac{2n\pi}{h}\big{)}
=\frac{1}{h}\int_0^hF(v,z)e^{-i\frac{2n\pi}{h}v}dv\label{eq:fourier_c}\\
=&\frac{1}{h}\sum_{k=-\infty}^{\infty}
\int_{kh}^{(k+1)h}\hat f(u,z)e^{-i\frac{2n\pi}{h}u}du\notag\\
=&\frac{1}{h}
\int_{h}^{+\infty} f(u,z)e^{-i\frac{2n\pi}{h}u}du+\frac{1}{h}
\int_{h}^{+\infty} f(u,z)e^{i\frac{2n\pi}{h}u}du
\notag\\
&+\frac{2}{h}\int_{0}^{h}f(h,z)\cos{\big{(}\frac{2n\pi}{h}u\big{)}}du\notag\\
=&\frac{1}{h}
\int_{\Gamma^{-}_{\rho,h}} f(u,z)e^{-i\frac{2n\pi}{h}u}du+\frac{1}{h}
\int_{\Gamma^{+}_{\rho,h}}f(u,z)e^{i\frac{2n\pi}{h}u}du
\label{eq:fourier_positive11}\\
=&-\frac{i}{h}\int_{0}^{A_0}f(h-it,z)e^{-\frac{2n\pi}{h}t}dt+\frac{i}{h}\int_{0}^{A_1}f(h+it,z)e^{-\frac{2n\pi}{h}t}dt
\label{eq:fourier_positive11111}\\
&+\frac{1}{h}\left\{
\int_{h-iA_0}^{u_0-ia} +\int_{C^{-}_{\rho}}+\int_{u_0-ia}^{+\infty-iA_0}\right\}f(u,z)e^{-i\frac{2n\pi}{h}u}du\notag\\
&+\frac{1}{h}\left\{
\int_{h+iA_1}^{u_1+ia} +\int_{C^{+}_{\rho}}+\int_{u_1+ia}^{+\infty+iA_1}\right\}f(u,z)e^{i\frac{2n\pi}{h}u}du,\notag\\
=&
-\frac{i}{h}\int_{0}^{A_0}f(h-it,z)e^{-\frac{2n\pi}{h}t}dt+\frac{i}{h}\int_{0}^{A_1}f(h+it,z)e^{-\frac{2n\pi}{h}t}dt\label{eq:fourier_positive}\\
&+\frac{1}{h}\left\{
\int_{h-iA_0}^{+\infty-iA_0} +\int_{C^{-}_{\rho}}\right\}f(u,z)e^{-i\frac{2n\pi}{h}u}du\notag\\
&+\frac{1}{h}\left\{
\int_{h+iA_1}^{+\infty+iA_1} +\int_{C^{+}_{\rho}}\right\}f(u,z)e^{i\frac{2n\pi}{h}u}du,\notag
\end{align}
where we used $\frac{2}{h}\int_{0}^{h}f(h,z)\cos{\big{(}\frac{2n\pi}{h}u\big{)}}du=\frac{2f(h,z)}{h}\int_{0}^{h}\cos{\big{(}\frac{2n\pi}{h}u\big{)}}du=0$,
and
$$C^{-}_{\rho}=\{z=u_0+\rho e^{i\vartheta}\big|\ \vartheta:0\rightarrow-2\pi\},
\quad C^{+}_{\rho}=\{z=u_1+\rho e^{i\vartheta}\big|\ \vartheta:0\rightarrow2\pi\}
\footnote{The valid range of $\vartheta$ for $C_{\rho}^{\pm}$ is actually $\pm\frac{\pi}{2}\rightarrow\pm\frac{5\pi}{2}$. However, it is equivalent to express it as $0\rightarrow\pm2\pi$ due to the invariance of the integral $\int_{C_{\rho}^{\pm}}f(u,z)e^{\pm i\frac{2n\pi}{h}u}du$.}$$
with
$0<\rho=\frac{1}{2}\min\{\alpha^2\pi^2,a_{0,\beta},\frac{1}{N_0}\}$ for some fixed sufficiently large  $N_0$ independent of $z$ and $T$. We also used in \cref{eq:fourier_positive11} Cauchy's integral theorem on the holomorphic function $f(u,z)$, then the integrals on $[h,+\infty)$ are converted to those on the paths (see {\sc Fig}. \ref{integral_contour}):
\begin{align*}
&\Gamma^{-}_{\rho,h}:\ h\rightarrow h-iA_0\rightarrow u_0-ia\rightarrow C_{\rho}^{-}
\rightarrow u_0-ia\rightarrow+\infty-iA_0\rightarrow+\infty,\\
&\Gamma^{+}_{\rho,h}:\ h\rightarrow h+iA_1\rightarrow u_1+ia\rightarrow C_{\rho}^{+}
\rightarrow u_1+ia\rightarrow+\infty+iA_1\rightarrow+\infty.
\end{align*}

\begin{figure}[htp]
\centerline{\includegraphics[width=14cm]{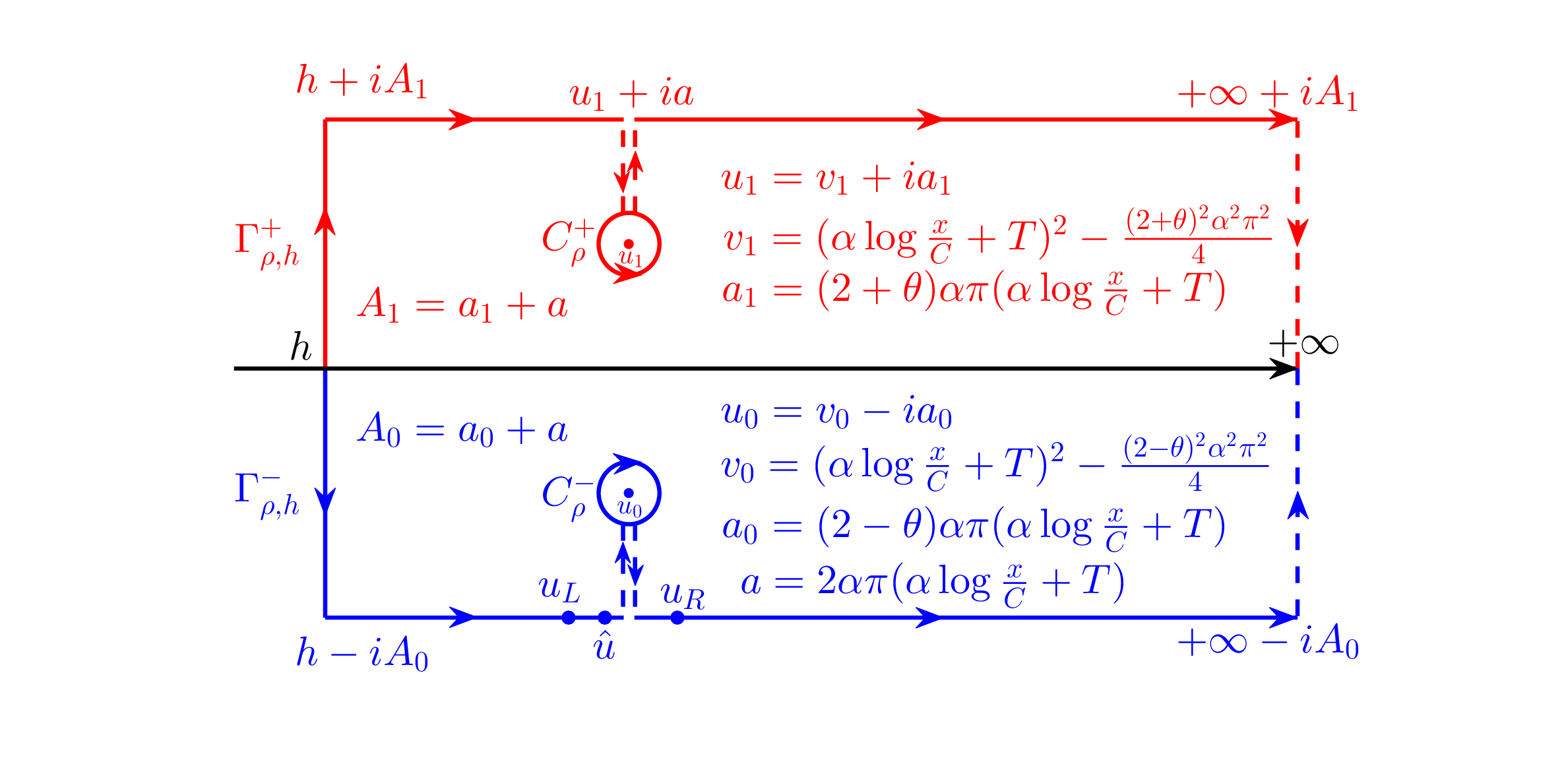}}
\caption{The integral contours $\Gamma^{-}_{\rho,h}$ (blue) and $\Gamma^{+}_{\rho,h}$ (red). The first two nearest poles to the real line of $f(u,z)$ are $u_0$ and $u_1$.
Together with the straight line $[h,+\infty]$ in the opposite direction, they form two closed circuits, wherein $f(u,z)$ is holomorphic.}
\label{integral_contour}
\end{figure}

It is obvious that the integrals on the vertical line segments $(u_0-ia)\rightleftharpoons C^{-}_{\rho}$ and $(u_1+ia)\rightleftharpoons C^{+}_{\rho}$ (not include $C^{\pm}_{\rho}$) can be canceled.
Meanwhile, we used in \cref{eq:fourier_positive11111} the fact that
$$
\lim_{U\rightarrow +\infty}\int_{U-iA_0}^{U}
f(u,z)e^{-i\frac{2n\pi}{h}u}du=0,\
\lim_{U\rightarrow +\infty}\int_{U+iA_1}^{U}
f(u,z)e^{i\frac{2n\pi}{h}u}du=0.
$$

The integrals in \cref{eq:fourier_positive} can be bounded uniformly for $z\in \mathcal{A}^*_\theta$ as follows
\begin{align*}
&\bigg{|}\int_{0}^{A_1}f(h+it,z)
e^{-\frac{2n\pi}{h}t}dt-\int_{0}^{A_0}f(h-it,z)
e^{-\frac{2n\pi}{h}t}dt\bigg{|}\\
&=\mathcal{O}(e^{-T})
\int_0^{\infty}te^{\sqrt{t}}e^{-\frac{2n\pi t}{h}}
dt,
\hspace{3.5cm} \mbox{(see \cref{la3})} \notag\\
&\bigg{|}\int_{C^{-}_{\rho}}
f(u,z)e^{-i\frac{2n\pi}{h}u}du\bigg{|}
=e^{-\frac{2n\pi}{h}a_0}x^{\alpha}\mathcal{O}(1),\notag\\
&\bigg{|}\int_{C^{+}_{\rho}}
f(u,z)e^{i\frac{2n\pi}{h}u}du\bigg{|}
=e^{-\frac{2n\pi}{h}a_1}x^{\alpha}\mathcal{O}(1),
\hspace{1.7cm} \mbox{(see \cref{lemma_inte_circ})}\\
&\bigg{|}
\int_{h-iA_0}^{+\infty-iA_0}f(u,z)e^{-i\frac{2n\pi}{h}u}du\bigg{|}
=e^{-\frac{2n\pi}{h}A_0}x^{\alpha}\mathcal{O}(1),\\
&\bigg{|}
\int_{h+iA_1}^{+\infty+iA_1}f(u,z)e^{i\frac{2n\pi}{h}u}du\bigg{|}
=e^{-\frac{2n\pi}{h}A_1}x^{\alpha}\mathcal{O}(1),
\hspace{.5cm} \mbox{(see \cref{infty})}
\end{align*}
respectively, where the constants in $\mathcal{O}$ terms are independent of $n$, $x$, $\theta$ and $T$ for $z\in \mathcal{A}^*_\theta$.
Then it follows
by  $a_{0,\beta}\le a_i< A_i$ ($i=0,1$) that
\begin{align}\label{eq:fourier_minus}
h|c_n|=\big{|}\mathfrak{F}[\hat f]\big{(}\frac{2 n\pi }{h}\big{)}\big{|}
=\mathcal{O}(e^{-T})\int_0^{+\infty}ue^{\sqrt{u}}e^{-\frac{2n\pi u}{h}}du+e^{-\frac{2n\pi}{h}a_{0,\beta}}x^{\alpha}\mathcal{O}(1).
\end{align}

In the analogous way, from \cref{eq:fourier_c}, \cref{eq:fourier_minus} still holds for $n\le-1$. Thus we get
\begin{align*}
&h\bigg|\sum_{n\ne 0}c_n\bigg|=\bigg|\sum_{n\ne0}\mathfrak{F}[\hat f]\big{(}\frac{2 n\pi }{h}\big{)}\bigg|
=\mathcal{O}(e^{-T})\int_0^{+\infty}\frac{ue^{\sqrt{u}}}{e^{\frac{2\pi u}{h}}-1}du
+\frac{x^{\alpha}\mathcal{O}(1)}{e^{\frac{2\pi}{h}a_{0,\beta}}-1},
\end{align*}
which leads to the desired result \cref{eq:conclusionOfFouriersum}, where we used that $\int_{0}^{+\infty}\frac{ue^{\sqrt{u}}}{e^{\frac{2\pi u}{h}}-1}du$ is convergent and dependent only on $h$.

It is worthy of noting that
\begin{align*}
\int_{-\infty}^{+\infty}\hat f(u,z)du
=&2hf(h,z)+2\int_h^{+\infty}f(u,z)du\\
=&2I(z)+\mathcal{O}(e^{-T}),
\end{align*}
where we applied $\int_{(\kappa+1)^2T^2}^{+\infty}f(u,z)du=\mathcal{O}(e^{-T})$, $f(h,z)=\mathcal{O}(e^{-T})$ and
$$
\bigg|\int_0^hf(u,z)du\bigg|\le \frac{e^{\sqrt{h}-T}C^{\alpha}\sin(\pi\alpha)}
{2\delta_{\theta}\pi\alpha}\int_0^h\frac{1}{\sqrt{u}}du=\mathcal{O}(e^{-T})
$$
with $\delta_\theta=1$ for $0\le\theta\le1$ and $\delta_\theta=\sin{\frac{\beta\pi}{2}}$ for $1<\theta\le\beta<2$ by \cref{eq:inequ_neg}. 
By utilizing the Poisson summation formula (cf. \cite[(10.6-21)]{Henrici} and \cite{Trefethen2014SIREV})
\begin{align}\label{PossionSum}
h\sum_{k=-\infty}^{+\infty}F(kh+\tau)={\rm P.V.}\sum_{k=-\infty}^{+\infty}\mathfrak{F}[\hat f]\left(\frac{2\pi n}{h}\right)e^{\frac{2i\pi n\tau}{h}}
\end{align}
with $\tau=0$, together with
\cref{err_formula_by_fourier} it deduces
\begin{align*}
\int_{-\infty}^{+\infty}\hat f(u,z)du
-h\sum_{j=-\infty}^{+\infty}\hat f(jh,z)
=-h\sum_{n\ne0}c_n,
\end{align*}
then together with \cref{eq:conclusionOfFouriersum}, it implies
\begin{align*}
&\bigg{|}\int_{0}^{+\infty} f(u,z)du
-h\sum_{j=1}^{N_t} f(jh,z)\bigg{|}\notag\\
\le & \frac{h}{2}\sum_{n\ne0}|c_n|+\mathcal{O}(e^{-T})+h
\sum_{j=N_t+1}^{+\infty}|f(jh,z)|\\
\le & \frac{h}{2}\sum_{n\ne0}|c_n|+\mathcal{O}(e^{-T})
+h\frac{\sin{(\alpha\pi)}}{\alpha\pi C^{1-\alpha}}
\sum_{j=N_t+1}^{+\infty}\frac{1}{\sqrt{jh}}
e^{-\frac{1}{\kappa}(\sqrt{jh}-T)}\\
\le&\mathcal{O}(e^{-T})+\mathcal{O}\left(\frac{x^\alpha}{e^{\frac{2\pi}{h}a_{0,\beta}}-1}\right)
+\frac{\sin{(\alpha\pi)}}{\alpha\pi C^{1-\alpha}}
\int_{(\kappa+1)^2T^2}^{+\infty}
\frac{e^{-\frac{1}{\kappa}(\sqrt{u}-T)}}{\sqrt{u}}du\notag\\
=&\mathcal{O}(e^{-T})+\mathcal{O}\left(\frac{x^\alpha}{e^{\frac{2\pi }{h}a_{0,\beta}}-1}\right)\notag
\end{align*}
by applying the fact for $T\ge(1-\alpha)\log{\frac{2x}{C}}$ and $u\ge(\kappa+1)^2T^2$ that
\begin{align*}
|f(u,z)|
\le&\frac{\sin{(\alpha\pi)}}{\alpha\pi}\frac{x}{2\sqrt{u}}
\frac{C^{\alpha}e^{\sqrt{u}-T}}{Ce^{\frac{1}{\alpha}(\sqrt{u}-T)}-x}
=\frac{\sin{(\alpha\pi)}}{\alpha\pi}\frac{x}{2\sqrt{u}}
\frac{C^{\alpha}e^{-\frac{1}{\kappa}(\sqrt{u}-T)}}
{C\big[1-e^{\log{\frac{x}{C}}-\frac{1}{\alpha}(\sqrt{u}-T)}\big]}\\
\le&\frac{\sin{(\alpha\pi)}}{\alpha\pi}\frac{e^{-\frac{1}{\kappa}(\sqrt{u}-T)}}{C^{1-\alpha}\sqrt{u}}.
\end{align*}
Therefore, by $\int_{0}^{+\infty} f(u,z)du=I(z)+\mathcal{O}(e^{-T})$ we obtain the desired result \cref{errquad}.
\end{proof}

\bigskip
Moreover, since $I_{log}(z)=\frac{1}{\alpha}
\int_0^{(\kappa+1)^2T^2}(\sqrt{u}-T)f(u,z)+\frac{\chi\alpha\pi}{\sin(\alpha\pi)} \int_0^{(\kappa+1)^2T^2}f(u,z)du$ \cref{eq:quadratureclog}, by \cref{eq:thm} we are only concerned with the quadrature error on the integrand
$$f_{log}(u,z)=\frac{1}{\alpha}(\sqrt{u}-T)f(u,z).$$

Through the same procedure, we obtain the following result from \Cref{LemmalogA}.

\begin{lemma}\label{eq:thmlog}
Let $a_{0,\beta}=(2-\beta)\alpha\pi(T+\alpha\log{\frac{x}{C}})$ and
$\hat f_{log}(u,z)$ be defined in \cref{eq:extension_of_f} with $f(u,z)$ replaced by $f_{log}(u,z)$ for
$z\in\mathcal{A}^*_{\theta}$.
Then the sum of the discrete Fourier transform decays at an exponential rate
\begin{align}\label{eq:conclusionOfFouriersumlog}
\sum_{n\ne0}\mathfrak{F}[\hat f_{log}]\big{(}\frac{2 n\pi }{h}\big{)}=\mathcal{O}(Te^{-T})+\mathcal{O}\left(\frac{x^\alpha}{e^{\frac{2\pi }{h}a_{0,\beta}}-1}\right)
\end{align}
and
\begin{align}\label{errquadlog}
I_{log}(z)-\widetilde{r}_{N_t}(z)=\mathcal{O}(Te^{-T})+\mathcal{O}\left(\frac{x^\alpha}{e^{\frac{2\pi }{h}a_{0,\beta}}-1}\right),
\end{align}
where all the constants in $\mathcal{O}$ terms are independent of $n$, $T$, $\theta\in [0,\beta]$ and $x\in [x^*,1]$.
\end{lemma}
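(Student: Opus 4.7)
The plan is to mirror the proof of \cref{eq:thm} almost verbatim, applied to the modified integrand $f_{log}(u,z)=\tfrac{1}{\alpha}(\sqrt{u}-T)f(u,z)$. A useful first reduction is to note that the definition of $I_{log}(z)$ in \cref{eq:quadratureclog} splits into two pieces: one involving $\tfrac{1}{\alpha}\int(\sqrt{u}-T)f(u,z)\,du$, and one which is a constant multiple of $\int f(u,z)\,du$. The splitting of $\widetilde{r}_{N_t}(z)$ in \cref{eq:ECrat2C} mirrors this decomposition, so by \cref{eq:thm} the second piece already contributes at most $\mathcal{O}(e^{-T})+\mathcal{O}(x^{\alpha}/(e^{2\pi a_{0,\beta}/h}-1))$ to the total quadrature error. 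The task therefore reduces to bounding the quadrature error on the first piece alone.

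For this piece I would extend $f_{log}$ to a continuous, piecewise smooth function $\hat f_{log}$ on $\mathbb{R}$ by the mirror construction of \cref{eq:extension_of_f}. Absolute integrability of $\hat f_{log}$ costs at most one extra factor of $T$ relative to $\hat f$, since $|(\sqrt{u}-T)/\alpha|\le(\kappa+1)T/\alpha$ on $[0,(\kappa+1)^2T^2]$ and $f$ itself decays exponentially outside. I would then compute $\mathfrak{F}[\hat f_{log}](2n\pi/h)$ by the same contour deformation used between \cref{eq:fourier_positive11} and \cref{eq:fourier_positive}: split the integral on $[h,+\infty)$ into two terms with oscillatory factors $e^{\mp i2n\pi u/h}$, shift the first downward to $\Im(u)=-A_0$ and the second upward to $\Im(u)=A_1$, and pick up the poles $u_0,u_1$ of $f$ via the small circles $C_\rho^{\pm}$. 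The factor $(\sqrt{u}-T)/\alpha$ is holomorphic in the shifted strips with the principal branch, since the contours remain in $\Re(u)\ge h>0$ and away from the negative real axis.

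Each of the resulting contributions is bounded by the analogues of \cref{la3,lemma_inte_circ,infty} collected in \cref{LemmalogA}. The horizontal-leg integrals at $\Im(u)=\pm A_i$ inherit only a polynomial-in-$T$ factor that is absorbed into the exponential $e^{-(2n\pi/h)A_i}$; on the small circles around $u_\ell$ one has $|\sqrt{u_\ell}-T|=\mathcal{O}(T)$ by \cref{eq:realp}, but this multiplies an integrand already weighted by $1/\sqrt{u_\ell}=\mathcal{O}(1/T)$, so the dominant $x^\alpha e^{-(2n\pi/h)a_{0,\beta}}$ behaviour survives unchanged. Summation over $n\neq 0$ then yields \cref{eq:conclusionOfFouriersumlog}, with the single $e^{-T}$ upgraded to $Te^{-T}$ reflecting the linear growth of $(\sqrt{u}-T)/\alpha$ on the horizontal legs.

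The quadrature bound \cref{errquadlog} is then assembled in the same way as at the end of \cref{eq:thm}: apply the Poisson summation identity \cref{PossionSum} with $\tau=0$, replace the tail $h\sum_{j>N_t}|f_{log}(jh,z)|$ by $\mathcal{O}(Te^{-T})$ using $|\sqrt{jh}-T|\le\sqrt{jh}$ together with the $f$-tail bound from \cref{eq:thm}, and absorb the $\int f\,du$ contribution through \cref{errquad}. The main obstacle I expect is the careful accounting on the small circles $C_\rho^{\pm}$: one has to verify that the extra factor $|\sqrt{u_\ell}-T|=\mathcal{O}(T)$ does not degrade the $x^\alpha/(e^{2\pi a_{0,\beta}/h}-1)$ bound, but instead cancels against $1/\sqrt{u_\ell}$ in $f$, leaving only the benign $T$-factor in the $e^{-T}$ piece. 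This cancellation is precisely the content of \cref{LemmalogA}, and once it is in place, the same geometric-series summation and Poisson argument used for $\hat f$ transfer without change.
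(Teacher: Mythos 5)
Your proposal is correct and follows essentially the same route as the paper, whose own proof of this lemma consists precisely of repeating the argument of \cref{eq:thm} for $f_{log}(u,z)=\frac{1}{\alpha}(\sqrt{u}-T)f(u,z)$ (after the same reduction that strips off the $\chi$-multiple of $\int f\,du$) and invoking \cref{LemmalogA} in place of \cref{la3}, \cref{lemma_inte_circ} and \cref{infty} for the contour pieces, then assembling \cref{errquadlog} via Poisson summation and the tail estimate exactly as you describe. Your parenthetical heuristic that $|\sqrt{u_\ell}-T|=\mathcal{O}(T)$ is cancelled by $1/\sqrt{u_\ell}$ on the circles is not quite the paper's accounting (it bounds $\sqrt{u}-T$ directly on $C_\rho^{\pm}$ in the proof of \cref{LemmalogA}), but since you ultimately defer that bound to \cref{LemmalogA} itself, the argument is the same as the paper's.
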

\begin{proof}
Estimate \cref{eq:conclusionOfFouriersumlog} follows from analogous argument of \Cref{eq:thm} by \Cref{LemmalogA}.
Quadrature error \cref{errquadlog} follows from  \cref{eq:conclusionOfFouriersumlog}
similar to the proof of  \Cref{eq:thm}.
\end{proof}

\section{Proof of Theorems \ref{mainthm} and \ref{mainthm2}}\label{sec:convergence}
\bigskip
To show  \cref{mainthm} and \cref{mainthm2}, we introduce the following estimates.
\begin{lemma}\label{eq:PalyC}
Let $h=\sigma^2\alpha^2=\sigma_{opt}^2\alpha^2/\eta^2$ ($\eta=\frac{\sigma_{opt}}{\sigma}$) and $Q(x)=\frac{x^\alpha}{e^{\frac{2\pi}{h}a_{0,\beta}}-1}$
for $x\in[x^*,1]$ and $a_{0,\beta}=(2-\beta)\alpha\pi(T+\alpha\log{\frac{x}{C}})$. Then it holds uniformly for $T\ge 0$ that  
\begin{align}\label{eq:qerr1}
\left\{\begin{array}{ll}
\frac{1}{e^{\eta^2 T}-1}\le  Q(x)\le \frac{e^{\frac{\sqrt{h}}{2}} e^{-T}}{e^{\eta^2\frac{\sqrt{h}}{2}}-1},&\eta\ge 1\\
\frac{(1-\eta^2)^{1-\frac{1}{\eta^2}}e^{-T}}{\eta^2}\le  Q(x)\le \max\left\{\frac{1}{e^{\eta^2 T}-1},\frac{e^{\frac{\sqrt{h}}{2}} e^{-T}}{e^{\eta^2\frac{\sqrt{h}}{2}}-1}\right\},&\eta<1.\end{array}\right.
\end{align}
\end{lemma}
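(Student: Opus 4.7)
The strategy is to reduce this two-variable inequality to a one-variable monotonicity problem by a clean change of variable, and then do calculus on the resulting function $g$.

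\textbf{Reduction.} I would substitute $s = T + \alpha\log(x/C)$. Since $\sigma_{opt}^2\alpha^2 = 2(2-\beta)\pi^2\alpha$, the hypothesis $h = \sigma_{opt}^2\alpha^2/\eta^2$ gives the clean identity
\begin{equation*}
\frac{2\pi}{h}\,a_{0,\beta} \;=\; \frac{2(2-\beta)\pi^2\alpha}{h}\,s \;=\; \eta^2 s,
\end{equation*}
and $x^\alpha = C^\alpha e^{s-T}$. Hence
\begin{equation*}
Q(x) \;=\; C^\alpha e^{-T}\,g(s), \qquad g(s) := \frac{e^s}{e^{\eta^2 s}-1},
\end{equation*}
where, as $x$ ranges over $[x^*,1]$, the variable $s$ ranges over $[c_0,\,T-\alpha\log C]$. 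The key fact from \cref{eq:real} and \cref{eq:realM0} (using $\alpha\pi\sqrt{M_0 h}\ge \alpha\pi(\sqrt[4]{4h})^2 = 2\alpha\pi\sqrt{h}$, hence $M_0 h\ge 4h$) is that $c_0 \ge 2\sqrt{h}$, so in particular $c_0\ge \sqrt{h}/2$. After this reduction, the lemma follows by analyzing $g$ on its domain.

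\textbf{Monotonicity of $g$.} A direct calculation yields
\begin{equation*}
g'(s) \;=\; \frac{e^s\bigl[(1-\eta^2)e^{\eta^2 s}-1\bigr]}{(e^{\eta^2 s}-1)^2},
\end{equation*}
so the sign of $g'$ is determined by the bracket. For $\eta\ge 1$ the bracket is $\le -1<0$, so $g$ is strictly decreasing. For $\eta<1$ the equation $g'(s)=0$ has the unique root $s^{*} = -\eta^{-2}\log(1-\eta^2)$, at which $g$ attains its global minimum, and the minimum value can be computed explicitly from $e^{\eta^2 s^{*}} = 1/(1-\eta^2)$ as $g(s^{*}) = \eta^{-2}(1-\eta^2)^{1-1/\eta^2}$.

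\textbf{Deducing the bounds.} In the case $\eta\ge 1$, monotonicity places the minimum of $Q$ at the right endpoint $s = T-\alpha\log C$, which (taking $C=1$ as is implicit in the stated bounds) gives exactly $1/(e^{\eta^2 T}-1)$, and the maximum at the left endpoint $s=c_0$. Since $c_0\ge \sqrt{h}/2$ and $g$ is decreasing, $g(c_0)\le g(\sqrt{h}/2)$, giving the upper bound $e^{-T}\,e^{\sqrt{h}/2}/(e^{\eta^2\sqrt{h}/2}-1)$. In the case $\eta<1$, the lower bound is immediate from $g\ge g(s^{*})$. The maximum of $g$ on $[c_0,T-\alpha\log C]$ is attained at one of the two endpoints; bounding $g$ at the right endpoint by $1/(e^{\eta^2 T}-1)$ and at the left endpoint by $g(\sqrt{h}/2)$ (using that $g$ is decreasing on $[\sqrt{h}/2,s^{*}]$, which contains $c_0$ in the typical parameter regime) produces the stated $\max\{\cdot,\cdot\}$.

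\textbf{Expected obstacle.} The delicate point is the left-endpoint bound $g(c_0)\le g(\sqrt{h}/2)$ in the $\eta<1$ case, which requires $c_0\le s^{*}$ so that both points sit on the decreasing branch of $g$. When $\eta$ is close to $1$ this is comfortable since $s^{*}\to\infty$, but when $\eta$ is small $s^{*}$ is near $1$ and one needs to check whether $c_0$ can exceed $s^{*}$; in that case $g(c_0)$ sits on the increasing branch and is instead dominated by $g(T-\alpha\log C)$, which is why the statement takes a $\max$ of the two candidate bounds. Carrying out this case split cleanly against the explicit definitions of $c_0$ in \cref{eq:realM0} and $h=\sigma^2\alpha^2$ is the main bookkeeping step; once this is done, the lemma follows.
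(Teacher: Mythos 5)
Your proposal is correct and is essentially the paper's own argument: the paper also reduces the claim to the monotonicity of $Q$, computing $\frac{d}{dx}Q(x)$ directly in $x$ (your substitution $s=T+\alpha\log(x/C)$ is only a cosmetic reparametrization of the same derivative-sign analysis), with the sign governed by the factor $(1-\eta^2)(x/C)^{\eta^2\alpha}e^{\eta^2T}-1$, and the bounds then read off at the endpoints and at the interior critical point. Your explicit treatment of the $\eta<1$ endpoint case split (and of $c_0\ge\sqrt{h}/2$) just spells out what the paper's terse proof leaves implicit.
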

\begin{proof}
From the definition of $a_{0,\beta}$, $Q(x)$ can be written as $Q(x)=\frac{x^{\alpha}}{(\frac{x}{C})^{\eta^2\alpha} e^{\eta^2 T}-1}$, then it directly follows from the monotonicity of $Q(x)$ by
\begin{align*}
\frac{d}{dx}Q(x)=\frac{(1-\eta^2)\left(\frac{x}{C}\right)^{\eta^2\alpha} e^{\eta^2 T}-1}{\left[(\frac{x}{C})^{\eta^2\alpha} e^{\eta^2 T}-1\right]^2}C^{\alpha-1}\alpha \left(\frac{x}{C}\right)^{\alpha-1}.
\end{align*}
\end{proof}

Hence, from Lemmas \ref{eq:thm}, \ref{eq:thmlog} and \ref{eq:PalyC}, we observe that the quadrature error \cref{errquad} for $I(z)$ (and \cref{errquadlog} for $I_{log}(z)$) is dominated by $\frac{e^{\sqrt{h}/2} e^{-T}}{e^{\eta^2\sqrt{h}/2}-1}$ when $\eta\ge1$, and $\frac{1}{e^{\eta^2 T}-1}$ when $\eta<1$ (by $Te^{-T}$ when $\eta\ge1$, and $\frac{1}{e^{\eta^2 T}-1}$ when $\eta<1$, respectively).
We illustrate the sharpness of these order estimates by {\sc Fig.} \ref{decay-behavior-of-Integral}.

\begin{figure}[htp]
\centerline{\includegraphics[width=12cm]{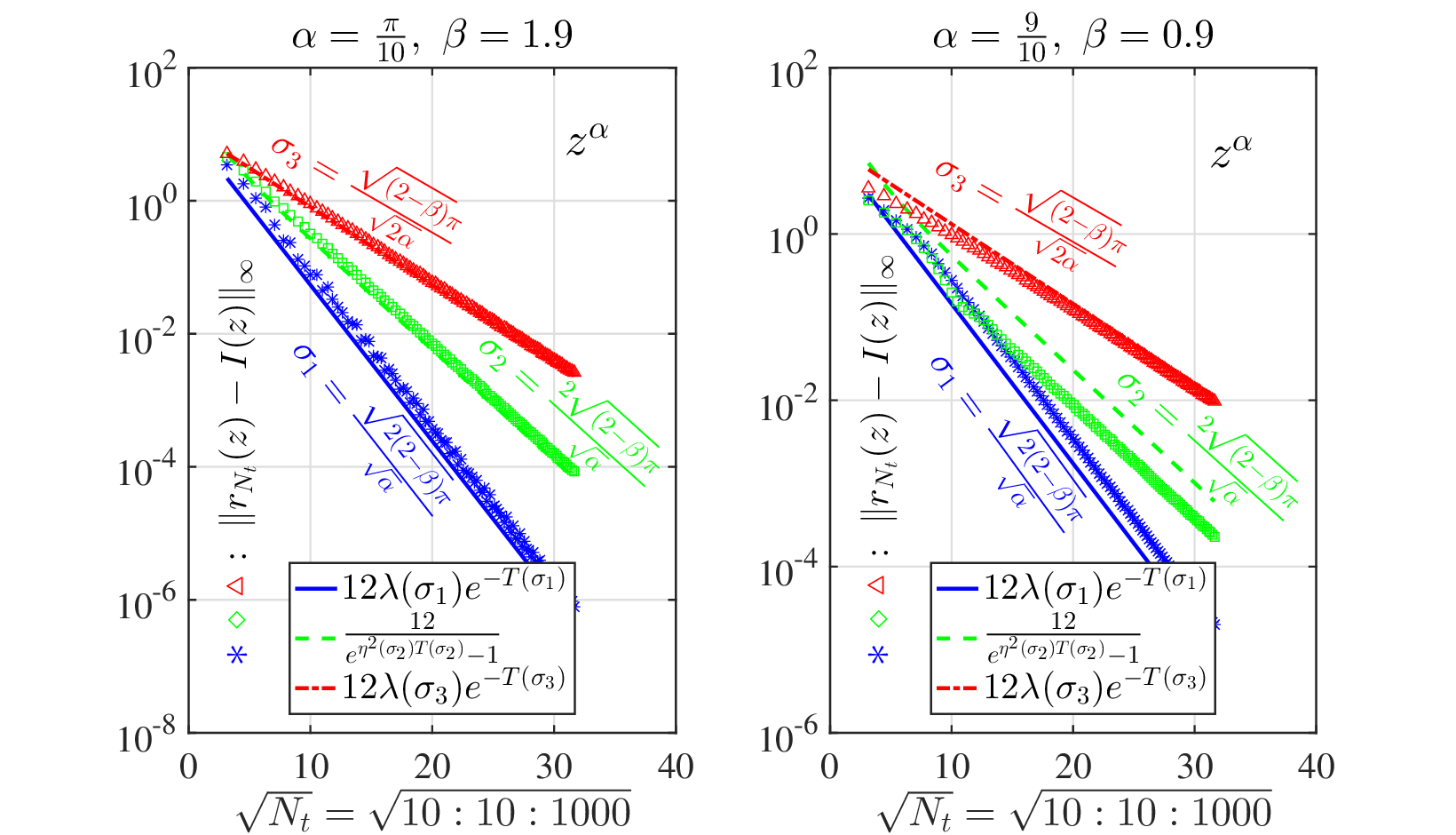}}
\centerline{\includegraphics[width=12cm]{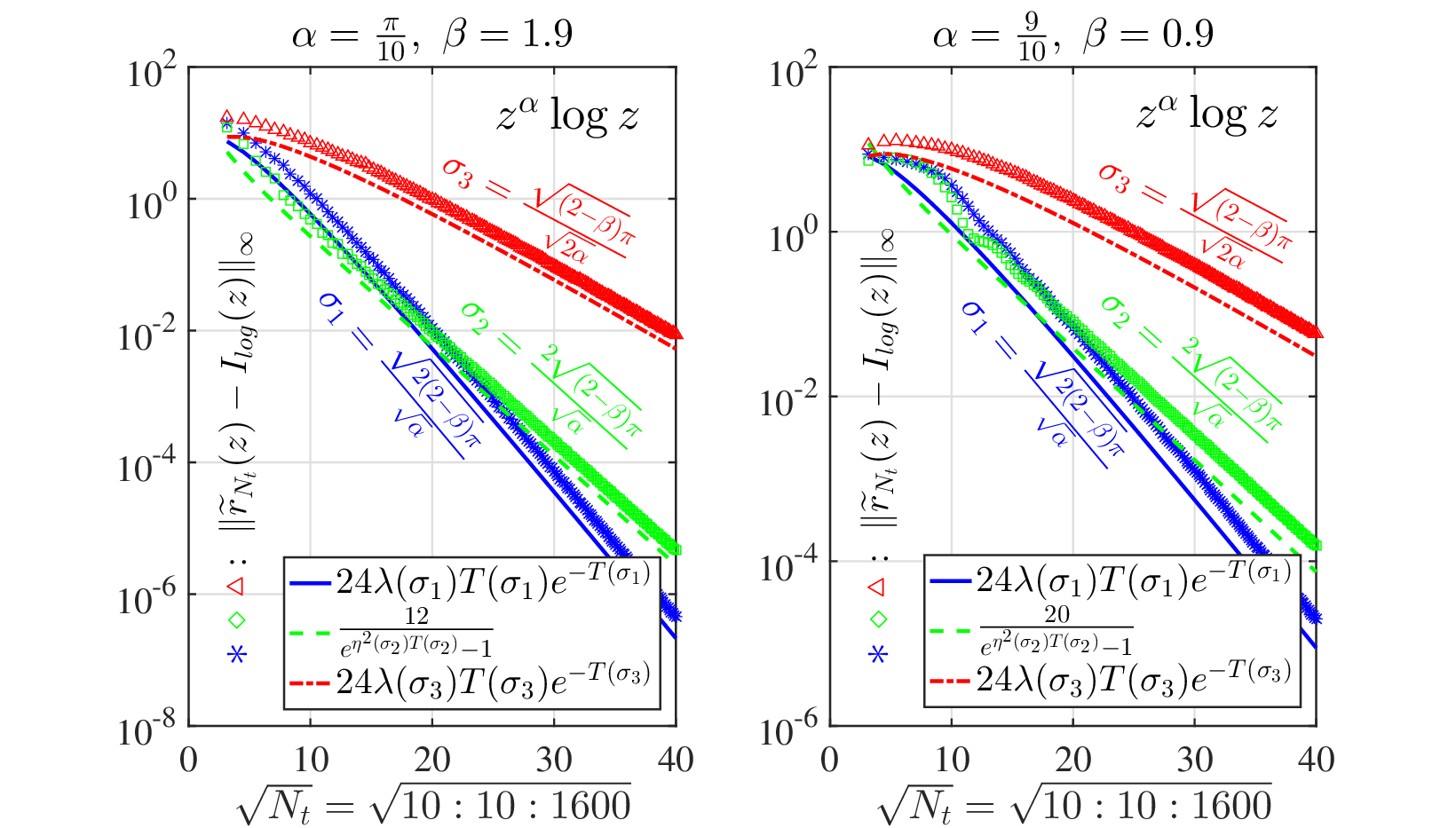}}
\caption{The decay behaviors of the quadrature errors $\|I-r_{N_t}\|_{\infty}$ for $z^{\alpha}$ (first row) and $\|I_{log}-\widetilde{r}_{N_t}\|_{\infty}$ for $z^{\alpha}\log{z}$ (second row), endowed with $T(\sigma_l)=\frac{\alpha\sigma_l\sqrt{N_t}}{\kappa+1}$, $\lambda(\sigma_l)=\frac{e^{\alpha\sigma_l/2}}{e^{\alpha\sigma_l\eta^2(\sigma_l)/2}-1}$ and $\eta(\sigma_l)=\frac{\sigma_{opt}}{\sigma_l}$ with parameters $\sigma_l,\ l=1,2,3$, which are  equivalent to, larger or smaller than the optimal $\sigma_{opt}=\frac{\sqrt{2(2-\beta)}\pi}{\sqrt{\alpha}}$, respectively. The infinite norm $\|\cdot\|_{\infty}$ is evaluated on the sector domain $S_{\beta}$ with $x=1$.}
\label{decay-behavior-of-Integral}
\end{figure}

\bigskip
{\bf Proof of  Theorems \ref{mainthm} and \ref{mainthm2}}: From Lemma \ref{eq:PalyC}, it is easy to verify that
\begin{align*}
\frac{x^\alpha}{e^{\frac{2\pi}{h}a_{0,\beta}}-1}=\left\{\begin{array}{ll}
\mathcal{O}(e^{-T}),& \sigma\le \sigma_{opt}\\
\mathcal{O}(e^{-\eta^2 T}),& \sigma> \sigma_{opt} \end{array}\right.
\end{align*}
uniformly for $z\in S_\beta$ with $x\in [x^*,1]$, which,  together with \Cref{la31} and \cref{errquad},  implies for $T=\sqrt{\frac{N_th}{(1+\kappa)^2}}$ and $N_1={\rm ceil}\big(\frac{N_t}{(\kappa+1)^2}\big)$ that
\begin{align*}
\|I-r_{N_t}\|_{\infty}=\left\{\begin{array}{ll}
\mathcal{O}(e^{-T})=\mathcal{O}\left(e^{-\sigma\alpha\sqrt{N_1}}\right),&\sigma\le \sigma_{opt}\\
\mathcal{O}(e^{-\eta^2 T})=\mathcal{O}\left(e^{-\eta\pi\sqrt{2(2-\beta)\alpha N_1}}\right),\ &\sigma> \sigma_{opt}.
 \end{array}\right. \end{align*}
Noting that
 $$
 \sqrt{N_1}=\sqrt{N-N_2}
 =\sqrt{N}\left(1+\mathcal{O}\left(\frac{N_2}{N}\right)\right)
 =\sqrt{N}+\mathcal{O}(1),
 $$
 it leads to Theorem \ref{mainthm} by \cref{eq:intC1} and  \cref{polynomial app1}.

Analogously, from \Cref{eq:thmlog} and \Cref{la31}  it establishes
 the desired result Theorem \ref{mainthm2}. These complete the proof.

\bigskip

 {\sc Fig}. \ref{rates} illustrates the optimal choices of parameter $\sigma$ and the sharpness of estimated convergence orders.
\begin{figure}[htbp]
\centerline{\includegraphics[height=6cm,width=12cm]{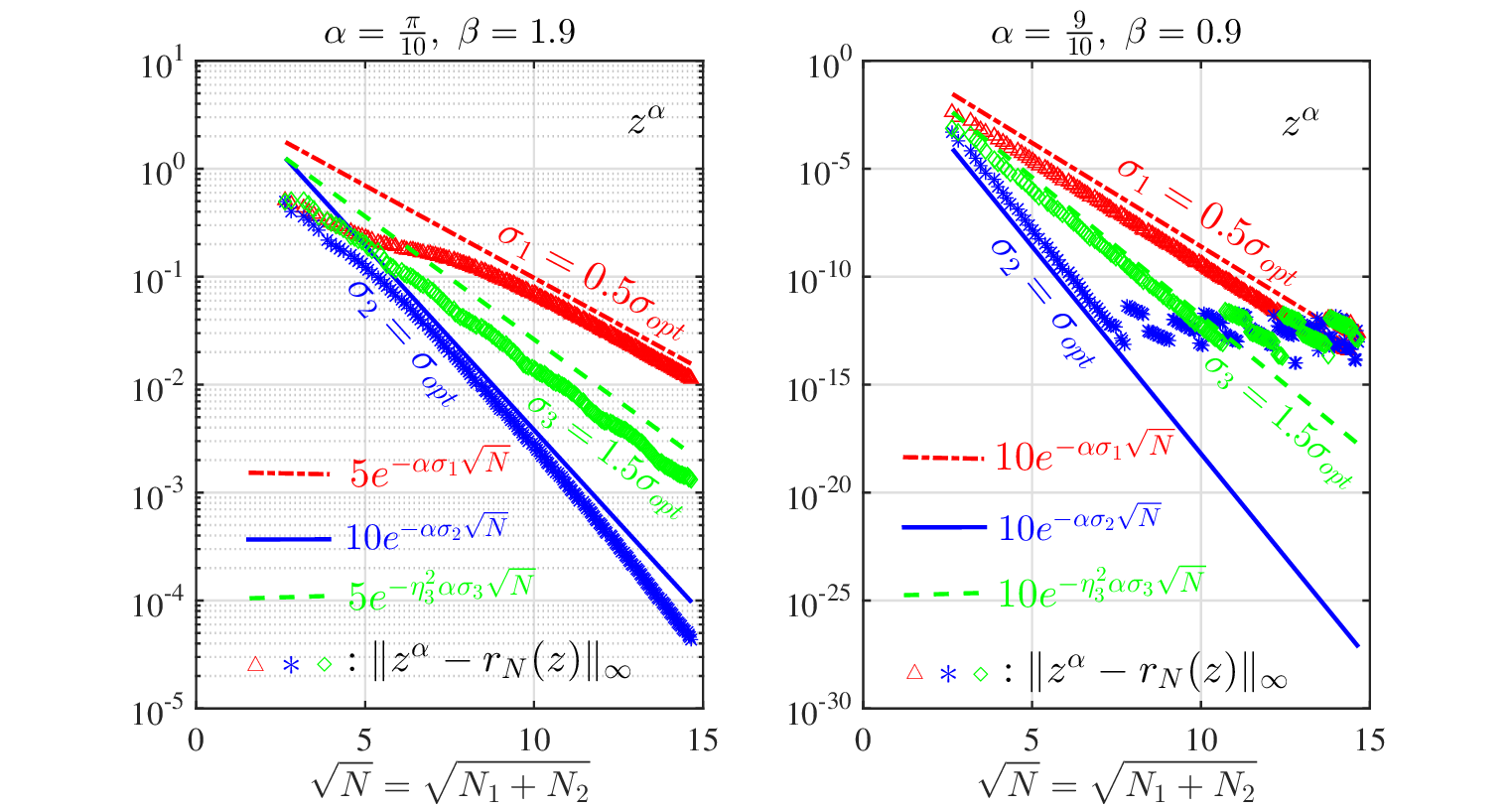}}
\centerline{\includegraphics[height=6cm,width=12cm]{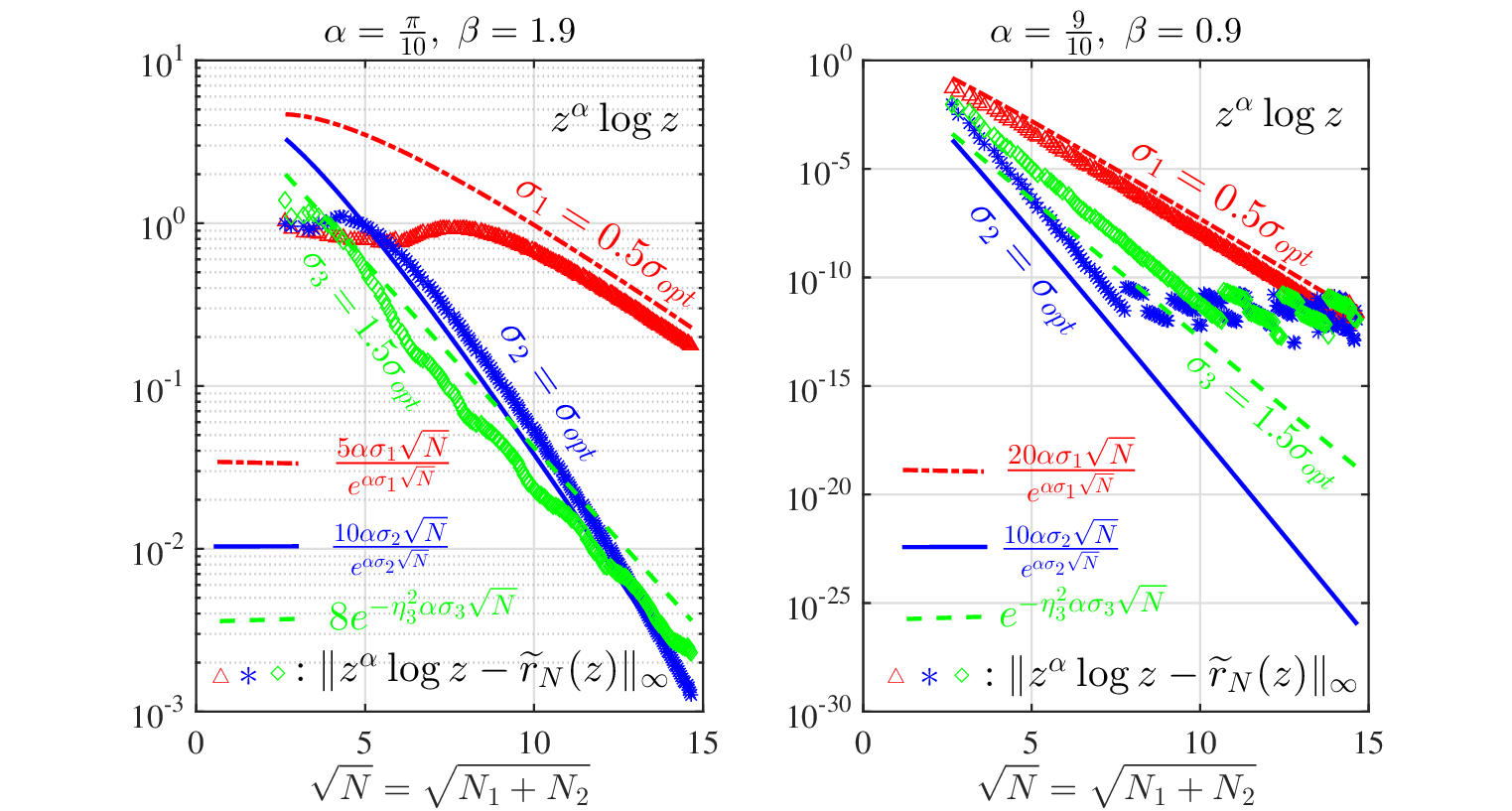}}
\caption{Convergence rates of the LPs for $z^{\alpha}$ and $z^{\alpha}\log{z}$ on $S_\beta$ with various values of $\alpha$, $\beta$ and
$\sigma=\sigma_l,\ l=1,2,3$, where $\sigma_{opt}=\frac{\sqrt{2(2-\beta)}\pi}{\sqrt{\alpha}}$ and $N=N_1+N_2$ with $N_1=4:100$ and $N_2={\rm ceil}(1.3N_1)$.}
\label{rates}
\end{figure}

\section{Approximations on corner domains}\label{app_cornerdomain}
From the decompositions by Cauchy integrals \cite[Theorem 2.3]{Gopal2019}, Theorems \ref{mainthm} and \ref{mainthm2} can be extended to the case in
which the domain $\Omega$ is a polygon (with each internal angle $<2\pi$), validated the presume ``in fact we believe convexity is not necessary'' \cite{Gopal2019}.

For Laplace PDEs, following \cite[Theorem 5]{Wasow},  the link between the type
of the corner singularity $z^\alpha$
and the angle of the corner $\beta$, $\beta\in (0,2)$ on a V-shaped domain is that the dominant asymptotic behaviour near the corner can be
described as $\mathcal{O}(z^{1/\beta})$ for $1/\beta$ non-integer and $\mathcal{O}(z^{1/\beta}\log z)$ for $1/\beta$ integer.
Then, we first give the following lemma for the Cauchy-type integrals.
\begin{lemma}\label{lemma_for_fk1}
Let $\mathcal{W}$ be a positive real number, $\alpha\in(0,1)$ a real number. Then there exist some power series $p_{s,k}(z)$ in $z$ and polynomials $P_{s}(\log{z})$ in $\log{z}$, $s=0,1$, $k=0,1,\cdots$,
such that
\begin{align}\label{fk_z_alpha}
\int_0^\mathcal{W}\frac{\zeta^{k+\alpha}}{\zeta-z}d\zeta
=z^{k+\alpha}P_{0}(\log{z})+p_{0,k}(z)
\end{align}
and
\begin{align}\label{fk_zalpha_log}
\int_0^\mathcal{W}\frac{\zeta^{k+\alpha}\log{\zeta}}{\zeta-z}d\zeta
=z^{k+\alpha}P_{1}(\log{z})+p_{1,k}(z),
\end{align}
where $p_{s,k}(z)$ $(s=0,1)$ converge for $|z|<\mathcal{W}$ and
\begin{align*}
P_{0}(\log{z})=&-\pi\cot{(\alpha\pi)}-i\pi,\\
P_{1}(\log{z})=&-\left[\pi\cot{(\alpha\pi)}
+i\pi\right]\log{z}+\pi^2\csc^2{(\alpha\pi)}.
\end{align*}
\end{lemma}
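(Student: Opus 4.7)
The plan is to reduce \cref{fk_z_alpha} for general $k$ to the prototype $k=0$ by a one-line algebraic recursion, evaluate the $k=0$ integral via the substitution $\zeta=zt$ together with a Plemelj--Sokhotsky calculation, and then obtain \cref{fk_zalpha_log} by differentiating in the parameter $\alpha$.

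For the recursion, the identity $\zeta^{k+1+\alpha}/(\zeta-z) = \zeta^{k+\alpha} + z\,\zeta^{k+\alpha}/(\zeta-z)$ integrated over $[0,\mathcal{W}]$ gives
\[
\int_0^\mathcal{W}\frac{\zeta^{k+1+\alpha}}{\zeta-z}\,d\zeta = \frac{\mathcal{W}^{k+1+\alpha}}{k+1+\alpha} + z\int_0^\mathcal{W}\frac{\zeta^{k+\alpha}}{\zeta-z}\,d\zeta,
\]
so by induction the general-$k$ integral equals $z^k$ times the $k=0$ integral plus a polynomial in $z$. Hence it suffices to establish \cref{fk_z_alpha} for $k=0$, after which $p_{0,k}(z) = z^k p_{0,0}(z) + (\text{polynomial})$ is automatically a power series convergent on $|z|<\mathcal{W}$.

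For $k=0$, substituting $\zeta=zt$ with a branch of $z^{\alpha}$ chosen so that the ray from $0$ to $\mathcal{W}/z$ avoids $t=1$ (e.g.\ $\arg z\in(-\pi,0)$ sends the ray into the upper half-plane above $t=1$) yields $\int_0^\mathcal{W}\zeta^{\alpha}/(\zeta-z)\,d\zeta = z^{\alpha}\int_0^{\mathcal{W}/z}t^{\alpha}/(t-1)\,dt$. Using $t^{\alpha}/(t-1)=t^{\alpha-1}+t^{\alpha-1}/(t-1)$, the first summand contributes $z^{\alpha}\cdot(\mathcal{W}/z)^{\alpha}/\alpha = \mathcal{W}^{\alpha}/\alpha$, an analytic constant. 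For the second I split $\int_0^{\mathcal{W}/z}=\int_0^\infty-\int_{\mathcal{W}/z}^\infty$. The $\int_0^\infty$ piece is evaluated from the classical identity $\mathrm{PV}\int_0^\infty t^{\alpha-1}/(t-1)\,dt = -\pi\cot(\alpha\pi)$ combined with the Plemelj contribution $-i\pi$ for passage above $t=1$, producing exactly $P_0(\log z)=-\pi\cot(\alpha\pi)-i\pi$. The tail $\int_{\mathcal{W}/z}^\infty$ is expanded via $1/(t-1)=\sum_{n\ge 0}t^{-n-1}$ (valid since $|\mathcal{W}/z|>1$ when $|z|<\mathcal{W}$) and integrated term by term; the prefactor $z^{\alpha}$ cancels the non-integer powers produced at the endpoint $\mathcal{W}/z$, leaving a convergent series in integer powers of $z$ which, added to $\mathcal{W}^{\alpha}/\alpha$, defines $p_{0,0}(z)$.

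Since the integrand in \cref{fk_z_alpha} is holomorphic in $\alpha$ with a locally uniform dominant on the compact $[0,\mathcal{W}]$, one may differentiate both sides in $\alpha$: the left becomes $\int_0^\mathcal{W}\zeta^{k+\alpha}\log\zeta/(\zeta-z)\,d\zeta$, while the right becomes $z^{k+\alpha}[P_0\log z+P_0'(\alpha)]+\partial_\alpha p_{0,k}(z)$. Since $P_0'(\alpha)=\pi^2\csc^2(\alpha\pi)$, the bracketed factor is precisely the stated $P_1(\log z)=-[\pi\cot(\alpha\pi)+i\pi]\log z+\pi^2\csc^2(\alpha\pi)$, and $p_{1,k}:=\partial_\alpha p_{0,k}$ remains a power series on $|z|<\mathcal{W}$. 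The main obstacle is the branch bookkeeping: the branches of $z^{k+\alpha}$ and $\log z$ must be specified consistently with the side of $[0,\mathcal{W}]$ on which $z$ lies so that the Plemelj term carries the correct sign $-i\pi$ and $\log z$ refers to the intended branch. Once that convention is pinned down, the remainder of the argument is an algebraic identity together with termwise integration justified by dominated convergence.
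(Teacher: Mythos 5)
Your argument is correct in substance but takes a genuinely different route from the paper. The paper's proof integrates by parts to reduce \cref{fk_z_alpha} and \cref{fk_zalpha_log} to integrals of the form $\int_0^{\mathcal{W}}\zeta^{k+\alpha-1}(\log\zeta)^s\log(1-z/\zeta)\,d\zeta$ and then invokes Lehman's expansion theorem, whose linear recurrence for the polynomial coefficients produces $d_{0,k}$, $d_{1,k}$ and hence $P_0$, $P_1$; both the $k$-dependence and the logarithmic case are absorbed into that cited machinery. You instead reduce to $k=0$ by the elementary recursion, evaluate the $k=0$ integral explicitly (substitution $\zeta=zt$, the classical value $\mathrm{PV}\int_0^\infty t^{\alpha-1}/(t-1)\,dt=-\pi\cot(\alpha\pi)$ plus the half-residue $-i\pi$, and a geometric-series tail that yields the explicit series $p_{0,0}(z)=\mathcal{W}^{\alpha}/\alpha-\sum_{n\ge0}\frac{\mathcal{W}^{\alpha-n-1}}{n+1-\alpha}\,z^{n+1}$), and then obtain \cref{fk_zalpha_log} by differentiating the identity in $\alpha$, which indeed reproduces $P_1$ because $\frac{d}{d\alpha}\left[-\pi\cot(\alpha\pi)-i\pi\right]=\pi^2\csc^2(\alpha\pi)$; since your $p_{0,k}$ are explicit, $\partial_\alpha p_{0,k}$ is visibly again a power series on $|z|<\mathcal{W}$. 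Your route is more self-contained (no appeal to Lehman) and even more explicit than the paper's, whereas the paper's route handles arbitrary powers of $\log\zeta$ uniformly and outsources the contour and branch bookkeeping. The one point you should state rather than defer: the single constant $-\pi\cot(\alpha\pi)-i\pi$ encodes a branch convention. With the principal branch your Plemelj computation is valid only for $\Im z<0$ (ray into the upper half $t$-plane, indentation above $t=1$); for $\Im z>0$ the same computation gives $+i\pi$, as forced by the jump $2\pi i\,\zeta^{k+\alpha}$ of the Cauchy integral across $(0,\mathcal{W})$. The formula of the lemma holds with one constant on the whole slit neighborhood only after fixing the branch of $z^{k+\alpha}$ and $\log z$ with cut along $[0,+\infty)$ (e.g. $\arg z\in(-2\pi,0)$), which is the convention implicit in the Lehman recurrence the paper quotes; once that is pinned down, the rest of your argument (termwise integration of the tail, differentiation under the integral sign) is routine and sound.
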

\begin{proof}
By integrating by parts it follows that
\begin{align}
\int_0^\mathcal{W}\frac{\zeta^{k+\alpha}}{\zeta-z}d\zeta
=&\zeta^{k+\alpha}\log{(\zeta-z)}\bigg|_0^\mathcal{W}
-(k+\alpha)\int_0^\mathcal{W}\zeta^{k+\alpha-1}\log{(\zeta-z)}d\zeta\notag\\
=&\mathcal{W}^{k+\alpha}\log{(\mathcal{W}-z)}
-(k+\alpha)\int_0^\mathcal{W}\zeta^{k+\alpha-1}
\log{\big(1-\frac{z}{\zeta}\big)}d\zeta
\label{first_term_integ}\\
&-(k+\alpha)\int_0^\mathcal{W}\zeta^{k+\alpha-1}\log{\zeta}d\zeta,\notag
\end{align}
and the first two terms in \cref{first_term_integ} can be represented as a series of $z$ convergent for $|z|<\mathcal{W}$ since wherein $\log{(\mathcal{W}-z)}$ is holomorphic.
Then by \cite[Theorems 4.1]{Lehman1954DevelopmentsIT} and \cite[Lemma 1]{Lehman1957} there exist power series $\mathcal{Q}_{s,k}(z)$, which converge for $|z|<\mathcal{W}$, and polynomials $\mathcal{P}_{s,k}(\log{z})
=\sum_{\ell=0}^{s}d_{\ell,k}\left(\log{z}\right)^{s-\ell}$, such that
\begin{align}\label{3rd_term}
\int_0^\mathcal{W}\zeta^{k+\alpha-1}(\log{\zeta})^s
\log{\left(1-\frac{z}{\zeta}\right)}d\zeta
=z^{k+\alpha}\mathcal{P}_{s,k}(\log{z})+\mathcal{Q}_{s,k}(z),
\end{align}
$s=0,1,\ k=0,1,\cdots.$
Additionally, the coefficients of $\mathcal{P}_{s,k}$ can be determined by the linear recurrence relation  \cite[proof of Theorem 4.1]{Lehman1954DevelopmentsIT}
\begin{align*}
\mathcal{P}_{s,k}(\log{z})-e^{-2(k+\alpha)\pi i}\mathcal{P}_{s,k}(\log{z-2\pi i})
=2\pi i\int_0^1\zeta^{k+\alpha-1}\left(\log{\zeta}+\log{z}\right)^sd\zeta.
\end{align*}
Then by setting $s=0$ and $s=1$, respectively,
it follows for
\begin{align}\label{Psk}
\mathcal{P}_{0,k}(\log{z})=d_{0,k},\hspace{.5cm}
\mathcal{P}_{1,k}(\log{z})=d_{0,k}\log{z}+d_{1,k},
\end{align}
that
\begin{align*}
d_{0,k}=&\frac{\pi}{k+\alpha}\frac{2i}{1-e^{-2(k+\alpha)\pi i}}=
\frac{\pi}{k+\alpha}\cot{(\alpha\pi)}+\frac{i\pi}{k+\alpha},\\
d_{1,k}=&-\frac{\pi}{(k+\alpha)^2}\frac{2i}{1-e^{-2(k+\alpha)\pi i}}
-\frac{\pi^2}{k+\alpha}
\left[\frac{2ie^{-(k+\alpha)\pi i}}{1-e^{-2(k+\alpha)\pi i}}\right]^2\\
=&-\frac{d_{0,k}}{k+\alpha}
-\frac{\pi^2}{k+\alpha}\csc^2{(\alpha\pi)}.
\end{align*}
By substituting $\mathcal{P}_{0,k}(\log{z})=d_{0,k}$ and \cref{3rd_term} (the case $s=0$) into \cref{first_term_integ}, we arrive at \cref{fk_z_alpha}.

For \cref{fk_zalpha_log}, in a similar way we have by \cref{3rd_term} that
\begin{align}
\int_0^\mathcal{W}\frac{\zeta^{k+\alpha}\log{\zeta}}{\zeta-z}d\zeta
=&\mathcal{W}^{k+\alpha}\log{\mathcal{W}}\log{(\mathcal{W}-z)}
-\int_0^\mathcal{W}\left[1+(k+\alpha)\log{\zeta}\right]\zeta^{k+\alpha-1}\log{\zeta}
d\zeta\notag\\
&-(k+\alpha)\int_0^\mathcal{W}
\zeta^{k+\alpha-1}\log{\zeta}\log{\left(1-\frac{z}{\zeta}\right)}d\zeta\notag\\
&-\int_0^\mathcal{W}
\zeta^{k+\alpha-1}\log{\left(1-\frac{z}{\zeta}\right)}d\zeta\notag\\
=&\mathcal{W}^{k+\alpha}\log{\mathcal{W}}\log{(\mathcal{W}-z)}
-\int_0^\mathcal{W}\left[1+(k+\alpha)\log{\zeta}\right]\zeta^{k+\alpha-1}\log{\zeta}
d\zeta\label{first_term_integ_log}\\
&-\mathcal{Q}_{0,k}(z)-(k+\alpha)\mathcal{Q}_{1,k}(z)\notag\\
&-z^{k+\alpha}\left[\mathcal{P}_{0,k}(\log{z})
+(k+\alpha)\mathcal{P}_{1,k}(\log{z})\right],\notag
\end{align}
then we complete the proof of \cref{fk_zalpha_log} by substituting \cref{Psk} into \cref{first_term_integ_log} and noticing the fact that both of $\mathcal{Q}_{s,k}(z)$ and $\log{\left(\mathcal{W}-z\right)}$ are holomorphic in $\{z:|z|<\mathcal{W}\}$.
\end{proof}

\begin{remark}
We use the special cases $s=0,1$ of \cite[Theorem 4.1]{Lehman1954DevelopmentsIT} and \cite[Lemma 1]{Lehman1957} for the proof of \cref{lemma_for_fk1}, wherein we found that the additional statement $\mathcal{P}_{s,k}(0)=0$ at the end of \cite[Theorem 4.1]{Lehman1954DevelopmentsIT} may be incorrect. However, this supplementary statement has been removed in the author's subsequent article \cite[Lemma 1]{Lehman1957}.
\end{remark}

By noticing the analyticity of $p_{s,k}(z)$, we have by the Weierstrass Theorem \cite[Theorem 4.1.10, Corollary 4.1.13]{Asmar2018ComplexAW} that
\begin{corollary}\label{coro:dec_sing}
Let the conditions of \cref{lemma_for_fk1} hold and $g(z)$ be holomorphic on
$\left\{z:|z|\le\mathcal{W}\right\}$.
Then there exist two functions $\mathcal{H}_0(z)$
and $\mathcal{H}_1(z)$ holomorphic for $|z|\le\mathcal{W}$, such that
\begin{align}
\int_0^{\mathcal{W}}\frac{g(\zeta)\zeta^{\alpha}}{\zeta-z}d\zeta
=&z^{\alpha}g(z)P_0(\log{z})
+\mathcal{H}_0(z),\label{eq:decom_za_h}\\
\int_0^{\mathcal{W}}\frac{g(\zeta)\zeta^{\alpha}\log{\zeta}}{\zeta-z}d\zeta
=&z^{\alpha}g(z)P_1(\log{z})+\mathcal{H}_1(z).\label{eq:decom_za_log_h}
\end{align}
\end{corollary}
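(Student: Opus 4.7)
The plan is to reduce \cref{coro:dec_sing} to \cref{lemma_for_fk1} by expanding $g$ in a Taylor series and applying that lemma term-by-term. Since $g$ is holomorphic on $\{z:|z|\le \mathcal{W}\}$, it extends to an open disk $\{|z|<\mathcal{W}+\epsilon\}$ for some $\epsilon>0$ and admits a Taylor expansion $g(\zeta)=\sum_{k=0}^{\infty} g_k\zeta^k$ with Cauchy estimates $|g_k|\le M/(\mathcal{W}+\epsilon')^k$ for any $0<\epsilon'<\epsilon$. The uniform convergence of this series on the integration path $[0,\mathcal{W}]$ justifies interchanging summation and integration:
\[
\int_0^{\mathcal{W}}\frac{g(\zeta)\zeta^{\alpha}}{\zeta-z}\,d\zeta=\sum_{k=0}^{\infty} g_k\int_0^{\mathcal{W}}\frac{\zeta^{k+\alpha}}{\zeta-z}\,d\zeta.
\]

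Next, I apply \cref{lemma_for_fk1} to each summand and exploit the crucial fact that $P_0(\log z)=-\pi\cot(\alpha\pi)-i\pi$ is \emph{independent of $k$}; this permits the factorization
\[
\sum_{k=0}^{\infty} g_k\left[z^{k+\alpha}P_0(\log z)+p_{0,k}(z)\right]=z^{\alpha}P_0(\log z)\sum_{k=0}^{\infty} g_k z^k+\sum_{k=0}^{\infty} g_k p_{0,k}(z)=z^{\alpha} g(z)P_0(\log z)+\mathcal{H}_0(z),
\]
which yields \cref{eq:decom_za_h} with $\mathcal{H}_0(z):=\sum_{k=0}^{\infty} g_k p_{0,k}(z)$. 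For the logarithmic identity \cref{eq:decom_za_log_h}, the same argument works once one checks, via the explicit coefficients $d_{0,k},d_{1,k}$ computed in the proof of \cref{lemma_for_fk1}, that $P_1(\log z)=-[\pi\cot(\alpha\pi)+i\pi]\log z+\pi^2\csc^2(\alpha\pi)$ is likewise $k$-independent, yielding $\mathcal{H}_1(z):=\sum_{k=0}^{\infty} g_k p_{1,k}(z)$.

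The main obstacle is verifying the asserted holomorphy of $\mathcal{H}_s$. From the explicit decomposition $p_{0,k}(z)=\mathcal{W}^{k+\alpha}\log(\mathcal{W}-z)-(k+\alpha)\mathcal{Q}_{0,k}(z)-\mathcal{W}^{k+\alpha}\log\mathcal{W}+\mathcal{W}^{k+\alpha}/(k+\alpha)$ extracted in the proof of \cref{lemma_for_fk1}, one sees that $p_{0,k}$ is holomorphic on the open disk $\{|z|<\mathcal{W}\}$ and, on any compactum $K\subset\{|z|<\mathcal{W}\}$, grows at worst like a polynomial in $k$ times $\mathcal{W}^k$. Combined with the geometric Cauchy bound on $|g_k|$, this gives absolute and uniform convergence of $\sum_k g_k p_{0,k}(z)$ on $K$ with ratio at most $\mathcal{W}/(\mathcal{W}+\epsilon')<1$, and the Weierstrass convergence theorem (as cited from \cite{Asmar2018ComplexAW}) then identifies $\mathcal{H}_0$ as holomorphic on $\{|z|<\mathcal{W}\}$. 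The argument for $\mathcal{H}_1$ is identical apart from a routine modification to bound the extra $\log\zeta$ factor appearing in $p_{1,k}$.
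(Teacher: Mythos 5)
Your proposal is correct and follows essentially the same route as the paper's own proof: Taylor-expand $g$, interchange sum and integral, apply \cref{lemma_for_fk1} term-by-term while factoring out the $k$-independent $P_s(\log z)$, and invoke the Weierstrass theorem to identify $\mathcal{H}_s(z)=\sum_k g_k p_{s,k}(z)$ as holomorphic. If anything, your explicit Cauchy-estimate argument for the uniform convergence of $\sum_k g_k p_{s,k}(z)$ on compacta is more detailed than the paper's brief observation.
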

\begin{proof}
From \cref{lemma_for_fk1}, we have that
\begin{align}
p_{0,k}(z)=&\int_0^\mathcal{W}\frac{\zeta^{k+\alpha}}{\zeta-z}d\zeta
-z^{k+\alpha}P_{0}(\log{z}),\label{psk}\\
p_{1,k}(z)=&\int_0^\mathcal{W}\frac{\zeta^{k+\alpha}\log{\zeta}}{\zeta-z}d\zeta
-z^{k+\alpha}P_{1}(\log{z})\label{psklog}
\end{align}
are holomorphic for $|z|<\mathcal{W}$. By the analyticity of $g(z)$, then it can be expressed as $g(z)=\sum_{k=0}^{\infty}\ell_kz^k$ uniformly convergent for $|z|\le\mathcal{W}$, which implies by \cref{fk_z_alpha} and \cref{fk_zalpha_log} that
\begin{align*}
\int_0^{\mathcal{W}}\frac{g(\zeta)\zeta^{\alpha}}{\zeta-z}d\zeta
&=\sum_{k=0}^{\infty}\ell_k\int_0^{\mathcal{W}}
\frac{\zeta^{k+\alpha}}{\zeta-z}d\zeta
=z^{\alpha}g(z)P_0(\log{z})
+\sum_{k=0}^{\infty}\ell_kp_{0,k}(z)
\end{align*}
and
\begin{align*}
\int_0^{\mathcal{W}}\frac{g(\zeta)\zeta^{\alpha}\log{\zeta}}{\zeta-z}d\zeta
&=\sum_{k=0}^{\infty}\ell_k\int_0^{\mathcal{W}}
\frac{\zeta^{k+\alpha}\log{\zeta}}{\zeta-z}d\zeta
=z^{\alpha}g(z)P_1(\log{z})
+\sum_{k=0}^{\infty}\ell_kp_{1,k}(z).
\end{align*}

We observe from \cref{psk} and \cref{psklog} that both of $\sum_{k=0}^{\infty}\ell_kp_{0,k}(z)$ and $\sum_{k=0}^{\infty}\ell_kp_{1,k}(z)$ uniformly converge for $|z|\le \mathcal{W}$, and with the help of Weierstrass Theorem \cite[Theorem 4.1.10, Corollary 4.1.13]{Asmar2018ComplexAW} it follows that, both of $$\mathcal{H}_0(z):=\sum_{k=0}^{\infty}\ell_kp_{0,k}(z)
\hspace{.3cm} \mbox{ and } \hspace{.3cm} \mathcal{H}_1(z):=\sum_{k=0}^{\infty}\ell_kp_{1,k}(z)$$
are holomorphic for $|z|<\mathcal{W}$. Thus, we complete the proof of \cref{eq:decom_za_h} and \cref{eq:decom_za_log_h}.
\end{proof}

%

\bigskip
We sketch
\textbf{the proof of \Cref{corner}} as follows.
\begin{proof}
From the proof of \cite[Theorem 2.3]{Gopal2019}, $f(z)$ can be written as a sum of $2m$ Cauchy-type integrals
\begin{align*}
f(z)=\frac{1}{2\pi i}\sum_{k=1}^m\int_{\Lambda_k}\frac{f(\zeta)}{\zeta-z}d\zeta
+\frac{1}{2\pi i}\sum_{k=1}^m\int_{\Gamma_k}\frac{f(\zeta)}{\zeta-z}d\zeta
=:\sum_{k=1}^mf_k(z)+\sum_{k=1}^mg_k(z),
\end{align*}
where  $\Lambda_k$ consists of the two sides of an exterior bisector at $w_k$, and $\Gamma_k$ connects the end of the slit contour at vertex $k$ to the beginning of the slit contour at vertex $k+1$ (denote $w_{m+1}=w_1$), see {\sc Fig.} \ref{decompose_path}, for example. Thus, every $g_k$ is holomorphic on a larger domain including $\Omega$, and $f_k$ holomorphic on a slit-disk region around $w_k$ with the slit line tilted and translated to lie around $\Lambda_k$, $k=1,\cdots, m$.

\begin{figure}[htbp]
\centerline{\includegraphics[width=8cm]{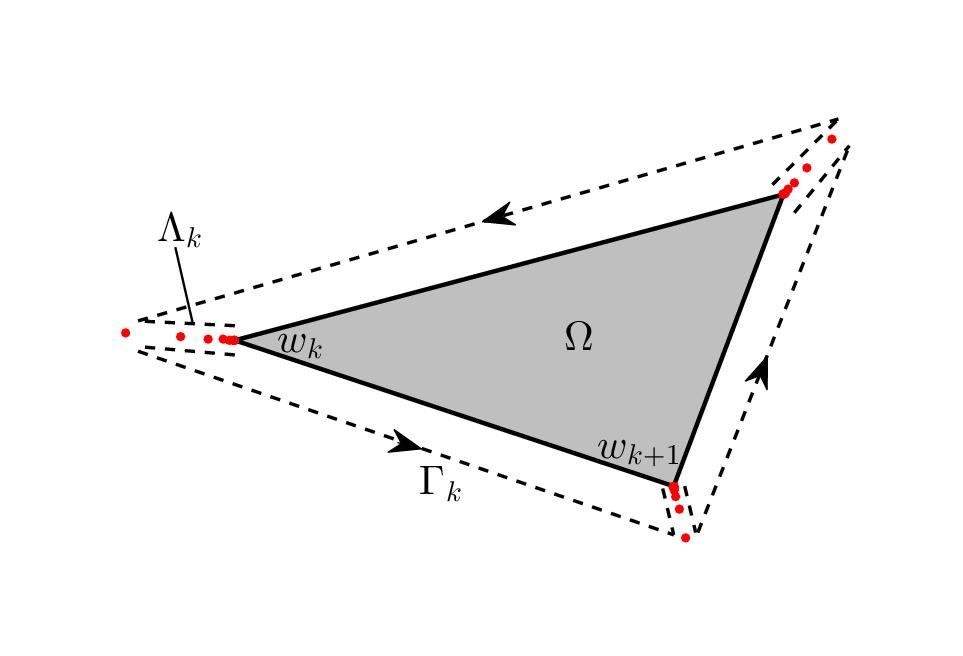}}
\caption{ \cite[{\sc Fig. 3}]{Gopal2019} A holomorphic function $f(z)$ defined on the corner domain $\Omega$ is decomposed as the sum of $2m$ Cauchy-type integrals: $\sum_{k=1}^mf_k(z)+\sum_{k=1}^mg_k(z)$, with $f_k(z)=\frac{1}{2\pi i}\int_{\Lambda_k}\frac{f(\zeta)}{\zeta-z}d\zeta$ along the two sides of an exterior bisector slit to each corner, and $g_k(z)=\frac{1}{2\pi i}\int_{\Gamma_k}\frac{f(\zeta)}{\zeta-z}d\zeta$ along each line segment connecting the ends of those slit contours.}
\label{decompose_path}
\end{figure}

Actually, both of $g_k$ and $f_k$ are holomorphic in $\mathbb{C}\setminus\Gamma_k$ and  $\mathbb{C}\setminus\Lambda_k$, respectively, according to the property of Cauchy-type integral \cite[Theorem 3.8.5]{1984Complex}. Hence, by the proof of Runge's theorem \cite[pp. 76-77]{Gaier1987} the summation $\sum_{k=1}^mg_k(z)$ can be approximated by a polynomial $\mathcal{T}(z)$ of degree of order $\mathcal{O}(\sqrt{N})$ on $\Omega$ with exponential convergence analogous to \cite{Gopal2019}.

Furthermore, from \cref{coro:dec_sing} it follows that $f_k(z)$ in a neighborhood of $w_k$ can be represented as $(z-\omega_k)^{\alpha_k}h_k(z)$ plus a holomorphic function.
Then from \cref{mainthm}, $f_k(z)$ can be approximated by LP $r^{(k)}_{N}(z)$ with a root-exponential rate
\begin{equation}
|r^{(k)}_N(z)-f_k(z)|=\left\{\begin{array}{ll}
\mathcal{O}(e^{-\sigma\alpha_k\sqrt{N}}),&\sigma\le \sigma^{(k)}_{opt},\\
\mathcal{O}(e^{-\pi\eta_k\sqrt{2(2-\beta_k)N\alpha_k}}),&\sigma> \sigma^{(k)}_{opt},
\end{array}\right.\quad \eta_k:=\frac{\sigma^{(k)}_{opt}}{\sigma}
\end{equation}
bounded by $\mathcal{O}(e^{-\sigma\alpha\sqrt{N}})
=\mathcal{O}(e^{-\pi\sqrt{2(2-\beta)N\alpha}})$ for $k=1,2,\cdots,m$,
where we used the fact that $\sigma\alpha\le \sigma \alpha_k$ and $\eta_k\sqrt{2(2-\beta_k)N\alpha_k}=\sqrt{\frac{2(2-\beta_k)^2}{2-\beta}N\alpha}\ge \sqrt{2(2-\beta)N\alpha}$, and $r^{(k)}_N(z)$ is taken to have finite poles exponentially clustered along the exterior bisectors at the corners, with the number of poles near each $w_k$ grows at least in proportion to $N$ that approaches to $\infty$.
Then we establish \cref{eq: corrate1} and complete the proof of \cref{corner}.
\end{proof}

\begin{remark}
From the proof of \cref{corner}, it is evident that one can approximate $f_k(z)$ using the LP $r_{N}^{(k)}(z)$ with $\sigma_{k}=\frac{\sqrt{2(2-\beta_k)}}{\sqrt{\alpha_k}}$. From \cref{mainthm}, it achieves
\begin{align}\label{eq:rcorner}
|r_n(z)-f(z)|=\mathcal{O}(e^{-\min_{1\le k\le m}\pi\sqrt{2(2-\beta_k)N\alpha_k}}).
\end{align}
In this case, the tapered exponential clustering of the poles at each $w_k$ with different $\sigma_k$.
\end{remark}

\begin{remark}
Moreover, from
Wasow \cite[Theorem 5]{Wasow} we see that  in a corner domain,  the dominant asymptotic behavior near the corner $w_k$ with  interior angle $\beta_k\pi$ ($\beta_k\in (0,2)$) can be described as $\mathcal{O}(z^{1/\beta_k})$ for non-integer values of $1/\beta_k$, while $\mathcal{O}(z^{1/\beta_k}\log z)$ for  $1/\beta_k$ being an integer. Then we can choose $\sigma=\frac{\sqrt{2(2-\beta)}\pi}{\sqrt{\alpha}}=\sqrt{2(2-\beta_{k_0})\beta_{k_0}}\pi$ in \cref{corner} and obtain the same rate as \cref{eq:rcorner}
$$
|r_n(z)-f(z)|=\mathcal{O}\left(e^{-\pi{\sqrt{\frac{2(2-\beta_{k_0})}{\beta_{k_0}}N}}}\right)
$$
for every $1/\beta_k$ non-integer, where $\beta_{k_0}=\max_{1\le k\le m}\beta_k$.

 If some values of  $1/\beta_k$  are integers, the rate of $|r_n(z)-f(z)|$ can be readily obtained from \cref{mainthm} and \cref{mainthm2}.

\end{remark}


{\sc Fig.}\ref{Lalace_0} illustrates the robustness of the LP on the corner domain $\Omega$ (the concave quadrilateral domain in {\sc Fig.} \ref{general_domain}) by a Laplace equation $\Delta u=0$ with Dirichlet condition $u=[\Re(z)]^2,\ z\in\partial\Omega$, which is the default boundary condition of the Matlab function \texttt{laplace} in \cite{Gopal2019}.
{\sc Fig.} \ref{Lalace_0} also shows that the globally optimal value $\sigma_{opt}=\sqrt{2(2-\beta)\beta}\pi$ ($\beta=\beta_3$ w.r.t. the largest interior angle) is slightly more efficient than $4$, which is often employed in the previous practical computations \cite{Brubeck2022,Gopal20191,Trefethen2021}.

Inspired by the weaker singularity at the corners with smaller internal angles, we often reduce appropriately the clustering poles there, with little effect on the rate of convergence (see {\sc Fig.} \ref{Lalace_1}). Additionally, an experiment of the same boundary value problem on the curvy L-shaped domain are also illustrated in {\sc Fig}. \ref{LalaceCL}, wherein the three decay behaviors exhibit very small differences, as their corresponding clustering parameters do not vary significantly ($\sigma=4$ and $\sigma_{opt}\approx4.30$). We also see that all of these domains are included in some sector domains centred at every vertex $w_k$ with interior angle $\beta_k\pi,\ 0\le\beta_k\le\beta\le2$.

\begin{figure}[htbp]
\centerline{\includegraphics[width=16cm]{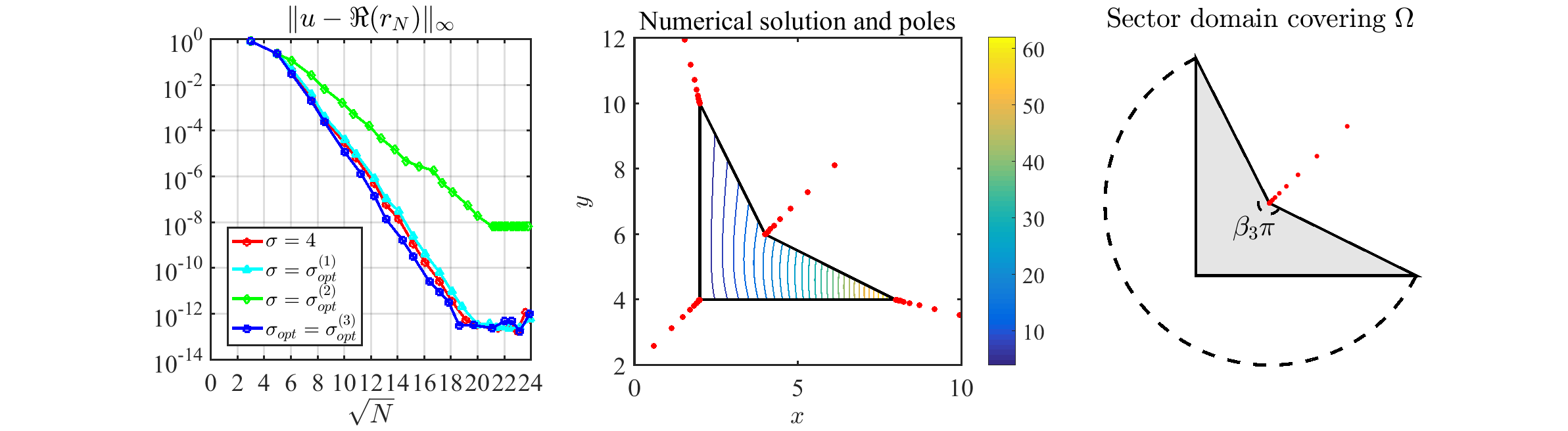}}
\caption{The decay rates (left) of errors of the numerical solutions for the Laplace equation on the concave quadrilateral domain $\Omega$ (whose vertices are: $w_1=2+4i,\ w_2=8+4i,\ w_3=4+6i,\ w_4=2+10i$) with various values of $\sigma$: $\sigma^{(k)}_{opt}=\sqrt{2(2-\beta_k)\beta_k}\pi$, which corresponds to $w_1,\ w_2,\ w_3$. Additionally, $\sigma_{opt}=\sigma^{(3)}_{opt}$ is the globally optimal clustering parameter, and $\sigma=4$ is often employed in the previous practical computations. The second subplot displays the contour plot of numerical solution and the distribution of clustering poles (red points) with respect to $\sigma_{opt}$. Obviously, the domain $\Omega$ is covered by a sector domain centred at $w_3$ with the largest interior angel $\beta_3\pi$, see the third subplot.}
\label{Lalace_0}
\end{figure}

\begin{figure}[htbp]
\centerline{\includegraphics[width=12cm]{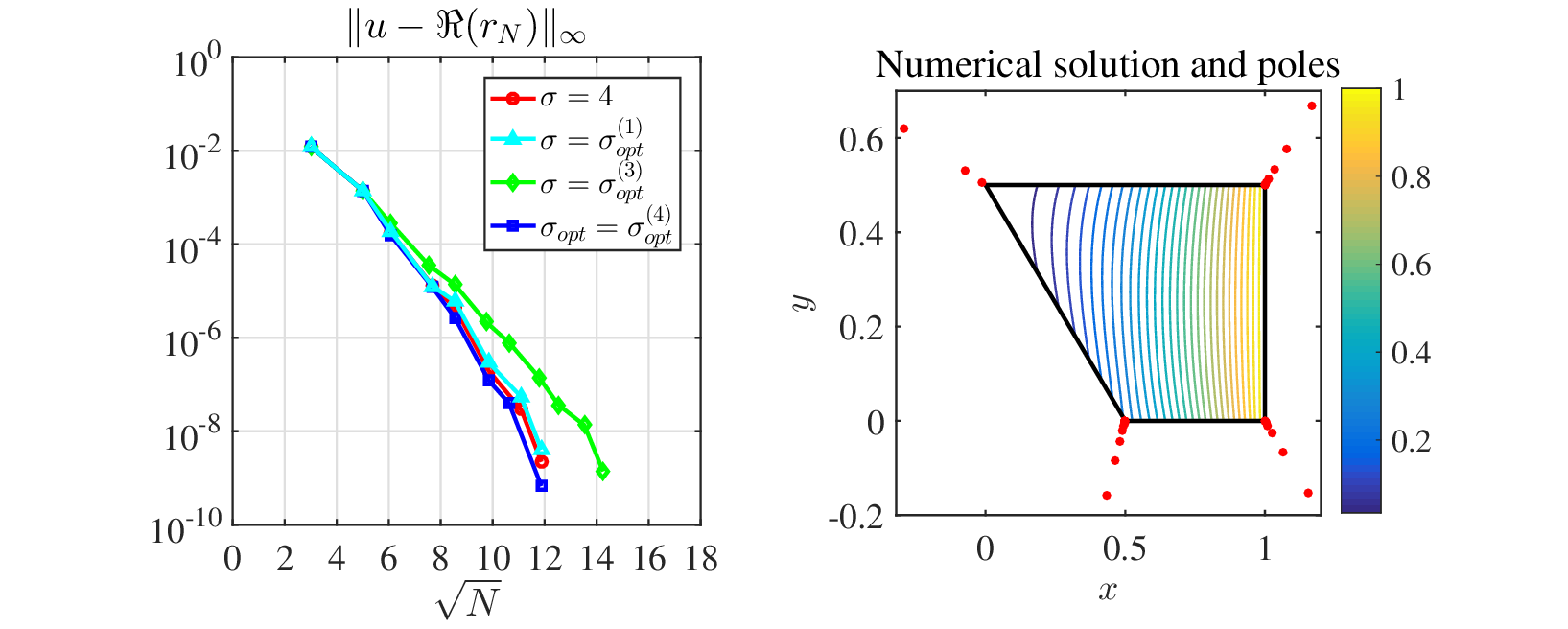}}
\caption{The decay rates (left) of errors of the numerical solutions for the Laplace equation on a quadrilateral domain $\Omega$ with vertices $w_1=1,\ w_2=1+0.5,\ w_3=0.5i,\ w_4=0.5$ with various values $\sigma_k=\sqrt{2(2-\beta_k)\beta_k}\pi$ for $\sigma$, where $\beta_1=\frac{1}{2},\ \beta_3=\frac{1}{4},\ \beta_4=\frac{3}{4}$ corresponding to $w_1,\ w_3,\ w_4$. The globally optimal clustering parameter is  $\sigma_{opt}=\sigma^{(4)}_{opt}$.  The second subplot displays the contour plot of numerical solution and the distribution of clustering poles (red points) with respect to $\sigma_{opt}$, whose clustering density is reduced appropriately at the corners with smaller internal angles.}
\label{Lalace_1}
\end{figure}

\begin{figure}[htbp]
\centerline{\includegraphics[width=16cm]{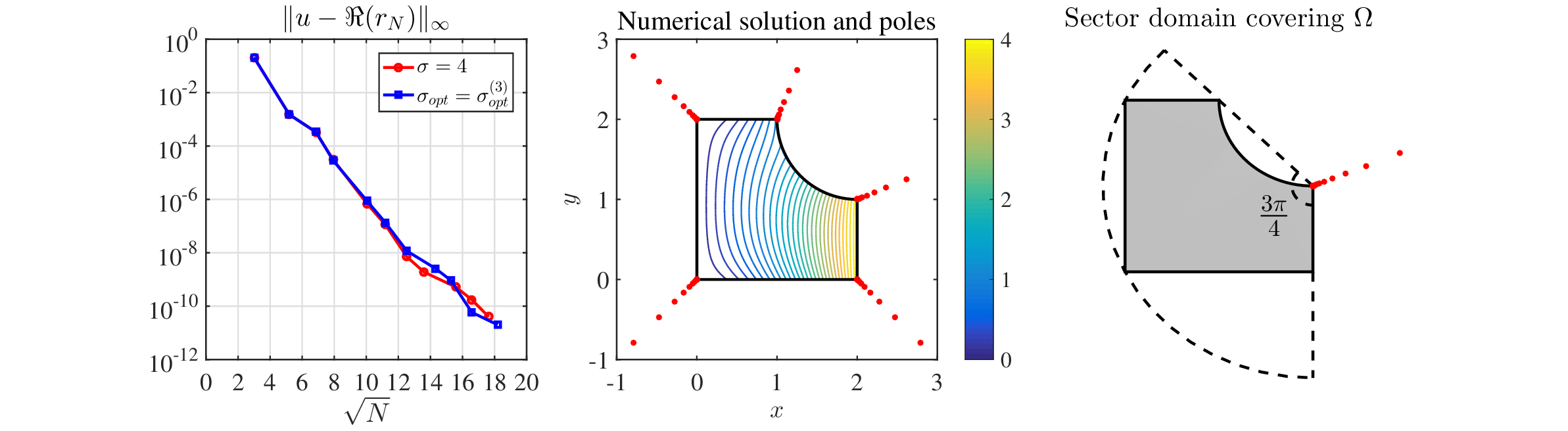}}
\caption{The decay rate (left) of errors of the numerical solution for the Laplace equation on the curvy L-shaped domain $\Omega$ determined by the vertices $w_1=0,\ w_2=2,\ w_3=2+i,\ w_4=1+2i,\ w_5=2i$. We choose the clustering parameter $\sigma=4$ and the globally optimal $\sigma_{opt}=\sigma^{(3)}_{opt}=\sqrt{2(2-\beta_3)\beta_3}\pi$, $\beta_3=\frac{3}{4}$ corresponding to $w_3$. The contour plot of numerical solution and distribution of clustering poles (red points) with respect to $\sigma_{opt}$ also are displayed in the second subplot. Here the domain $\Omega$ also can be covered by a sector domain centred at $w_3$ with the largest interior angel $\frac{3\pi}{4}$.}
\label{LalaceCL}
\end{figure}


\section{Conclusions}
\label{conclusion}
Utilizing  Poisson summation formula, Runge's approximation theorem and with the aid of Cauchy's integral theorem,  this paper rigorously proves  the proposed conjecture of the lightning $+$ polynomial rational approximation in a V-shaped domain and extends to general algebraical and logarithmatic singularities.
In addition, from Lehman and Wasow's contributions to the study of  corner singularities of solutions of
partial differential equations \cite{{Lehman1954DevelopmentsIT}, Wasow}, together with the decomposition of Gopal and Trefethen \cite{Gopal2019}, it leads to theoretical analysis of a root-exponential rate for efficient lightning plus polynomial schemes in corner domains \cite{Herremans2023}.
\begin{appendix}
\section{Proofs of \cref{la3}, \cref{lemma_inte_circ}, \cref{infty} and \cref{LemmalogA}}\label{AppendixA}
We present the lemmas for the case $n\ge1$, and those for $n\le-1$ can be proven in the same approach. Here we are concerned with the uniform bounds independent of $x\in [x^*,1]$ and $\theta\in [0,\beta]$ in a V-shaped domain different from \cite{XY2023}. Again, to avoid repetition, only the case $z=z^+=xe^{\frac{\theta\pi}{2}i}$ is proved here, and the other case $z=z^-=xe^{-\frac{\theta\pi}{2}i}$ can be checked in the exactly same manner.
\begin{lemma}\label{la3}
Let $A_0=a_0+a$ and $A_1=a_1+a$. Then
\begin{align*}
&\bigg{|}\int_{0}^{A_1}f(h+it,z)
e^{-\frac{2n\pi}{h}t}dt-\int_{0}^{A_0}f(h-it,z)
e^{-\frac{2n\pi}{h}t}dt\bigg{|}\\
=&\mathcal{O}(e^{-T})
\int_0^{+\infty}te^{\sqrt{t}}e^{-\frac{2n\pi}{h}t}dt
\end{align*}
holds uniformly and independent of $\theta\in [0,\beta]$ and $x\in [x^*,1]$ for $z\in \mathcal{A}^*_\theta$.
\end{lemma}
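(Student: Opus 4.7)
The plan is to reduce the estimate to pointwise bounds on $f(h\pm it,z)$ along the two vertical segments. Since $a_0<a_1$ and hence $A_0<A_1$, I first split the difference of integrals as
\begin{equation*}
\int_0^{A_0}\bigl[f(h+it,z)-f(h-it,z)\bigr]e^{-2n\pi t/h}\,dt+\int_{A_0}^{A_1}f(h+it,z)e^{-2n\pi t/h}\,dt.
\end{equation*}
The integrand of the first piece vanishes at $t=0$, which is exactly what produces the extra factor of $t$ in the target bound.

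Next, I would establish pointwise estimates on $f(h\pm it,z)$ for $t\in[0,A_1]$. Using $\Re\sqrt{w}\le|w|^{1/2}\le\sqrt{h}+\sqrt{t}$ for $w=h\pm it$ yields $|e^{\sqrt{h\pm it}-T}|\le e^{-T+\sqrt{h}+\sqrt{t}}$, while $|\sqrt{h\pm it}|\ge\sqrt{h}$ gives $1/(2|\sqrt{h\pm it}|)\le 1/(2\sqrt{h})$. For the denominator $|Ce^{(\sqrt{h\pm it}-T)/\alpha}+z|$, writing $\rho=Ce^{(\sqrt{h\pm it}-T)/\alpha}/z$ one has $|\rho|\le(x^*/x)e^{(\sqrt{h}+\sqrt{t}-c_0)/\alpha}$; since every pole $u_k$ of $f(\cdot,z)$ satisfies $\Re u_k\ge M_0h\gg h$, the line $\Re u=h$ is well-separated from all poles, and the conditions defining $M_0$ in \cref{eq:real} together with the offset $\delta_0$ in \cref{eq:realM0} keep $|1+\rho|$ uniformly bounded below (using $|1+\rho|\ge\max(|\rho|,1)|\sin\arg\rho|$ when $|\rho|\sim 1$ and $|1+\rho|\ge\bigl||\rho|-1\bigr|$ otherwise). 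Combining these yields $|f(h\pm it,z)|\le Ke^{-T}e^{\sqrt{t}}$ uniformly in $x\in[x^*,1]$ and $\theta\in[0,\beta]$, and differentiating the explicit formula of $f$ gives a matching bound $|\partial_u f(h+is,z)|\le K'e^{-T}e^{\sqrt{s}}$.

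For the first piece I would use $f(h+it,z)-f(h-it,z)=i\int_{-t}^{t}\partial_u f(h+is,z)\,ds$ and the derivative bound above to obtain $|f(h+it,z)-f(h-it,z)|\le 2K't e^{-T}e^{\sqrt{t}}$, so its contribution is $\mathcal{O}(e^{-T})\int_0^\infty te^{\sqrt{t}}e^{-2n\pi t/h}\,dt$. For the second piece, since $t\ge A_0\ge(2-\beta)\alpha\pi c_0>0$ is bounded below by a fixed positive constant, the inequality $e^{\sqrt{t}}\le(t/A_0)e^{\sqrt{t}}$ absorbs the missing $t$ factor, again giving the same $\mathcal{O}(e^{-T})\int_0^\infty te^{\sqrt{t}}e^{-2n\pi t/h}\,dt$ form. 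The main technical obstacle is the uniform denominator bound along $\Re u=h$: the quantity $|Ce^{(\sqrt{h\pm it}-T)/\alpha}+z|$ is delicate precisely when $|\rho|\approx 1$ (which can happen near $x=x^*$) together with $\arg\rho\approx\pi\pmod{2\pi}$, and the choice of $c_0$, $M_0$, and $\delta_0$ is engineered to rule out such near-cancellations uniformly in $z\in\mathcal{A}^*_\theta$.
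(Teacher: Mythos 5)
Your overall architecture is fine (split off the part where the integrand vanishes at $t=0$ to gain the factor $t$, then pointwise bounds elsewhere; the extra piece $\int_{A_0}^{A_1}$ handled by $t/A_0\ge 1$), and it parallels the paper's proof, which splits instead at $t=1$ and uses a derivative bound only on the compact set $[-1,1]$. But the step you yourself flag as ``the main technical obstacle'' — the uniform lower bound on $\big|Ce^{(\sqrt{h\pm it}-T)/\alpha}+z\big|$ along $\Re u=h$ — is precisely the substance of this lemma, and your proposal does not prove it. Your bound $|\rho|\le(x^*/x)\,e^{(\sqrt h+\sqrt t-c_0)/\alpha}$ is correct but by itself does not keep $|\rho|$ away from $1$: for $x$ near $1$ the integration range extends to $t\approx A_1=(4+\theta)\alpha\pi\,(T+\alpha\log\frac{x}{C})$, so $\sqrt t$ grows like $\sqrt{T}$ and the exponent $\sqrt h+\sqrt t-c_0\to+\infty$. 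The hedge ``$|1+\rho|\ge\max(|\rho|,1)|\sin\arg\rho|$ when $|\rho|\sim1$'' is not backed by any control of $\arg\rho$, and the appeal to $\delta_0$ is misplaced: $\delta_0$ in \cref{eq:realM0} only guarantees $c_0^2\neq jh$ and has nothing to do with ruling out near-cancellation on the line $\Re u=h$. Likewise, ``the line $\Re u=h$ is well-separated from all poles'' gives no bound that is uniform in $T$ and $x$.

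What is actually true, and what the paper proves, is that the dangerous regime never occurs: writing $-z=Ce^{(\sqrt{u_0}-T)/\alpha}$ one has $\big|Ce^{(\sqrt u-T)/\alpha}+z\big|=|z|\,\big|e^{(\sqrt u-\sqrt{u_0})/\alpha}-1\big|$ with $|\rho|=e^{\Re(\sqrt{h\pm it}-\sqrt{u_0})/\alpha}$, and on the whole segment (for $t\ge 1$, say) $\Re(\sqrt{h\pm it})\le\sqrt{A_1/2}+\sqrt h\le\sqrt{(4+\beta)\alpha\pi/2}\,\sqrt y+\sqrt h$ while $\Re(\sqrt{u_0})=y:=T+\alpha\log\frac{x}{C}\ge c_0$; the size condition \cref{eq:real} on $M_0$ is engineered exactly so that $\sqrt{(4+\beta)\alpha\pi/2}\,\sqrt y+2\sqrt h\le y$ for all $y\ge c_0$, whence $\Re(\sqrt{h\pm it}-\sqrt{u_0})\le-\sqrt h$ and $|1+\rho|\ge 1-e^{-\sigma}$ uniformly in $x\in[x^*,1]$, $\theta\in[0,\beta]$, $T$ and $n$. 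Your own inequality contains the ingredients (note $\frac{x^*}{x}e^{-c_0/\alpha}=e^{-y/\alpha}$, and $t\le A_1\le(4+\beta)\alpha\pi y$), but this combination has to be carried out explicitly; until it is, the claimed pointwise bounds $|f(h\pm it,z)|\le Ke^{-T}e^{\sqrt t}$ and $|\partial_u f(h+is,z)|\le K'e^{-T}e^{\sqrt{|s|}}$ — and therefore both halves of your estimate — rest on an unproved assertion, and the route through $\delta_0$ and a case $|\rho|\sim1$ would not close it.
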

\begin{proof}
At first, we have
\begin{align*}
&f(h+it,z)-f(h-it,z)\\
=&e^{-T}\frac{\sin(\alpha\pi)}{2\alpha\pi}
\bigg{[}\frac{1}{\sqrt{h+it}}
\frac{zC^{\alpha}e^{\sqrt{h+it}}}{Ce^{\frac{1}{\alpha}(\sqrt{h+it}-T)}+z}
-\frac{1}{\sqrt{h-it}}
\frac{zC^{\alpha}e^{\sqrt{h-it}}}{Ce^{\frac{1}{\alpha}(\sqrt{h-it}-T)}+z}\bigg{]}\notag.
\end{align*}
Define
$$\phi(t,z)=\frac{1}{\sqrt{h+it}}
\frac{ze^{\sqrt{h+it}}}{Ce^{\frac{1}{\alpha}(\sqrt{h+it}-T)}+z},\quad t\in[-1,1],
$$
then $\phi(t,z)$ is analytic for $t\in [-1,1]$ and  $\partial_t\phi$ is continuous on $[-1,1]\times S_\beta$, and from \cite{Mcleod1965} it obtains
\begin{align*}
|\phi(t,z)-\phi(-t,z)|\le 2\|\partial_t\phi\|_{\infty}t,\quad t\in [-1,1]
\end{align*}
and
\begin{align*}
\bigg{|}\int_{0}^{1}(f(h+it,z)-(f(h-it,z))
e^{-\frac{2n\pi}{h}t}dt\bigg{|}\le e^{-T}\frac{C^\alpha\|\partial_t\phi\|_{\infty}\sin(\alpha\pi)}{\alpha\pi}\int_0^1te^{-\frac{2n\pi}{h}t}dt.
\end{align*}

Let
\begin{align*}
\varphi(t,z)=&\frac{z}
{Ce^{\frac{1}{\alpha}(\sqrt{h-it}-T)}+z},\quad t\in[1,A_0].
\end{align*}
Since $z\in\mathcal{A}^*_\theta$, from $\sqrt{M_0h}
\ge \big(\sqrt{(4+\beta)\alpha\pi/2}+\sqrt[4]{4h}\big)^2\ge 2\big(\sqrt{\alpha\pi}+\sqrt[4]{h}\big)^2$ it follows
\begin{align*}
\sqrt{\alpha\log{\frac{x}{C}}+T}=&\sqrt[4]{v_0+(2-\theta)^2\alpha^2\pi^2/4}
\ge\sqrt[4]{M_0h}
\ge\sqrt{2\alpha\pi}+\sqrt[4]{4h}\\
\ge&\frac{\sqrt{2\alpha\pi}+\sqrt{2\alpha\pi+8\sqrt{h}}}{2}
\end{align*}
and
\begin{align}\label{align}
\left(\sqrt{\alpha\log{\frac{x}{C}}+T}-\sqrt{\alpha\pi/2}\right)^2\ge \alpha\pi/2+2\sqrt{h}.
\end{align}

In addition, set $u=v+iw=re^{i\Theta}$ and $r=\sqrt{v^2+w^2}$. We have by $\cos\Theta=\frac{v}{r}$  and the half angle formula that
$$
\Re(\sqrt{u})=\sqrt{\frac{\sqrt{v^2+w^2}+v}{2}}.
$$
Then, together with $\Re\big(\sqrt{u_0}\big)=T+\alpha\log{\frac{x}{C}}\ge \frac{\sqrt{h}}{2}$ \cref{eq:realp}
and \cref{align}, we obtain  that
\begin{align*}
&\Re(\sqrt{h-it}-\sqrt{u_0})
=\sqrt{\frac{\sqrt{h^2+t^2}+h}{2}}-\Re(\sqrt{u_0})
\le\sqrt{\frac{\sqrt{h^2+A_0^2}+h}{2}}-\Re(\sqrt{u_0})\\
\le&\sqrt{A_0/2}+\sqrt{h}-\Re(\sqrt{u_0})
=\sqrt{(4-\theta)\alpha\pi/2}
\sqrt{\alpha\log{\frac{x}{C}}+T}+\sqrt{h}-\Re(\sqrt{u_0})\\
\le&\sqrt{2\alpha\pi}
\sqrt{\alpha\log{\frac{x}{C}}+T}+\sqrt{h}-(\alpha\log{\frac{x}{C}}+T)
\le-\sqrt{h}
=-\alpha\sigma,
\end{align*}
which yields
$\big|e^{\frac{1}{\alpha}(\sqrt{h-it}-\sqrt{u_0})}\big|\le e^{-\sigma}$ and
\begin{align}\label{eq:dom1}
\big{|}\varphi(t,z)\big{|}
=\frac{|z|}{|z+Ce^{\frac{1}{\alpha}(\sqrt{h-it}-T)}|}=\frac{1}{|e^{\frac{1}{\alpha}(\sqrt{h-it}-\sqrt{u_0})}-1|}
\le\frac{1}{1-e^{-\sigma}}.
\end{align}

Analogously, we have for $t\in [1,A_1]$ that
\begin{align}\label{eq:dom2}
\frac{|z|}{|z+Ce^{\frac{1}{\alpha}(\sqrt{h+it}-T)}|}=\frac{1}{|e^{\frac{1}{\alpha}(\sqrt{h+it}-\sqrt{u_0})}-1|}
\le\frac{1}{1-e^{-\sigma}}
\end{align}
and then for $t\in [0,A_0]$ or $t\in [0,A_1]$ respectively
\begin{align*}
\big{|}f(h\pm it,z)\big{|}\le e^{-T}\frac{C^\alpha\sin(\alpha\pi)}{2\alpha\pi}\frac{e^{\sqrt{t}+\sqrt{h}}}{\sqrt{t}(1-e^{-\sigma})}.
\end{align*}
Consequently, we get
\begin{align*}
&\bigg{|}\int_{0}^{A_1}f(h+it,z)
e^{-\frac{2n\pi}{h}t}dt-\int_{0}^{A_0}f(h-it,z)
e^{-\frac{2n\pi}{h}t}dt\bigg{|}\\
\le&\int_{0}^1\left|f(h+it,z)-f(h-it,z)\right|e^{-\frac{2n\pi}{h}t}dt\\
&+\int_{1}^{A_1}|f(h+it,z)|e^{-\frac{2n\pi}{h}t}dt
+\int_{1}^{A_0}|f(h-it,z)|e^{-\frac{2n\pi}{h}t}dt
\\
=&\mathcal{O}(e^{-T})\bigg[
\int_0^1te^{-\frac{2n\pi}{h}t}dt
+\left(\int_1^{A_1}+\int_1^{A_0}\right)\frac{e^{\sqrt{t}+\sqrt{h}}}{\sqrt{t}}
e^{-\frac{2n\pi}{h}t}dt\bigg]\\
=&\mathcal{O}(e^{-T})
\int_0^{+\infty}te^{\sqrt{t}}e^{-\frac{2n\pi}{h}t}dt
\end{align*}
independent of $\theta\in [0,\beta]$ and $x\in [x^*,1]$ for $z\in \mathcal{A}^*_\theta$, which leads to the desired result.
\end{proof}

\bigskip
\begin{lemma}\label{lemma_inte_circ}
Let $f(u,z)$ be defined in \cref{eq:func} with $z\in\mathcal{A}^*_\theta$. Suppose for some fixed sufficiently large  $N_0$ independent of $z$ and $T$ that
$$
C^{-}_{\rho}=\{z=u_0+\rho e^{i\vartheta},\vartheta:0\rightarrow-2\pi\},\ \
C^{+}_{\rho}=\{z=u_1+\rho e^{i\vartheta},\vartheta:0\rightarrow2\pi\}
$$
with $0<\rho=\frac{1}{2}\min\left\{\alpha^2\pi^2,a_{0,\beta},\frac{1}{N_0}\right\}$,
then
\begin{align}
\bigg{|}\int_{C^{-}_{\rho}}
f(u,z)e^{-i\frac{2n\pi}{h}u}du\bigg{|}
=&e^{-\frac{2n\pi}{h}a_0}x^{\alpha}\mathcal{O}(1),\label{55555}\\
\bigg{|}\int_{C^{+}_{\rho}}
f(u,z)e^{i\frac{2n\pi}{h}u}du\bigg{|}
=&e^{-\frac{2n\pi}{h}a_1}x^{\alpha}\mathcal{O}(1)\label{66666}
\end{align}
hold for all $T$ and the constants in terms $\mathcal{O}$ are independent of $\theta\in [0,\beta]$ and $x\in [x^*,1]$ for $z\in \mathcal{A}^*_\theta$.
\end{lemma}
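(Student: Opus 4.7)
The plan is to evaluate both integrals exactly via the residue theorem, so that the only remaining work is bounding a residue and an exponential. First I would verify that $C^{-}_\rho$ is a legitimate contour enclosing a single simple pole. Since $u_0 = v_0 - ia_0$ with $v_0 > M_0h$ from \cref{eq:realM0}, and $\rho \le \tfrac{1}{2}\alpha^2\pi^2 < v_0$ by the choice of $M_0$ in \cref{eq:real}, the closed disc $|u-u_0|\le\rho$ lies in $\{\Re(u)>0\}$ and so avoids the branch cut of $\sqrt{u}$. The remaining poles of $f(\cdot,z^+)$ from \cref{eq:all_poles_fux_C} all lie at distance $\ge 4\alpha\pi(T+\alpha\log\tfrac{x}{C}) > 2\rho$ from $u_0$ (using the bounds built into \cref{eq:real}), so $u_0$ is the only pole of $f(\cdot,z)$ enclosed by $C^{-}_\rho$; the additional constraint $\rho\le\tfrac{1}{2}a_{0,\beta}$ keeps the whole circle in the lower half-plane.

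Next I would compute $\mathrm{Res}(f(\cdot,z),u_0)$. Writing $g(u)=Ce^{\frac{1}{\alpha}(\sqrt{u}-T)}+z$, the defining relation $g(u_0)=0$ gives $e^{\frac{1}{\alpha}(\sqrt{u_0}-T)}=-z/C$, and differentiation yields $g'(u_0) = -z/(2\alpha\sqrt{u_0})$. Combined with the identification $\sqrt{u_0}-T = \alpha\log\tfrac{x}{C} - i\alpha\pi(1-\tfrac{\theta}{2})$ supplied by \cref{eq:realp} and \cref{eq:imagp}, this reduces the residue to
\begin{equation*}
\mathrm{Res}(f(\cdot,z),u_0) \;=\; -\tfrac{\sin(\alpha\pi)}{\pi}\,C^\alpha e^{\sqrt{u_0}-T} \;=\; -\tfrac{\sin(\alpha\pi)}{\pi}\,x^\alpha e^{-i\alpha\pi(1-\theta/2)},
\end{equation*}
whose modulus is $\tfrac{\sin(\alpha\pi)}{\pi}x^\alpha$ uniformly in $\theta\in[0,\beta]$ and $x\in[x^*,1]$.

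Applying the residue theorem with the clockwise orientation $\vartheta:0\to-2\pi$ then produces
\begin{equation*}
\int_{C^{-}_\rho} f(u,z)\,e^{-i\frac{2n\pi}{h}u}\,du \;=\; -2\pi i\,\mathrm{Res}(f(\cdot,z),u_0)\,e^{-i\frac{2n\pi}{h}u_0},
\end{equation*}
and since $\Im(u_0)=-a_0$ gives $|e^{-i\frac{2n\pi}{h}u_0}| = e^{-\frac{2n\pi}{h}a_0}$, we obtain \cref{55555} with explicit constant $2\sin(\alpha\pi)$, independent of $n,T,\theta,x$. The estimate \cref{66666} is the mirror-image calculation at $u_1=v_1+ia_1$: the counterclockwise orientation, the exponent $+i\tfrac{2n\pi}{h}u$, and $\Im(u_1)=a_1$ combine to yield the decay $e^{-\frac{2n\pi}{h}a_1}$ with a constant of the same form. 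The main obstacle, and therefore the point I would write most carefully, is the contour bookkeeping: verifying that $C^{\pm}_\rho$ isolate the targeted pole from every other singularity of $f(\cdot,z)$ and from the branch cut of $\sqrt{u}$, all uniformly in the parameters. This is precisely what the stated choice $\rho=\tfrac{1}{2}\min\{\alpha^2\pi^2,a_{0,\beta},1/N_0\}$ and the lower bound on $v_0$ from \cref{eq:realM0} are designed to deliver.
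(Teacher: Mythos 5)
Your argument is correct, and it takes a genuinely different route from the paper. The paper does not evaluate the circle integrals at all: it parametrizes $u=u_0+\rho e^{i\vartheta}$, pulls out the factor $e^{-\frac{2n\pi}{h}a_0}$ from the oscillatory exponential, and then bounds the integrand on the circle by hand, using the near-pole estimate $\frac{e^{\zeta}-1}{\zeta}=1+o(|\zeta|)$ together with \cref{eq:bound_circ1}, \cref{eq:bound_circ2} and \cref{eq:bound_circ3} (the last of which is where the factor $x^{\alpha}$ emerges, via $|e^{\sqrt{u}-T}|\le (x/C)^{\alpha}e^{2\alpha\pi}$ on the circle). You instead observe that, since $u_0$ is the only singularity enclosed, the exponential is entire, and the pole is simple (here $g'(u_0)=-z/(2\alpha\sqrt{u_0})\neq 0$ because $x\ge x^*>0$), the residue theorem gives the integral exactly, $-2\pi i\,\mathrm{Res}(f(\cdot,z),u_0)\,e^{-i\frac{2n\pi}{h}u_0}$, and the branch identification $\sqrt{u_0}=T+\alpha\log\frac{x}{C}-i\alpha\pi(1-\frac{\theta}{2})$ (valid since $T+\alpha\log\frac{x}{C}\ge c_0>0$ on $\mathcal{A}^*_\theta$) collapses the residue to $-\frac{\sin(\alpha\pi)}{\pi}x^{\alpha}e^{-i\alpha\pi(1-\theta/2)}$; your pole-separation bookkeeping ($|u_k-u_0|\ge 4\alpha\pi(T+\alpha\log\frac{x}{C})$ for $k\neq 0$, which follows from factoring $u_k-u_0$, plus $v_0>M_0h\ge 24\alpha^2\pi^2\gg\rho$ to stay clear of the branch cut) is exactly what is needed and is uniform in the parameters. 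What your approach buys is an identity rather than an upper bound, with the explicit constant $2\sin(\alpha\pi)$ and uniformity in $n$, $T$, $\theta$, $x$ immediate, making the choice of $\rho$ essentially irrelevant beyond pole isolation; what the paper's estimate-on-the-circle approach buys is independence from exact residue computations, so the same mechanics transfer verbatim to the modified integrand $f_{log}$ in \cref{LemmalogA}, where the extra factor $\frac{\sqrt{u}-T}{\alpha}$ is simply carried along in the bounds. Either way the conclusion \cref{55555}, and by the mirrored computation at $u_1$ (counterclockwise orientation, $\Im(u_1)=a_1$) also \cref{66666}, is established.
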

\begin{proof}
Without loss of generality, we consider the case of $u\in C^{-}_{\rho}$, and the same argument can be developed for case $u\in C^{+}_{\rho}$.

We denote $u=u_0+\rho e^{i\vartheta}=v_0+\rho\cos\vartheta+i(\rho\sin\vartheta-a_0)$, and the integral in \cref{55555} can be rewritten as
\begin{align}\label{integral_bound_for_cir}
&\bigg{|}\int_{C^{-}_{\rho}}
f(u,z)e^{-i\frac{2n\pi}{h}u}du\bigg{|}\\
\le&e^{-\frac{2n\pi}{h}a_0}\int_{0}^{2\pi}
\big|f(u_0+\rho e^{i\vartheta},z)e^{-i\frac{2n\pi}{h}(v_0+\rho\cos{\vartheta})}
e^{\rho\sin{\vartheta}}ie^{i\vartheta}\big|\rho d\vartheta\notag\\
\le&e^{\rho-\frac{2n\pi}{h}a_0}\int_{0}^{2\pi}
\big|\rho e^{i\vartheta}f(u_0+\rho e^{i\vartheta},z)\big|d\vartheta\notag\\
\le&\frac{C^{\alpha}e^{\rho-\frac{2n\pi}{h}a_0}}{2}\int_0^{2\pi}\bigg{|}\frac{e^{\sqrt{u_0+\rho e^{i\vartheta}}-T}}{\sqrt{u_0+\rho e^{i\vartheta}}}\bigg{|}
\bigg{|}\frac{\rho e^{i\vartheta}}{e^{\frac{1}{\alpha}(\sqrt{u_0+\rho e^{i\vartheta}}-\sqrt{u_0})}-1}\bigg{|}d\vartheta\notag\\
=&\frac{\alpha C^{\alpha}e^{\rho-\frac{2n\pi}{h}a_0}}{2}\int_0^{2\pi}\bigg{|}\frac{e^{\sqrt{u_0+\rho e^{i\vartheta}}-T}(\sqrt{u_0+\rho e^{i\vartheta}}+\sqrt{u_0})}{\sqrt{u_0+\rho e^{i\vartheta}}}\bigg{|}
\bigg{|}\frac{\frac{\rho e^{i\vartheta}/\alpha}{\sqrt{u_0+\rho e^{i\vartheta}}+\sqrt{u_0}}}{e^{\frac{\rho e^{i\vartheta}/\alpha}{\sqrt{u_0+\rho e^{i\vartheta}}+\sqrt{u}}}-1}\bigg{|}d\vartheta\notag.
\end{align}

By using
$\frac{e^{\zeta}-1}{\zeta}=1+o(|\zeta|)$ as $\zeta\rightarrow0$,
we bound the last term in the integrand of the last identity as follows. Note that $\Re(\sqrt{u_0})\ge \frac{\sqrt{h}}{2}$. Then for sufficiently large $N_0$ there is a constant $C_0$ independent of $\theta$, such that
\begin{align}\label{eq:bound_circ1}
\bigg{|}\frac{\rho e^{i\vartheta}/\alpha}{\sqrt{u_0+\rho e^{i\vartheta}}+\sqrt{u_0}}\bigg{|}\bigg{/}\bigg{|}
e^{\frac{\rho e^{i\vartheta}/\alpha}{\sqrt{u_0+\rho e^{i\vartheta}}+\sqrt{u_0}}}-1\bigg{|}
\le C_0.
\end{align}

Next we estimate $\frac{\sqrt{u_0+\rho e^{i\vartheta}}+\sqrt{u_0}}{\sqrt{u_0+\rho e^{i\vartheta}}}$ from $|u_0|\ge |v_0|\ge M_0h\ge 2$ and $\rho\le\frac{1}{N_0}$:
$$
\bigg{|}\frac{u_0}{u}\bigg{|}
=\bigg{|}\frac{u_0}{u_0+\rho e^{i\vartheta}}\bigg{|}
\le\frac{|u_0|}{|u_0|-|\rho e^{i\vartheta}|}
\le\frac{|v_0|}{|v_0|-\frac{1}{N_0}}
\le\frac{2}{2-\frac{1}{N_0}}
\le 2
$$
which implies
\begin{align}\label{eq:bound_circ2}
\bigg{|}\frac{\sqrt{u_0+\rho e^{i\vartheta}}+\sqrt{u_0}}{\sqrt{u_0+\rho e^{i\vartheta}}}\bigg{|}\le 1+\sqrt{\bigg{|}\frac{u_0}{u_0+\rho e^{i\vartheta}}\bigg{|}}<3.
\end{align}

Finally, we consider $e^{\sqrt{u_0+\rho e^{i\vartheta}}-T}$: From $v_0>M_0h$ and \cref{eq:real}, and by  $v_0+\rho\ge v$, we have  with $c_0=\frac{1}{4}(2-\theta)\alpha\pi$ and
$a_0=(2-\theta)\pi\alpha\sqrt{v_0+(2-\theta)^2\alpha^2\pi^2/4}$ that
\begin{align*}
2\big{(}\sqrt{v_0+(2-\theta)^2\alpha^2\pi^2/4}+c_0\big{)}^2-v
=&2v_0+(2-\theta)^2\alpha^2\pi^2/2+2c_0^2+\frac{4c_0a_0}{(2-\theta)\alpha\pi}-v\\
\ge& v_0+(2-\theta)^2\alpha^2\pi^2/2+a_0+2c_0^2-\rho.
\end{align*}
Then by
$$
\sqrt{v^2+w^2}\le\sqrt{(v_0+\rho)^2+(a_0+\rho)^2}\le v_0+a_0+2\rho,
$$
we obtain from the definition of $\rho$ for sufficiently large $N_0$ that
$$
\sqrt{v^2+w^2}-
2\big{(}\sqrt{v_0+(2-\theta)^2\alpha^2\pi^2/4}+c_0\big{)}^2+v
\le3\rho-(2-\theta)^2\alpha^2\pi^2/2-2c_0^2\le0.
$$
which deduces
\begin{align*}
\sqrt{\frac{\sqrt{v^2+w^2}+v}{2}}
\le&\sqrt{v_0+(2-\theta)\alpha^2\pi^2/4}
+\frac{1}{4}(2-\theta)\alpha\pi,\\
\Re(\sqrt{u})-T
=&\sqrt{\frac{\sqrt{v^2+w^2}+v}{2}}-T\\
\le&\sqrt{v_0+(2-\theta)^2\alpha^2\pi^2/4}-T+\frac{1}{4}(2-\theta)\alpha\pi\\
=&\alpha\log{\frac{x}{C}}+\frac{1}{4}(2-\theta)\alpha\pi,
\end{align*}
and
\begin{align}\label{eq:bound_circ3}
\big{|}e^{\sqrt{u_0+\rho e^{i\vartheta}}-T}\big{|}=e^{\Re(u)-T}\le\left(\frac{x}{C}\right)^\alpha e^{\frac{1}{4}(2-\theta)\alpha\pi}\le\left(\frac{x}{C}\right)^\alpha e^{2\alpha\pi}.
\end{align}

Substitute \cref{eq:bound_circ1}, \cref{eq:bound_circ2} and \cref{eq:bound_circ3} into \cref{integral_bound_for_cir}, we have that
\begin{align*}
\bigg{|}\int_{C^{-}_{\rho}}
f(u,z)e^{-i\frac{2n\pi}{h}u}du\bigg{|}
=e^{-\frac{2n\pi}{h}a_0}x^\alpha \mathcal{O}(1),
\end{align*}
where the constant $\mathcal{O}(1)$ is independent of $\theta\in [0,\beta]$ and $x\in [x^*,1]$ for $z\in \mathcal{A}^*_\theta$.
\end{proof}

\bigskip
Now we turn to prove the boundedness of
\begin{align*}
\bigg{|}
\int_{h-iA_0}^{+\infty-iA_0}f(u,z)e^{-i\frac{2n\pi}{h}u}du\bigg{|}
=&e^{-\frac{2n\pi}{h} A_0}\bigg{|}\int_{h}^{+\infty}f(t-iA_0,z)
e^{-i\frac{2n\pi}{h}t}dt\bigg{|},\\
\bigg{|}
\int_{h+iA_1}^{+\infty+iA_1}f(u,z)e^{i\frac{2n\pi}{h}u}du\bigg{|}
=&e^{-\frac{2n\pi}{h} A_1}\bigg{|}\int_{h}^{+\infty}f(t+iA_1,z)
e^{i\frac{2n\pi}{h}t}dt\bigg{|}.
\end{align*}
Our strategy is to divide the integral interval $[h,+\infty)$ into three
subintervals
$$[h,v_L],\ [v_L,v_R],\ [v_R,+\infty),$$
on which the integrals of $|f(t-iA_0,z)|$ will be bounded by $x^{\alpha}\mathcal{O}(1)$ independent of $\theta\in [0,\beta]$, where the dividing points satisfies $h<v_L<v_0<v_R<+\infty$. The proof can be directly applied to the integrals of $|f(t+iA_1,z)|$.

We seek two points $u_L=v_L-iA_0$ and $u_R=v_R-iA_0$ locating on the left and right sides of $u_0-ia$, respectively, such that (see {\sc Fig}. \ref{integral_contour})
\begin{align}\label{bounds of Im(t-2ia)}
-\frac{1}{2}(5-\theta)\alpha\pi=\Im(\sqrt{u_L})
\le\Im(\sqrt{u})\le\Im(\sqrt{u_R})
=-\frac{1}{2}(3-\theta)\alpha\pi
\end{align}
for $u=t-2iA_0,\ v_L\le t\le v_R.$

The following observations are much important for the choosing of $u_L$ and $u_R$.
Denote $u=t-iA_0$ with $t\in[h,+\infty)$, then it follows
\begin{align}\label{presention of sqrt u}
\sqrt{u}=\sqrt{\frac{\sqrt{t^2+A_0^2}+t}{2}}-i\sqrt{\frac{\frac{1}{2}A_0^2}{\sqrt{t^2+A_0^2}+t}}
=:\omega-i\varpi_u,
\end{align}
which implies that both of its real part $\Re(\sqrt{u})$ and imaginary part $\Im(\sqrt{u})$ are strictly monotonically increasing with respect to $t\in[h,+\infty)$, $\Re(\sqrt{u})$ is a positive function, and $\Im(\sqrt{u})$ is a negative one. In particular, we see from \cref{eq:all_poles_fux_C} that $\Re(\sqrt{u_0})=\alpha\log{\frac{x}{C}}+T-i(2-\theta)\alpha\pi/2$.

At first, we show that
\begin{align}\label{bounds of Im(u0-ia)}
-\frac{1}{2}(5-\theta)\alpha\pi<\Im\big(\sqrt{v_0-iA_0}\big)
=\Im\big(\sqrt{u_0-ia}\big)<-\frac{1}{2}(3-\theta)\alpha\pi.
\end{align}

Set $\hat{u}=\hat{v}-iA_0$ such that $\Re(\sqrt{\hat{u}})=\Re(\sqrt{u_0})$. By noticing the definitions of $a$ and $a_0$,
it is easy to check that
$\Re(\sqrt{\hat{u}})=\Re(\sqrt{u_0})$ is equivalent to $\varpi_{\hat{u}}=\frac{1}{2}(4-\theta)\alpha\pi$
since $\Re(\sqrt{u_0})=\frac{a_0}{(2-\theta)\alpha\pi}$ and from \eqref{presention of sqrt u} $\Re(\sqrt{\hat{u}})=\frac{A_0}{2\varpi_{\hat{u}}}$.
Then we have
\begin{align}
&\Re(\sqrt{u}-\sqrt{u_0})<0\hspace{.2cm}\text{ for }\hspace{.2cm}u=t-iA_0,\ t\in[h,\hat{v}),\label{monotonicity on h-baru}\\
&\Re(\sqrt{u}-\sqrt{u_0})>0\hspace{.2cm}\text{ for }\hspace{.2cm}u=t-iA_0,\ t\in(\hat{v},+\infty],\label{monotonicity on baru-infty}
\end{align}
which together with
$\Re(\sqrt{\hat{u}})=\Re(\sqrt{u_0})<\Re(\sqrt{v_0-iA_0})$ implies that
\begin{align*}
-\frac{1}{2}(5-\theta)\alpha\pi<-\frac{1}{2}(4-\theta)\alpha\pi=\Im\big(\sqrt{\hat u}\big)<
\Im\big(\sqrt{v_0-iA_0}\big)
\end{align*}
according to the monotonicity of $\Re(\sqrt{u})$ and $\Im(\sqrt{u})$.

By some elementary arithmetic, we can verify by letting $y=\alpha\log{\frac{x}{C}}+T$ that
\begin{align}\label{ratio of a v0}
\frac{A_0}{v_0}=&\frac{4-\theta}{2}\frac{a}{v_0}
=\frac{4-\theta}{2}
\frac{2\alpha\pi(\alpha\log{\frac{x}{C}}+T)}{(\alpha\log{\frac{x}{C}}+T)^2
-\frac{1}{4}(2-\theta)^2\alpha^2\pi^2}\\
=&\frac{4-\theta}{2}\frac{2\alpha\pi y}{y^2-\frac{1}{4}(2-\theta)^2\alpha^2\pi^2}
\le1\notag
\end{align}
holds for
$y\ge(5-\frac{3\theta}{2})\alpha\pi$, and it is sufficient that
$v_0=(\alpha\log{\frac{x}{C}}+T)^2-(2-\beta)^2\alpha^2\pi^2/4>M_0h\ge 24\pi^2\alpha^2\ge2(3-\theta)(4-\theta)\alpha\pi$. From \cref{presention of sqrt u} and \cref{ratio of a v0} it follows that
\begin{align*}
&\frac{(4-\theta)^2}{4}-\frac{\Im^2(\sqrt{v_0-iA_0})}{\alpha^2\pi^2}
=\frac{(4-\theta)^2}{4}\left[1-
\frac{\sqrt{v_0^2+a^2}+v_0}{\sqrt{v_0^2+A_0^2}+v_0}\right]\\
=&\frac{(4-\theta)^2}{4}\frac{\sqrt{v_0^2+A_0^2}
-\sqrt{v_0^2+a^2}}{\sqrt{v_0^2+A_0^2}+v_0}\hspace{1.5cm}(>0)\\
=&\frac{(4-\theta)^2\left[A_0^2-a^2\right]}
{4\left[\sqrt{v_0^2+A_0^2}+v_0\right]
\left[\sqrt{v_0^2+A_0^2}+\sqrt{v_0^2+a^2}\right]}
\hspace{.2cm}\left(\frac{A_0}{v_0}\le1\right)\\
<&\frac{(4-\theta)^2}{4}\frac{\left[\frac{(4-\theta)^2}{4}-1\right]a^2}
{2(1+\sqrt{2})\frac{(4-\theta)^2}{4}a^2}
\le\frac{\frac{(4-\theta)^2}{4}-1}{4},\hspace{.5cm}(<\frac{3}{4})
\end{align*}
and then
\begin{align*}
\frac{\Im^2(\sqrt{v_0-iA_0})}{\alpha^2\pi^2}
>\frac{1}{4}\left[1+\frac{3}{4}(4-\theta)^2\right]\ge\frac{(3-\theta)^2}{4},
\end{align*}
which implies that
\begin{align*}
\Im(\sqrt{v_0-iA_0})
\le-\frac{1}{2}(3-\theta)\alpha\pi.
\end{align*}

Inspirited by \cref{bounds of Im(u0-ia)}, we choose $u_L:=v_L-iA_0$ satisfied $\Im(\sqrt{u_L})=-\frac{1}{2}(5-\theta)\alpha\pi$. Then we have
from \cref{presention of sqrt u} and $\Im(\sqrt{u_0})=-\frac{1}{2}(2-\theta)\alpha\pi$ that
\begin{align*}
-\Im(\sqrt{u_L})=\sqrt{\frac{\sqrt{v_L^2+A_0^2}-v_L}{2}}
=\frac{5-\theta}{2-\theta}\sqrt{\frac{\sqrt{v_0^2+a_0^2}-v_0}{2}}
=-\frac{5-\theta}{2-\theta}\Im(\sqrt{u_0}),
\end{align*}
and
\begin{align}\label{bound for su-su0 on h-v1}
\frac{\Re(\sqrt{u_L})}{\Re(\sqrt{u_0})}
=\frac{\sqrt{\frac{\sqrt{v_L^2+A_0^2}+v_L}{2}}}
{\sqrt{\frac{\sqrt{v_0^2+a_0^2}+v_0}{2}}}
=\frac{4-\theta}{2-\theta}\frac{\Im(\sqrt{u_0})}{\Im(\sqrt{u_L})}
=\frac{4-\theta}{5-\theta}.
\end{align}
Subsequently, we have from \cref{monotonicity on h-baru} and \cref{bound for su-su0 on h-v1} that
\begin{align}\label{real_thefirstsegment}
\Re(\sqrt{u}-\sqrt{u_0})\le
\Re(\sqrt{u_L}-\sqrt{u_0})=-\frac{\Re(\sqrt{u_0})}{5-\theta}
\le\frac{-1}{5-\theta}\le-\frac{1}{5}
\end{align}
for $u=t-iA_0$ and $t\in[h,v_L]$.

Similarly,
by choosing $u_R:=v_R-iA_0$ satisfied $\Im(\sqrt{u_R})=-\frac{1}{2}(3-\theta)\alpha\pi$, we have
\begin{align}\label{real_thesecondsegment}
\Re(\sqrt{u}-\sqrt{u_0})\ge
\Re(\sqrt{u_R}-\sqrt{u_0})=\frac{1}{3-\theta}\Re(\sqrt{u_0})
\ge\frac{1}{3-\theta}>\frac{1}{3}
\end{align}
for $u=t-iA_0$ and $t\in[v_R,+\infty)$.

Thus now we can choose $v_L$ and $v_R$ as the dividing points, which are the real parts of $u_L$ and $u_R$, respectively. Furthermore, $u_L$ and $u_R$ satisfy well the condition \cref{bounds of Im(t-2ia)}.

\bigskip
\begin{lemma}\label{infty}
Let $f(u,z)$ be defined in \eqref{eq:func} with $x\in\mathcal{A}^*_\theta$. Then
\begin{align}
\bigg{|}
\int_{h-iA_0}^{+\infty-iA_0}f(u,z)e^{-i\frac{2n\pi}{h}u}du\bigg{|}
=&e^{-\frac{2n\pi}{h}A_0}x^{\alpha}\mathcal{O}(1),\label{bound_path_h-inf}\\
\bigg{|}
\int_{h+iA_1}^{+\infty+iA_1}f(u,z)e^{i\frac{2n\pi}{h}u}du\bigg{|}
=&e^{-\frac{2n\pi}{h}A_1}x^{\alpha}\mathcal{O}(1)\label{bound_path_h-inf111}
\end{align}
hold for all $T$ and the constants in $\mathcal{O}(1)$ are  independent of $\theta\in [0,\beta]$ for $z\in\mathcal{A}^*_\theta$.
\end{lemma}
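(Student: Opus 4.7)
The plan is to reduce each integral to a modulus estimate, extract the factor $x^{\alpha}$ via the pole-location identity, and then evaluate the remaining dimensionless integral using the three-segment split $[h,v_L]\cup[v_L,v_R]\cup[v_R,+\infty)$ set up in the paragraphs leading to the statement. Since $|e^{-i2n\pi u/h}|=e^{-2n\pi A_0/h}$ along $u=t-iA_0$, the claim \eqref{bound_path_h-inf} reduces to showing $\int_h^{+\infty}|f(t-iA_0,z)|\,dt=x^{\alpha}\,\mathcal{O}(1)$ uniformly in $\theta\in[0,\beta]$, $x\in[x^*,1]$, and $T$. From the defining equation of $u_0$, namely $Ce^{(\sqrt{u_0}-T)/\alpha}=-z$, one obtains the algebraic identity $Ce^{(\sqrt{u}-T)/\alpha}+z=-z\bigl(e^{(\sqrt{u}-\sqrt{u_0})/\alpha}-1\bigr)$, which together with $\Re(\sqrt{u_0})=T+\alpha\log(x/C)=:R$ yields
\[
|f(u,z)|=\frac{\sin(\alpha\pi)\,x^{\alpha}}{2\alpha\pi\,|\sqrt{u}|}\cdot\frac{e^{\Re(\sqrt{u}-\sqrt{u_0})}}{\bigl|e^{(\sqrt{u}-\sqrt{u_0})/\alpha}-1\bigr|}.
\]
Hence $x^{\alpha}$ pops out cleanly and only the dimensionless ratio needs to be controlled. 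Writing $\zeta=(\sqrt{u}-\sqrt{u_0})/\alpha$ and $\xi=\Re(\zeta)$, the main workhorse is the identity $|e^\zeta-1|^2=(e^\xi-1)^2+4e^\xi\sin^2(\Im(\zeta)/2)$.

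Segment by segment: on $[h,v_L]$, \eqref{real_thefirstsegment} gives $\xi\le-1/(5\alpha)$, so $|e^\zeta-1|\ge 1-e^{-1/(5\alpha)}$. On $[v_L,v_R]$, the bound \eqref{bounds of Im(t-2ia)} forces $\cos\Im(\zeta)\le 0$, which upgrades the identity to $|e^\zeta-1|^2\ge e^{2\xi}+1$ and hence $|e^\zeta-1|\ge\max(e^\xi,1)$. On $[v_R,+\infty)$, \eqref{real_thesecondsegment} gives $\xi\ge 1/(3\alpha)>0$, so $|e^\zeta-1|\ge(1-e^{-1/(3\alpha)})e^\xi$ and the ratio $e^{\alpha\xi}/|e^\zeta-1|$ is dominated by $e^{-(1-\alpha)\xi}/(1-e^{-1/(3\alpha)})$.

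To finish the bookkeeping I would change variables to $\omega=\Re(\sqrt{u})$ via $t=\omega^2-A_0^2/(4\omega^2)$, which gives $dt/|\sqrt{u}|=2|\sqrt{u}|\,d\omega/\omega$. Since the starting point satisfies $\omega_h^2\ge A_0/2$, one has $|\sqrt{u}|\le\sqrt{2}\,\omega$ throughout, so $dt/|\sqrt{u}|\le 2\sqrt{2}\,d\omega$. Up to constants depending only on $\alpha$, the three sub-integrals then reduce to $\int_{\omega_h}^{\omega_L}e^{\omega-R}\,d\omega\le e^{\omega_L-R}\le e^{-R/5}$ (exponentially small in $T$), $\int_{\omega_L}^{R}e^{\omega-R}\,d\omega+\int_R^{\omega_R}e^{-(1-\alpha)(\omega-R)/\alpha}\,d\omega\le 1+\alpha/(1-\alpha)$, and $\int_{\omega_R}^{+\infty}e^{-(1-\alpha)(\omega-R)/\alpha}\,d\omega\le\alpha/(1-\alpha)$, each $\mathcal{O}(1)$ uniformly in $\theta,x,T$. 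This establishes \eqref{bound_path_h-inf}; the proof of \eqref{bound_path_h-inf111} proceeds identically with $u_0,a_0,A_0$ replaced by $u_1,a_1,A_1$ and the horizontal line reflected about the real axis.

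The main technical obstacle is maintaining uniformity of all constants in $\theta\in[0,\beta]$ and $x\in[x^*,1]$ on the middle segment, where both endpoints $v_L,v_R$ depend on $R$ and on $\theta$ through the factors $(5-\theta),(3-\theta)$. The lower bound $M_0h\ge 24\alpha^2\pi^2$ from \eqref{eq:real} together with $\theta\le\beta<2$ are precisely what keep the denominators $(1-e^{-1/(5\alpha)}),(1-e^{-1/(3\alpha)})$ away from zero and prevent the imaginary-part separation on $[v_L,v_R]$ from degrading as $\theta\to\beta$ or $x\to x^*$.
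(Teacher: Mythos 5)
Your argument is correct and follows the paper's own skeleton: the same reduction to $\int_h^{+\infty}|f(t-iA_0,z)|\,dt=x^{\alpha}\mathcal{O}(1)$, the same extraction of $x^{\alpha}$ via $Ce^{(\sqrt{u_0}-T)/\alpha}=-z$, and the same three-segment split at $v_L,v_R$ with the denominator controlled by \cref{real_thefirstsegment}, \cref{real_thesecondsegment} on the outer segments and by the imaginary-part window \cref{bounds of Im(t-2ia)} (so $\cos\Im(\zeta)\le0$, split at $\hat v$, i.e.\ at $\omega=R$) on the middle one. Where you genuinely diverge is the final bookkeeping: the paper integrates in $t$ and needs the auxiliary inequalities \cref{boundsfor_realpar_sqru-u0} and \cref{boundsfor_realpar_sqru-u0_v1-inf} (the $\tfrac29(\sqrt{t}-\sqrt{v_0})$ bounds, which is where the size condition $M_0h\ge 24\alpha^2\pi^2$ from \cref{eq:real} enters), whereas you substitute $\omega=\Re(\sqrt{u})$, for which the numerator is \emph{exactly} $e^{\omega-R}$, and absorb the Jacobian via $dt/|\sqrt{u}|=2|\sqrt{u}|\,d\omega/\omega\le 2\sqrt{2}\,d\omega$ (valid since $\omega^2\ge\frac{\sqrt{h^2+A_0^2}+h}{2}\ge A_0/2$ automatically). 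Each segment then reduces to an elementary exponential integral with constants depending only on $\alpha$, hence uniform in $\theta\in[0,\beta]$, $x\in[x^*,1]$ and $T$; this is a cleaner route that bypasses the paper's most technical estimate, while the paper's version keeps everything in the original variable $t$ at the cost of those extra inequalities. Two minor points: your parenthetical that the first-segment contribution is ``exponentially small in $T$'' is not uniform over $x\in[x^*,1]$ (for $x$ near $x^*$ one only has $R\ge c_0$, so the bound is merely $\mathcal{O}(1)$ — which is all you need), and the uniformity of the denominators $1-e^{-1/(5\alpha)}$, $1-e^{-1/(3\alpha)}$ comes from \cref{real_thefirstsegment}--\cref{real_thesecondsegment} (i.e.\ from $\Re(\sqrt{u_0})\ge1$ on $\mathcal{A}^*_\theta$) rather than from the middle-segment cosine condition, which needs only the definitions of $u_L,u_R$; neither affects the validity of the proof.
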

\begin{proof}
We only prove the case \cref{bound_path_h-inf}, and \cref{bound_path_h-inf111} can be proved in the same way.

At first, we estimate  the integrand of \cref{bound_path_h-inf} on the subinterval $[h,v_L]$
\begin{align}\label{integrand}
\big|f(u,z)\big|
=\frac{\sin{(\alpha\pi)}}{2\alpha\pi}
\frac{C^{\alpha}\big|ze^{\sqrt{u}-T}\big|}
{\big|\sqrt{u}\big|\big|Ce^{\frac{1}{\alpha}(\sqrt{u}-T)}+z\big|}
=\frac{\sin{(\alpha\pi)}}{2\alpha\pi}
\frac{\big|e^{\sqrt{u}-T}\big|}
{\big|\sqrt{u}\big|\big|e^{\frac{1}{\alpha}(\sqrt{u}-\sqrt{u_0})}-1\big|},
\end{align}
where $u=t-iA_0,t\in[h,v_L]$.

From $\sqrt{u_0}=\alpha\log{\frac{x}{C}}+T-\frac{i}{2}(2-\theta)\alpha\pi$, we have
\begin{align}\label{e^{sqrt_u-T}}
\big{|}e^{\sqrt{u}-T}\big{|}
=\big{|}e^{(\sqrt{u_0}-T)+(\sqrt{u}-\sqrt{u_0})}\big{|}
=\left(\frac{x}{C}\right)^{\alpha}e^{\Re(\sqrt{u}-\sqrt{u_0})},
\end{align}
and the exponent can be estimated by $v_0\ge A_0$ and $v_0\ge24\alpha^2\pi^2$
as follows
\begin{align}\label{boundsfor_realpar_sqru-u0}
&\Re(\sqrt{u}-\sqrt{u_0})
=\sqrt{\frac{\sqrt{t^2+A_0^2}+t}{2}}
-\sqrt{\frac{\sqrt{v^2_0+a_0^2}+v_0}{2}}\\
=&\frac{1}{2}\frac{\sqrt{t^2+A_0^2}-\sqrt{v_0^2+a_0^2}+(t-v_0)}
{\sqrt{\frac{\sqrt{t^2+A_0^2}+t}{2}}
+\sqrt{\frac{\sqrt{v_0^2+a_0^2}+v_0}{2}}}\notag\hspace{.5cm}(t-v_0<0)\notag\\
=&\frac{1}{2}\frac{(t-v_0)
\big(\frac{t+v_0}{\sqrt{t^2+A_0^2}+\sqrt{v_0^2+a_0^2}}+1\big)
+\frac{A_0^2-a_0^2}{\sqrt{t^2+A_0^2}+\sqrt{v_0^2+a_0^2}}}
{\sqrt{\frac{\sqrt{t^2+A_0^2}+t}{2}}
+\sqrt{\frac{\sqrt{v_0^2+a_0^2}+v_0}{2}}}\notag\\
\le&\frac{\frac{t-v_0}{2}
\big(\frac{t+v_0}{t+a+v_0+2a_0}+1\big)}
{\sqrt{t+\frac{A_0}{2}}+\sqrt{v_0+\frac{a_0}{2}}}
+\frac{A_0^2-a_0^2}{2(\sqrt{t}+\sqrt{v_0})
\big[\frac{a_0^2}{(2-\theta)^2\alpha^2\pi^2}+\frac{1}{4}(2-\theta)^2\alpha^2\pi^2\big]}
\notag\\
\le&\frac{t-v_0}{6(\sqrt{t}+\sqrt{v_0})}
\bigg[\frac{t+v_0}{3(t+v_0)}+1\bigg]
+\frac{\frac{4(3-\theta)}{(2-\theta)^2}a_0^2}{
\frac{2a_0^2}{(2-\theta)^2\alpha^2\pi^2}\sqrt{v_0}}\notag\\
\le&\frac{2}{9}\big(\sqrt{t}-\sqrt{v_0}\big)
+\frac{1}{2}(3-\theta)\alpha\pi<\frac{2}{9}\big(\sqrt{t}-\sqrt{v_0}\big)
+2\alpha\pi\notag
\end{align}
for $u=t-iA_0,\ t\in[h,v_0](\supseteq[h,v_L])$.

Based on the estimations \cref{real_thefirstsegment} and \cref{boundsfor_realpar_sqru-u0} and by noticing $\big|\sqrt{t-iA_0}\big|>\sqrt{t}$, the integral of $\big|f(t-iA_0,z)\big|$ on the first subinterval $[h,v_L]$ satisfies that
\begin{align}\label{bound for first interval}
\int_{h}^{v_L}\big|f(t-iA_0,z)\big|dt
\le&\frac{x^{\alpha}e^{2\alpha\pi}\sin{(\alpha\pi)}}{\big(1-e^{-\frac{1}{5\alpha}}\big)\alpha\pi}
\int_{h}^{v_L}\frac{e^{\frac{2}{9}(\sqrt{t}-\sqrt{v_0})}}{2\sqrt{t}}dt\\
=&\frac{x^{\alpha}e^{2\alpha\pi}\sin{(\alpha\pi)}}{\big(1-e^{-\frac{1}{5\alpha}}\big)\alpha\pi}
\int_{0}^{+\infty}e^{-\frac{2}{9}s}ds\notag\\
=&x^{\alpha}\mathcal{O}(1)\notag
\end{align}
holds for all $T$ and $\mathcal{O}(1)$ is independent of $\theta\in[0,\beta]$ for $z\in\mathcal{A}^*_\theta$ due to $h<v_L<v_0$.

\bigskip
In order to estimate the integral on the third subinterval $[v_R,+\infty)$, we rewrite the integrand
$|f(u,z)|$ as
\begin{align}\label{estimation of integrand}
|f(u,z)|
=&\frac{\sin{(\alpha\pi)}}{\alpha\pi}
\frac{x^{\alpha}}{2\big|\sqrt{t-iA_0}\big|}
\frac{\big|e^{\sqrt{u}-\sqrt{u_0}}\big|}
{\big|e^{\frac{1}{\alpha}(\sqrt{u}-\sqrt{u_0})}\big|
\big{|}e^{-\frac{1}{\alpha}(\sqrt{u}-\sqrt{u_0})}-1\big{|}}\\
=&\frac{\sin{(\alpha\pi)}}{\alpha\pi}
\frac{x^{\alpha}}{2\sqrt[4]{t^2+A_0^2}
e^{\frac{1}{\kappa}\Re(\sqrt{u}-\sqrt{u_0})}
\big{|}1-e^{-\frac{1}{\alpha}(\sqrt{u}-\sqrt{u_0})}\big{|}},\notag
\end{align}
where $u=t-iA_0,t\in[v_R,+\infty)$.

Analogous to \cref{boundsfor_realpar_sqru-u0}, by using
$t-v_0>0$ and $\frac{A_0^2-a_0^2}{\sqrt{t^2+A_0^2}+\sqrt{v_0^2+a_0^2}}>0$ we have for the exponent $\Re(\sqrt{u}-\sqrt{u_0})$ that
\begin{align}\label{boundsfor_realpar_sqru-u0_v1-inf}
\frac{2}{9}(\sqrt{t}-\sqrt{v_0})\le\Re(\sqrt{u}-\sqrt{u_0}),
\end{align}
where $u=t-iA_0,\ t\in[v_0,+\infty)\left(\supseteq[v_R,+\infty)\right)$.

With the bounds \cref{real_thesecondsegment} and \cref{boundsfor_realpar_sqru-u0_v1-inf} in hand, we have
\begin{align*}
\big{|}1-e^{-\frac{1}{\alpha}(\sqrt{u}-\sqrt{u_0})}\big{|}
&\ge 1-e^{-\frac{1}{\alpha}\Re(\sqrt{u}-\sqrt{u_0})}
\ge  1-e^{-\frac{1}{3\alpha}},
\end{align*}
and
\begin{align}\label{bound for third interval}
\int_{v_R}^{+\infty}\big|f(t-iA_0,z)\big|dt
\le&\frac{x^{\alpha}\sin{(\alpha\pi)}}
{\big(1-e^{-\frac{1}{3\alpha}}\big)\alpha\pi}
\int_{v_R}^{+\infty}
\frac{e^{-\frac{2}{9\kappa}(\sqrt{t}-\sqrt{v_0})}}{2\sqrt{t}}dt\notag\\
\le&\frac{x^{\alpha}\sin{(\alpha\pi)}}
{\big(1-e^{-\frac{1}{3\alpha}}\big)\alpha\pi}
\int_{0}^{+\infty}
e^{-\frac{2}{9\kappa}s}ds\\
=&x^{\alpha}\mathcal{O}(1)\notag
\end{align}
holds for all $T$ and is independent of $\theta\in[0,\beta]$ for $z\in\mathcal{A}^*_\theta$ by $v_0<v_R$.

\bigskip
Now, we turn to the middle subinterval $[v_L,v_R]$. Since $\Im(\sqrt{u_0})=-\frac{1}{2}(2-\theta)\alpha\pi$, it is easy to check by \cref{bounds of Im(t-2ia)}, \cref{real_thefirstsegment} and \cref{real_thesecondsegment} that
\begin{align*}
-\frac{3\pi}{2}\le\frac{1}{\alpha}\Im(\sqrt{u}-\sqrt{u_0})
\le-\frac{\pi}{2},
\hspace{0.5cm}
-\frac{1}{5}\le\Re(\sqrt{u}-\sqrt{u_0})\le\frac{1}{3}
\end{align*}
hold for $u=t-iA_0,t\in[v_L,v_R]$,
which implies that
\begin{align}\label{bound for sqrt-sign1}
\big|e^{\frac{1}{\alpha}(\sqrt{u}-\sqrt{u_0})}-1\big|
=&\sqrt{e^{\frac{2}{\alpha}\Re(\sqrt{u}-\sqrt{u_0})}
-2e^{\frac{1}{\alpha}\Re(\sqrt{u}-\sqrt{u_0})}
\cos{\bigg(\frac{\varpi_d}{\alpha}\bigg)}+1}\\
\ge&\sqrt{1+e^{\frac{2}{\alpha}\Re(\sqrt{u}-\sqrt{u_0})}}
\ge\sqrt{1+e^{-\frac{2}{5\alpha}}}>1,\notag
\end{align}
for $u=t-iA_0,\ t\in[v_L,\hat v]$,
where $\varpi_d:=\Im(\sqrt{u}-\sqrt{u_0})$.
Similarly, we have
\begin{align}\label{bound for sqrt-sign2}
\big|1-e^{-\frac{1}{\alpha}(\sqrt{u}-\sqrt{u_0})}\big|
\ge\sqrt{1+e^{-\frac{2}{3\alpha}}}>1
\end{align}
for $u=t-iA_0,\ t\in[\hat v,v_R]$.

Furthermore,
we have according to \cref{monotonicity on h-baru} and \cref{monotonicity on baru-infty}
that
\begin{align}\label{bounds middle interval}
&\int_{v_L}^{v_R}\big|f(t-iA_0,z)\big|dt
=\bigg(\int_{v_L}^{\hat v}+\int_{\hat v}^{v_R}\bigg)\big|f(t-iA_0,z)\big|dt\\
=&\frac{\sin{\alpha\pi}}{\alpha\pi}
\int_{v_L}^{\hat v}\frac{x^{\alpha}e^{\Re(\sqrt{u}-\sqrt{u_0})}}
{2\big|\sqrt{u}\big|\big|e^{\frac{1}{\alpha}(\sqrt{u}-\sqrt{u_0})}-1\big|}dt\notag\\
&+\frac{\sin{\alpha\pi}}{\alpha\pi}
\int_{\hat v}^{v_R}\frac{x^{\alpha}e^{-\frac{1}{\kappa}\Re(\sqrt{u}-\sqrt{u_0})}}
{2\big|\sqrt{u}\big|
\big|1-e^{-\frac{1}{\alpha}\Re(\sqrt{u}-\sqrt{u_0})}\big|}dt
\notag\\
\le&\frac{x^{\alpha}e^{2\alpha\pi}\sin{(\alpha\pi)}}{\alpha\pi}
\int_{v_L}^{\hat v}\frac{e^{\frac{2}{9}(\sqrt{t}-\sqrt{v_0})}}
{2\sqrt{t}}dt
\hspace{1.2cm}(\text{by \cref{boundsfor_realpar_sqru-u0}, \cref{bound for sqrt-sign1}})\notag\\
&+\frac{x^{\alpha}\sin{(\alpha\pi)}}
{\alpha\pi}
\int_{\hat v}^{v_R}\frac{e^{-\frac{2}{9\kappa}(\sqrt{t}-\sqrt{v_0})}}
{2\sqrt{t}}dt\notag\hspace{.9cm}(\text{by \cref{boundsfor_realpar_sqru-u0_v1-inf}, \cref{bound for sqrt-sign2}})\\
=&x^{\alpha}\mathcal{O}(1)\notag
\end{align}
holds for all $T$ and $\mathcal{O}(1)$ is independent of $\theta\in[0,\beta]$ for $z\in\mathcal{A}^*_\theta$.

Adding \cref{bound for first interval}, \cref{bound for third interval} and \cref{bounds middle interval} all up, we prove the case of
\begin{align*}
\bigg{|}
\int_{h-iA_0}^{+\infty-iA_0}f(u,z)e^{-i\frac{2n\pi}{h}u}du\bigg{|}
=&e^{-\frac{2n\pi}{h}A_0}\bigg{|}\int_{h}^{+\infty}f(t-iA_0,z)e^{- i\frac{2n\pi}{h}t}dt\bigg{|}\\
\le&e^{-\frac{2n\pi}{h}A_0}\int_{h}^{+\infty}\big{|}f(t-iA_0,z)\big{|}dt\\
=&e^{-\frac{2n\pi}{h}A_0}x^{\alpha}\mathcal{O}(1).
\end{align*}
\end{proof}

\begin{lemma}\label{LemmalogA}
Let $f_{log}(u,z)=\frac{1}{\alpha}(\sqrt{u}-T)f(u,z)$  with $z\in \mathcal{A}^*_\theta$ and the conditions of Lemmas \ref{la3}, \ref{lemma_inte_circ} and \ref{infty} hold, respectively. Then we have that
\begin{align}\label{eq:line}
&\bigg{|}\int_{0}^{A_1}f_{log}(h+it,z)
e^{-\frac{2n\pi}{h}t}dt-\int_{0}^{A_0}f_{log}(h-it,z)
e^{-\frac{2n\pi}{h}t}dt\bigg{|}
=\mathcal{O}(Te^{-T}),
\end{align}
\begin{align}
\bigg{|}\int_{C^{-}_{\rho}}
f_{log}(u,z)e^{-i\frac{2n\pi}{h}u}du\bigg{|}
=&e^{-\frac{2n\pi}{h}a_0}x^{\alpha}\mathcal{O}(1),\label{eq:circle}\\
\bigg{|}
\int_{C^{+}_{\rho}}
f_{log}(u,z)e^{i\frac{2n\pi}{h}u}du\bigg{|}
=&e^{-\frac{2n\pi}{h}a_1}x^{\alpha}\mathcal{O}(1),\label{eq:circle1}
\end{align}
and
\begin{align}
\bigg{|}
\int_{h-iA_0}^{+\infty-iA_0}f_{log}(u,z)e^{-i\frac{2n\pi}{h}u}du\bigg{|}
=&Te^{-\frac{2n\pi}{h}A_0}x^{\alpha}\mathcal{O}(1)
=e^{-\frac{2n\pi}{h}a_0}x^{\alpha}\mathcal{O}(1),\label{bound_path_h-inflog}\\
\bigg{|}
\int_{h+iA_1}^{+\infty+iA_1}f_{log}(u,z)e^{i\frac{2n\pi}{h}u}du\bigg{|}
=&Te^{-\frac{2n\pi}{h}A_1}x^{\alpha}\mathcal{O}(1)
=e^{-\frac{2n\pi}{h}a_1}x^{\alpha}\mathcal{O}(1)\label{bound_path_h-inf111log}
\end{align}
%
%
%
hold for sufficiently large $T$ and the constants in $\mathcal{O}$ are
independent of $n$, $T$, $\theta\in [0,\beta]$ and $x\in [x^*,1]$.
\end{lemma}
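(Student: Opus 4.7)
\textbf{Proof sketch for \cref{LemmalogA}.} The plan is to retrace the arguments of \cref{la3}, \cref{lemma_inte_circ} and \cref{infty} with $f(u,z)$ replaced by $f_{log}(u,z)=\frac{1}{\alpha}(\sqrt{u}-T)f(u,z)$, and to control the extra factor $\frac{1}{\alpha}(\sqrt{u}-T)$ contour by contour. On every path the bound for $|f_{log}|$ equals the bound for $|f|$ established in the reference lemma multiplied by $\frac{1}{\alpha}|\sqrt{u}-T|$.

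I would first bound $|\sqrt{u}-T|$ on the three contour types. Since $A_0,A_1=\mathcal{O}(T)$, on the vertical segments $u=h\pm it$ with $t\in[0,A_0]\cup[0,A_1]$ we have $|\sqrt{h\pm it}-T|\le T+\sqrt{h+A_j}=\mathcal{O}(T)$. On the small circles $C_{\rho}^{\pm}$, parametrizing $u=u_{0,1}+\rho e^{i\vartheta}$ and using the identity $\sqrt{u_0}-T=\alpha\log(x/C)-i(2-\theta)\alpha\pi/2$ (and its counterpart at $u_1$), a Taylor expansion gives $|\sqrt{u}-T|\le|\sqrt{u_{0,1}}-T|+\mathcal{O}(\rho/\sqrt{|u_{0,1}|})=\mathcal{O}(T)$. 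On the horizontal half-lines $h\pm iA_j\to+\infty\pm iA_j$, we use $|\sqrt{t\pm iA_j}-T|\le T+\sqrt{t+A_j}=\mathcal{O}(T+\sqrt{t})$.

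Inserting these bounds into the proofs of the three reference lemmas would yield the desired estimates for $f_{log}$. The $\mathcal{O}(T)$ multiplier upgrades the $\mathcal{O}(e^{-T})$ bound of \cref{la3} to $\mathcal{O}(Te^{-T})$, giving \cref{eq:line}. On the horizontal half-lines the $\sqrt{t}$ growth in $|\sqrt{u}-T|$ is absorbed because each of the three subintervals $[h,v_L]$, $[v_L,v_R]$, $[v_R,+\infty)$ employed in \cref{infty} already carries a strictly decreasing factor $e^{-\frac{2}{9}(\sqrt{t}-\sqrt{v_0})}$ or $e^{-\frac{2}{9\kappa}(\sqrt{t}-\sqrt{v_0})}$, and $\int_0^{\infty}\sqrt{s}\,e^{-cs}\,ds<\infty$ for any $c>0$. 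This directly gives the first equalities $Te^{-\frac{2n\pi}{h}A_{0,1}}x^{\alpha}\mathcal{O}(1)$ in \cref{bound_path_h-inflog} and \cref{bound_path_h-inf111log}, as well as provisional versions of \cref{eq:circle} and \cref{eq:circle1} with an $\mathcal{O}(T)$ prefactor.

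The final step, and the main obstacle I anticipate, is to eliminate the spurious $T$ prefactor in the circle and horizontal bounds. I would use the splitting $A_{0,1}=a_{0,1}+a$ with $a=2\pi\alpha(T+\alpha\log(x/C))\ge 2\pi\alpha c_0>0$ for $x\in[x^*,1]$, which lets me write $Te^{-\frac{2n\pi}{h}A_{0,1}}=e^{-\frac{2n\pi}{h}a_{0,1}}\cdot Te^{-\frac{2n\pi}{h}a}$ and absorb $Te^{-\frac{2n\pi}{h}a}$ into an $\mathcal{O}(1)$ constant uniformly in $n\ge 1$ for sufficiently large $T$; this produces the second equalities in \cref{bound_path_h-inflog}--\cref{bound_path_h-inf111log}. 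For \cref{eq:circle}--\cref{eq:circle1} the cleanest route is Cauchy's residue theorem directly: the contour $C_\rho^{\pm}$ encloses only the simple pole $u_{0,1}$ of $f(u,z)$, so the circle integral equals $\mp 2\pi i\cdot\frac{\sqrt{u_{0,1}}-T}{\alpha}\cdot{\rm Res}\!\left(f(u,z)e^{\mp i\frac{2n\pi}{h}u},u_{0,1}\right)$, and the residue already has magnitude $x^{\alpha}e^{-\frac{2n\pi}{h}a_{0,1}}\mathcal{O}(1)$ by the computation contained in the proof of \cref{lemma_inte_circ}; the residual factor $\frac{1}{\alpha}|\sqrt{u_{0,1}}-T|=\mathcal{O}(T)$ is then absorbed into $e^{-\frac{2n\pi}{h}a_{0,1}}$ by the same splitting trick, completing the proof.
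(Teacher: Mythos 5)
Your overall skeleton—rerun \cref{la3}, \cref{lemma_inte_circ} and \cref{infty} with the extra analytic factor $\frac{1}{\alpha}(\sqrt{u}-T)$ and control its size contour by contour—is the same as the paper's, and your handling of \cref{eq:line} and of the first equalities in \cref{bound_path_h-inflog}--\cref{bound_path_h-inf111log} is fine: the extra $\mathcal{O}(T+\sqrt{t})$ is harmless against the decaying exponentials. The paper implements this slightly differently: it writes $f_{log}(u,z)=\frac{\sin(\alpha\pi)}{2\alpha^2\pi}\frac{zC^{\alpha}e^{\sqrt{u}-T}}{Ce^{\frac{1}{\alpha}(\sqrt{u}-T)}+z}-\frac{T}{\alpha}f(u,z)$, i.e.\ $\frac{\sqrt{u}}{\alpha}f-\frac{T}{\alpha}f$, so the first piece is just $f$ with the factor $\frac{1}{2\sqrt{u}}$ removed and the factor $T$ enters only linearly through the second piece; and on the circles $C^{\pm}_{\rho}$ it invokes $\sqrt{u}-T=\frac{u-T^{2}}{\sqrt{u}+T}=\mathcal{O}(1)$, so that no $T$-prefactor is produced there at all. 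Your residue-theorem evaluation of the circle integrals is a legitimate alternative to the paper's direct estimate of the parametrized integral.

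The genuine gap is your final absorption step. You claim $Te^{-\frac{2n\pi}{h}a}=\mathcal{O}(1)$ uniformly in $n\ge 1$ and $x\in[x^*,1]$ for large $T$, using only $a\ge 2\pi\alpha c_0>0$; but $c_0$ is a constant independent of $T$, and at the left endpoint $x=x^*=Ce^{\frac{1}{\alpha}(c_0-T)}$ one has $T+\alpha\log\frac{x}{C}=c_0$ exactly, so $a=2\pi\alpha c_0$ and $a_0=(2-\theta)\alpha\pi c_0$ are fixed constants; with $n=1$ the quantity $Te^{-\frac{2\pi}{h}a}$ grows linearly in $T$ and cannot be hidden in a constant that the lemma requires to be independent of $T$ and $x$. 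The same objection defeats absorbing $\frac{1}{\alpha}|\sqrt{u_{0,1}}-T|$ into $e^{-\frac{2n\pi}{h}a_{0,1}}$ in \cref{eq:circle}--\cref{eq:circle1}: near $x^*$ one has $|\sqrt{u_0}-T|=\big|\alpha\log\frac{x}{C}-\tfrac{i}{2}(2-\theta)\alpha\pi\big|\approx T-c_0$ while $a_0$ stays bounded, so giving up any fixed fraction of the exponent does not control the factor $T$. What your argument is missing is the paper's observation that on the circles the factor $\sqrt{u}-T$ itself should be bounded rather than estimated by $\mathcal{O}(T)$ (and, for the straight-line pieces, the decomposition that isolates the $T$-dependence in the term $\frac{T}{\alpha}f$); alternatively, in the regime $x$ close to $x^*$ where the exponents $a,a_0$ stop growing, you must use that $x^{\alpha}\le C^{\alpha}e^{c_0-T}$ already dominates any polynomial factor in $T$—but that is an additional argument, not contained in your absorption claim as stated.
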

\begin{proof}
For the proof of \cref{eq:line},  \cref{bound_path_h-inflog} and \cref{bound_path_h-inf111log}, we rewrite $f_{log}$ as
\begin{align*}
f_{log}(u,z)=\frac{\sin{(\alpha\pi)}}{2\alpha^2\pi}\frac{zC^{\alpha}e^{\sqrt{u}-T}}
{Ce^{\frac{1}{\alpha}(\sqrt{u}-T)}+z}-\frac{1}{\alpha}Tf(u,z),
\end{align*}
which, together with \cref{eq:dom1}, \cref{eq:dom2} and \Cref{la3}, yields
\begin{align*}
&\bigg{|}\int_{0}^{A_1}f_{log}(h+it,z)
e^{-\frac{2n\pi}{h}t}dt-\int_{0}^{A_0}f_{log}(h-it,z)
e^{-\frac{2n\pi}{h}t}dt\bigg{|}\\
\le&\mathcal{O}(e^{-T})\left\{\int_{0}^{A_1}
+\int_{0}^{A_0}\right\}e^{\sqrt{t}+\sqrt{h}}
e^{-\frac{2n\pi}{h}t}dt+\mathcal{O}(Te^{-T})\\
=&\mathcal{O}(e^{-T})
\int_0^{+\infty}e^{\sqrt{t}+\sqrt{h}}e^{-\frac{2n\pi}{h}t}dt+\mathcal{O}(Te^{-T})
\end{align*}
and then leads to the desired result \cref{eq:line}.

Estimates \cref{bound_path_h-inflog} and \cref{bound_path_h-inf111log} follow from  \cref{infty} and a similar proof without the integrand $\frac{1}{2\sqrt{t}}$ and multiplied by a factor $\frac{1}{\alpha}$ in
\cref{bound for first interval}, \cref{bound for third interval} and \cref{bounds middle interval}  on the integral of $\frac{\sin{(\alpha\pi)}}{2\alpha^2\pi}\frac{zC^{\alpha}e^{\sqrt{u}-T}}
{Ce^{\frac{1}{\alpha}(\sqrt{u}-T)}+z}$, respectively.

Finally, it easily to check by \cref{eq:real_imag_part_v0a0} and \cref{eq:real_imag_part_v1a1} that
\begin{align*}
\sqrt{u}-T=\frac{u-T^2}{\sqrt{u}+T}=\mathcal{O}(1)
\end{align*}
for $u=u_l+\rho e^{i\vartheta},\ \vartheta:0\rightarrow\pm2\pi,\ l=0,1$, as $T$ approaches infinity. Then by $f_{log}(u,z)=\frac{\sin{(\alpha\pi)}}{2\alpha^2\pi}\frac{\sqrt{u}-T}{2\sqrt{u}}\frac{zC^{\alpha}e^{\sqrt{u}-T}}
{Ce^{\frac{1}{\alpha}(\sqrt{u}-T)}+z}$, with the same argument of \cref{lemma_inte_circ} we arrive at \cref{eq:circle} and \cref{eq:circle1}.


\begin{remark}
It is worthy of noticing that all the constants in $\mathcal{O}$ of the statements in Lemmas \ref{la3}-\ref{LemmalogA} are independent of $\theta\in [0,\beta]$ and $x\in [x^*,1]$ for $z\in\mathcal{A}^*_\theta$, which is important for deriving the uniform convergence rates of quadrature errors of $I(z)$ and $I_{\log}$ for $z\in S_\beta$.
\end{remark}

\end{proof}

\end{appendix}

\section*{Acknowledgement}
The authors would like to thank Guidong Liu, Yanghao Wu and Yuee Zhong  for their helpful discussions.

\bibliographystyle{siamplain}
\bibliography{references}
\end{document}


\maketitle

\section{A detailed example}

Here we include some equations and theorem-like environments to show
how these are labeled in a supplement and can be referenced from the
main text.
Consider the following equation:
\begin{equation}
  \label{eq:suppa}
  a^2 + b^2 = c^2.
\end{equation}
You can also reference equations such as \cref{eq:matrices,eq:bb} 
from the main article in this supplement.

\lipsum[100-101]

\begin{theorem}
An example theorem.
\end{theorem}

\lipsum[102]
 
\begin{lemma}
An example lemma.
\end{lemma}

\lipsum[103-105]

Here is an example citation: \cite{KoMa14}.

\section[Proof of Thm]{Proof of \cref{thm:bigthm}}
\label{sec:proof}

\lipsum[106-112]

\section{Additional experimental results}
\Cref{tab:foo} shows additional
supporting evidence. 

\begin{table}[htbp]
\footnotesize
  \caption{Example table.}  \label{tab:smfoo}
\begin{center}
  \begin{tabular}{|c|c|c|} \hline
   Species & \bf Mean & \bf Std.~Dev. \\ \hline
    1 & 3.4 & 1.2 \\
    2 & 5.4 & 0.6 \\ \hline
  \end{tabular}
\end{center}
\end{table}

\bibliographystyle{siamplain}
\bibliography{references}